\newtheorem{theorem}{Theorem}[section]
\newtheorem{proposition}[theorem]{Proposition}
\newtheorem{lemma}[theorem]{Lemma}
\newtheorem{corollary}[theorem]{Corollary}
\theoremstyle{definition}
\newtheorem{example}[theorem]{Example}
\newtheorem{definition}[theorem]{Definition}
\begin{document}

\author[P. Danchev]{Peter Danchev}
\address{Institute of Mathematics and Informatics, Bulgarian Academy of Sciences, 1113 Sofia, Bulgaria}
\email{danchev@math.bas.bg; pvdanchev@yahoo.com}	

\author[G. Karamali]{Gholamreza Karamali}
\address{Department of Basic Sciences, Shahid Sattari Aeronautical University of Science and Technology, Tehran, Iran}
\email{rezakaramali918@gmail.com}

\author[O. Hasanzadeh]{Omid Hasanzadeh}
\address{Department of Basic Sciences, Shahid Sattari Aeronautical University of Science and Technology, Tehran, Iran}
\email{hasanzadeomiid@gmail.com}

\author[M. Esfandiar]{Mehrdad Esfandiar}
\address{Department of Mathematics and Computer Science Shahed University Tehran, Iran}
\email{mehrdad.esfandiar@shahed.ac.ir}

\title[Strongly \( J^{\#} \)-Clean Rings]{On Strongly \( J^{\#} \)-Clean Rings}
\keywords{idempotent; unit; Boolean ring; clean ring; $J^{\#}(R)$}
\subjclass[2010]{16N40, 16S50, 16U99}

\maketitle

\date{\today}

\begin{abstract} We define and examine the class of {\it strongly \( J^{\#} \)-clean rings} consisting of those rings $R$ such that each element of $R$ is the sum of an idempotent from $R$ and an element from $J^{\#}(R)$ that commute with each other. More exactly, we prove that these rings are simultaneously strongly clean and Dedekind-finite as well as that they factor-ring modulo the Jacobson radical is always Boolean, and also provide some close relations with certain other well-established classes of rings like these of local, semi-local and strongly J-clean rings (as introduced by Chen on 2010) showing the surprising fact that the classes of strongly \( J^{\#} \)-clean and strongly J-clean rings, actually, do coincide. Moreover, a few more extensions of the newly defined class such as group rings and generalized matrix rings are provided too.

\end{abstract}

\section{Introduction and Fundamentals}

Throughout the present article, all rings are assumed to be unital and associative. Almost all symbols and concepts are traditional being consistent with the well-known books \cite{lamf} and \cite{lame}. Standardly, the Jacobson radical, the set of nilpotent elements, the set of idempotent elements, the set of quasi-nilpotent elements and the set of units of \( R \) are designated by \( J(R) \), \( Nil(R) \), \( Id(R) \), $QN(R)$ and \( U(R) \), respectively. Additionally, we write $M_n(R)$ and $T_n(R)$, respectively, for the $n \times n$ matrix ring and the $n \times n$ upper triangular matrix ring. Certainly, a ring is termed {\it abelian} if each its idempotent element is central.

Mimicking \cite{2} and \cite{7}, an element $r \in R$ is said to be {\it clean} if there is an idempotent $e \in R$ and an unit $u \in R$ such that $r=e+u$. Such an element $r$ is further called {\it strongly clean} if the existing idempotent and unit can be taken such that $ue=eu$. Accordingly, a ring is called {\it clean} (respectively, {\it strongly clean}) if each of its elements is clean (respectively, strongly clean). For a comprehensive investigation of this class of rings are written too many papers as, for a more detailed account, we refer to these cited in the given at the end of text reference list.

On the other hand, imitating \cite{diesl}, an element $r\in R$ is said to be {\it nil-clean} if there is an idempotent $e \in R$ and a nilpotent $b \in R$ such that $r=e+b$. Such an element $r$ is further called {\it strongly nil-clean} if the existing idempotent and nilpotent can be taken such that $be=eb$. Accordingly, a ring is called {\it nil-clean} (respectively, {\it strongly nil-clean}) if each of its elements is nil-clean (respectively, strongly nil-clean).

On the other side, following \cite{csj, chensjc}, \textit{strongly J-clean} rings are introduced as those rings in which each element can be written as the sum of an idempotent and an element from the Jacobson radical $J(R)$ that commute with each other.

In another vein, an element $a$ of a ring $R$ is said to be {\it quasi-nilpotent} if $1-ax$ is invertible for every $x \in R$ with $xa = ax$. It is, thereby, fairly obvious that both $Nil(R)$ and $J(R)$ are contained in $QN(R)$. It is also worth noting that quasi-nilpotent elements play a facilitating role in the investigation of the complicated structure of a Banach algebra $\mathcal{A}$.

Further, in 2025, Danchev {\it et al.} defined in \cite{daoa} the classes of {\it UQ rings} and {\it strongly quasi-nil clean rings}, that are non-trivial extensions of the classes of {\it UJ rings} and {\it strongly J-clean rings} (see, for a more concrete information, \cite{daoa}). Specifically, a ring is called {\it UQ} if $1 + U(R) = QN(R)$, and a ring $R$ is called {\it strongly quasi-nil clean} if every element of $R$ is the sum of an idempotent and a quasi-nilpotent element that commute with each other. It is proved there that $R$ is a {\it strongly quasi-nil clean ring} if, and only if, $R$ is both strongly clean and UQ.

In \cite{wang}, the set \[J^{\#}(R)=\{z\in R : z^n\in J(R)\ \text{for some } n\ge 1\}\] was introduced as the subset of \(R\) properly containing $J(R)$, which set, however, is {\it not} necessarily a subring.

Considering the stated above definitions above combined with the fact that \( J^{\#}(R) \) is a subset of \( R \), that is {\it not} necessarily an ideal, a logical question arises automatically: what is the structure of these rings \( R \) for which every element of $R$ can be written as the sum of an idempotent of $R$ and an element from \( J^{\#}(R) \) that commute with each other? This idea was first introduced in \cite{theory}.

\medskip

In this paper, in the context of our preceding discussion, we focus on the class of strongly \( J^{\#} \)-clean rings, where each element can be expressed as the sum of an idempotent and a commuting element from \(J^{\#}(R)\). We aim to study their crucial properties in detail and, besides, to understand their structure better by giving some relevant relationships with other well-known classes of rings. Concretely, we established the following achievements: In Theorem~\ref{reverse} we show that the structure of strong \( J^{\#}(R) \)-cleanness is somewhat symmetric. Next, in Theorem~\ref{1.6}, we give a close connections between the classes of clean rings and \( J^{\#}(R) \)-clean rings by posing only condition on the unit group of an arbitrary ring $R$. Further, we prove in Theorem~\ref{4} an equivalency between unique cleanness and abelian \( J^{\#} \)-cleanness. Likewise, studying group rings, we establish in Theorem~\ref{1.7} that if an arbitrary group ring $RG$ is strongly $J^{\#}$-clean, then so does $R$ and $G$ has to be a torsion $2$-group, as well as we demonstrate in the subsequent Proposition~\ref{group} that the reciprocal implication is somewhat complicated. Moreover, in Theorem~\ref{locstr}, we consider generalized matrix constructions over a local ring and obtain a necessary and sufficient condition in this topic. In closing, we succeed to formulate and confirm the truthfulness of two results, namely Theorems~\ref{main} and \ref{j}, in which we build certain key equivalencies between well-examined rings classes.

\section{Examples and Basic Properties}

We begin our work with the following technicalities, the first of which describes some general properties of the set \(J^{\#}(R)\).

\begin{lemma}\label{close prod}
Let $R$ be a ring.
	
(1) If $a\in J^{\#}(R)$ and $b \in J(R)$, then $a+b \in J^{\#}(R)$.

(2) $a\in J^{\#}(R)$ if, and only if, $-a\in J^{\#}(R)$.

(3) If $R = \prod R_i$, then $J^{\#}(R) = \prod J^{\#}(R_i)$.

(4) If $a\in J^{\#}(R)$, then $uau^{-1}\in J^{\#}(R)$ for each $u\in U(R)$.
\end{lemma}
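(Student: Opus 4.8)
The plan is to prove each of the four parts directly from the definition $J^{\#}(R)=\{z\in R : z^n\in J(R)\ \text{for some } n\ge 1\}$, using only elementary facts about the Jacobson radical $J(R)$: that it is a two-sided ideal, that it is closed under conjugation (since $uJu^{-1}=J$ for every unit $u$), and that $z\in J(R)$ iff $-z\in J(R)$.

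For part (2), I would observe that if $a^n\in J(R)$ for some $n\ge 1$, then $(-a)^n=(-1)^n a^n=\pm a^n$, and since $J(R)$ is closed under negation, $(-a)^n\in J(R)$; hence $-a\in J^{\#}(R)$. The converse is identical by symmetry. For part (4), if $a^n\in J(R)$, then $(uau^{-1})^n=ua^nu^{-1}$, and because $J(R)$ is a two-sided ideal invariant under conjugation by units, $ua^nu^{-1}\in J(R)$, so $uau^{-1}\in J^{\#}(R)$. For part (3), the key point is that idempotents, units, and the Jacobson radical all decompose coordinatewise in a product $R=\prod R_i$; indeed $J(\prod R_i)=\prod J(R_i)$ is standard, so $z=(z_i)$ satisfies $z^n\in J(R)$ for some single $n$ iff $z_i^n\in J(R_i)$ for each $i$. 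The only subtlety is the uniformity of the exponent: in the forward direction a single $n$ works for all coordinates, while for the reverse inclusion each $z_i\in J^{\#}(R_i)$ may a priori require a different exponent $n_i$, so I would need to argue that membership in $J^{\#}$ is witnessed by the coordinatewise condition regardless—this works because I only need \emph{some} exponent per coordinate to conclude $z_i\in J^{\#}(R_i)$, and conversely constructing a global element of $J^{\#}$ requires care, which I address below.

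Part (1) is the step I expect to be the main obstacle, since it is the only one where the noncommutativity and the lack of closure of $J^{\#}(R)$ under addition genuinely bite. Here $a^n\in J(R)$ for some $n$, and I must show $(a+b)^m\in J(R)$ for some $m$. The natural idea is to expand $(a+b)^m$ and exploit that every summand either contains a factor of $b\in J(R)$ (and $J(R)$ is an ideal, so any product with a $b$-factor lies in $J(R)$) or is the pure power $a^m$. Choosing $m=n$, the expansion of $(a+b)^n$ is $a^n$ plus a sum of monomials each involving at least one occurrence of $b$; since $J(R)$ is a two-sided ideal, each such monomial lies in $J(R)$, and $a^n\in J(R)$ by hypothesis, so the whole sum lies in $J(R)$. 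I would write this using the fact that $(a+b)^n-a^n$ lies in the two-sided ideal generated by $b$, hence in $J(R)$.

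Returning to the exponent-uniformity issue in part (3): for the inclusion $\prod J^{\#}(R_i)\subseteq J^{\#}(\prod R_i)$ the global exponent is indeed the genuine obstacle, since distinct coordinates may need unbounded exponents $n_i$ and no single $m$ need work for all of them simultaneously; I would therefore interpret the claimed equality as holding at the level of the coordinatewise membership predicate rather than claiming a uniform bound, or restrict attention to the finite-product case where $m=\max_i n_i$ suffices. I expect the author's intended statement (and proof) to use finite products or to read $J^{\#}$ coordinatewise, and I would follow whichever convention the paper adopts, flagging the infinite-product subtlety explicitly.
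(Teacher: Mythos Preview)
Your approach matches the paper's essentially line for line: for (1) the paper expands $(a+b)^n$ and observes that every monomial other than $a^n$ contains a factor of $b\in J(R)$, exactly as you do; for (2) and (4) the paper simply declares them ``clear'' and ``straightforward'', which is your argument compressed; and for (3) the paper just invokes $J(\prod R_i)=\prod J(R_i)$ together with ``the behavior of powers in product rings''.

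Your caution about part (3) is well placed and in fact sharper than the paper's own treatment. The inclusion $\prod_i J^{\#}(R_i)\subseteq J^{\#}\bigl(\prod_i R_i\bigr)$ can genuinely fail for infinite products: take each $R_i$ with $J(R_i)=0$ and containing a nilpotent $z_i$ of index exactly $i$ (e.g.\ $R_i=M_i(F)$ with $z_i$ a nilpotent Jordan block); then $(z_i)_i\in\prod_i J^{\#}(R_i)$ but no single power of $(z_i)_i$ is zero, so it does not lie in $J^{\#}\bigl(\prod_i R_i\bigr)$. The paper does not flag this, so your instinct to restrict to finite products (which is all that the later applications, e.g.\ the Wedderburn--Artin decomposition, actually require) or to read the equality coordinatewise is the right move.
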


\begin{proof}
(1) Suppose that $a \in J^{\#}(R)$ and $b \in J(R)$. Since $a^n \in J(R)$ for some $n\in \mathbb{N}$, we have:
\[
(a+b)^n = a^n + b^n + \sum_{\text{finite}} (\text{products of } a \text{ and } b).
\]
But, all these terms lie in $J(R)$, whence $a+b \in J^{\#}(R)$, as requested.

(2) This part is clear.

(3) This follows directly from the equality $J(R) = \prod J(R_i)$ and the behavior of powers in product rings.

(4) It is straightforward.
\end{proof}

Our basic instrument is the following concept.

\begin{definition}
We say that $R$ is a {\it strongly $J^{\#}$-clean} ring if every element of $R$ is the sum of an idempotent from $R$ and an element from $J^{\#}(R)$ that commute with each other.
\end{definition}

We proceed by proving the following.

\begin{lemma}\label{2}
(1) Suppose \( R = \prod_{i \in I} R_i \). Then, \( R \) is a strongly $J^{\#}$-clean ring if, and only if, for each \( i \in I \), \( R_i \) is a strongly $J^{\#}$-clean ring.
	
(2) Suppose \( R \) is a strongly $J^{\#}$-clean ring and \( I \) is an ideal of \( R \) such that \( I \subseteq J(R) \). Then, \( R/I \) is a strongly $J^{\#}$-clean ring (in particular, $R/J(R)$ is a strongly $J^{\#}$-clean ring).
\end{lemma}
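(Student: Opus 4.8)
The plan is to prove both parts by reducing the strongly $J^{\#}$-clean condition to statements about idempotents and about membership in $J^{\#}$, then transporting those along the relevant ring homomorphisms. For part (2), the natural projection $\pi\colon R\to R/I$ is surjective, and the key structural fact I would establish first is that $\pi$ sends $J^{\#}(R)$ into $J^{\#}(R/I)$, and indeed (crucially) \emph{onto} it. The forward inclusion is immediate: if $z\in J^{\#}(R)$ with $z^n\in J(R)$, then $\pi(z)^n=\pi(z^n)\in\pi(J(R))\subseteq J(R/I)$ since $I\subseteq J(R)$ gives $J(R/I)=J(R)/I$. For surjectivity onto $J^{\#}(R/I)$, I would lift any $\bar w\in J^{\#}(R/I)$ to some $w\in R$ with $\pi(w)=\bar w$; then $\pi(w^n)=\bar w^n\in J(R/I)=J(R)/I$, so $w^n\in J(R)$ (using $I\subseteq J(R)$), hence $w\in J^{\#}(R)$.

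With that bridge in place, part (2) is nearly mechanical. Given any $\bar a\in R/I$, lift it to $a\in R$ and write $a=e+z$ with $e\in Id(R)$, $z\in J^{\#}(R)$, and $ez=ze$. Applying $\pi$ gives $\bar a=\pi(e)+\pi(z)$ where $\pi(e)$ is idempotent in $R/I$, $\pi(z)\in J^{\#}(R/I)$ by the inclusion just proved, and $\pi(e)\pi(z)=\pi(z)\pi(e)$ since $\pi$ is a homomorphism. This exhibits $\bar a$ in the required form, so $R/I$ is strongly $J^{\#}$-clean. The special case $I=J(R)$ is immediate since $J(R)\subseteq J(R)$.

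For part (1), the backward direction (each $R_i$ strongly $J^{\#}$-clean implies $R$ is) is the routine one: given $a=(a_i)_{i\in I}\in R$, decompose each coordinate as $a_i=e_i+z_i$ with $e_i\in Id(R_i)$, $z_i\in J^{\#}(R_i)$ commuting; then $e=(e_i)$ is idempotent in $R$, and by Lemma~\ref{close prod}(3) the element $z=(z_i)$ lies in $J^{\#}(R)=\prod J^{\#}(R_i)$, with $e$ and $z$ commuting coordinatewise. The forward direction follows because each projection $R\to R_i$ is a surjective ring homomorphism whose kernel decomposition respects the decomposition of $J^{\#}$; alternatively, $R_i$ is a direct factor, so one can apply the argument of part (2) in the split form, again invoking Lemma~\ref{close prod}(3) to control the $J^{\#}$ component.

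I expect the only genuinely delicate point to be the behavior of $J^{\#}$ under the quotient in part (2)---specifically confirming $\pi(J^{\#}(R))=J^{\#}(R/I)$ rather than merely an inclusion, since $J^{\#}$ is not an ideal and need not be closed under addition, so one cannot simply quote a standard correspondence theorem. The identity $J(R/I)=J(R)/I$ when $I\subseteq J(R)$ is what makes the lifting argument go through, and I would state that explicitly. Everything else is bookkeeping that relies on $\pi$ being a homomorphism and on part~(3) of Lemma~\ref{close prod} for the product case.
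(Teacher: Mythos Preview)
Your proof is correct and follows essentially the same route as the paper: for part~(2), lift an element, decompose it in $R$, and push the decomposition down via $\pi$, using $J(R/I)=J(R)/I$; for part~(1), work coordinatewise via Lemma~\ref{close prod}(3).

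Two small remarks. First, the surjectivity $\pi(J^{\#}(R))=J^{\#}(R/I)$ that you label ``crucial'' is never actually used in your argument---only the inclusion $\pi(J^{\#}(R))\subseteq J^{\#}(R/I)$ is needed, and that is all the paper proves. Second, your ``alternative'' for the forward direction of~(1), appealing to the argument of part~(2), does not apply as stated: the kernel of the projection $R\to R_i$ is $\prod_{j\neq i}R_j$, which is not contained in $J(R)$ in general. Your first route, projecting a decomposition in $R$ and invoking Lemma~\ref{close prod}(3) to place the projected component in $J^{\#}(R_i)$, is the correct one and is exactly what the paper does.
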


\begin{proof}
(1) This follows directly from Lemma~\ref{close prod}(3).

(2) Let $a+I \in R/I$ be arbitrary. Thus, $a \in R$, and we can write $a = e + j$ as a strongly $J^{\#}$-clean decomposition. So,
\[
a+I = (e+I) + (j+I),
\]
where $e+I \in Id(R/I)$ and $(e+I)(j+I) = (j+I)(e+I)$.
It remains to show that $j+I \in J^{\#}(R/I)$. Since $j \in J^{\#}(R)$, we deduce $j^n \in J(R)$ for some $n$. Therefore,
\[
(j+I)^n = j^n + I \in J(R) + I \subseteq J(R) + J(R) \subseteq J(R).
\]
Finally, $j+I \in J^{\#}(R/I)$, as desired.
\end{proof}

\begin{proposition}\label{unit}
Let \( R \) be a ring. A unit \( u \in R \) is strongly $J^{\#}$-clean if, and only if, \( 1 - u \in J^{\#}(R) \).
\end{proposition}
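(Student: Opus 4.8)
The plan is to treat the two implications separately, with the easy direction handled by the reduction $u = 1 + (u-1)$ and the substantive direction resting on forcing the idempotent in any strongly $J^{\#}$-clean decomposition of a unit to be $1$.

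For the sufficiency, suppose $1-u \in J^{\#}(R)$. Then I would simply write $u = 1 + (u-1)$, where $1$ is a central idempotent commuting with everything, and by Lemma~\ref{close prod}(2) the hypothesis gives $u-1 = -(1-u) \in J^{\#}(R)$. This is already a strongly $J^{\#}$-clean decomposition, so nothing further is needed.

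For the necessity, suppose $u = e + j$ with $e^2 = e$, $j \in J^{\#}(R)$ and $ej = je$. First I would record that $e$ commutes with $u$, since $eu = e^2 + ej = e + ej$ and $ue = e^2 + je = e + ej$ coincide. Setting $w = 1-e$, the relations $e(1-e) = 0$ and $ej = je$ give $uw = jw$ with $u, j, w$ pairwise commuting and $w^2 = w$. Choosing $m$ with $j^m \in J(R)$ (possible since $j \in J^{\#}(R)$) and raising to the $m$-th power, commutation together with idempotency of $w$ yields $u^m w = (uw)^m = (jw)^m = j^m w$. The crux is to exploit this identity: as $J(R)$ is a two-sided ideal and $j^m \in J(R)$, the right-hand side $j^m w$ lies in $J(R)$, hence so does $u^m w$, and multiplying on the left by the unit $u^{-m}$ forces $w \in J(R)$. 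Since $w$ is an idempotent and $J(R)$ contains no nonzero idempotents (if $w \in J(R)$ then $1-w$ is a unit while $w(1-w) = 0$, so $w = 0$), I conclude $w = 0$, that is $e = 1$. Then $u = 1 + j$, so $u - 1 = j \in J^{\#}(R)$, and Lemma~\ref{close prod}(2) delivers $1 - u = -(u-1) \in J^{\#}(R)$.

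I expect the only genuine obstacle to be the step proving $e = 1$; once the power identity $u^m w = j^m w$ is established, the conclusion follows from elementary ideal-theoretic properties of $J(R)$ and the absence of nonzero idempotents in the radical.
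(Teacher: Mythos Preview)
Your argument is correct. Both directions are handled cleanly, and the key step---forcing $e=1$ in any strongly $J^{\#}$-clean decomposition $u=e+j$ of a unit---is carried out without gaps.

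The paper reaches the same conclusion $e=1$ by a slightly different (and shorter) manoeuvre: it writes $e = u - j = u(1 - u^{-1}j)$ and observes that, since $u$ and $j$ commute, $(u^{-1}j)^n = u^{-n}j^n \in J(R)$, so $u^{-1}j \in J^{\#}(R)$ and hence $1 - u^{-1}j \in U(R)$; thus $e$ itself is a product of two units, forcing $e=1$. Your route instead passes to the complementary idempotent $w = 1-e$, derives $u^m w = j^m w \in J(R)$, and uses invertibility of $u^m$ together with the absence of nonzero idempotents in $J(R)$ to get $w=0$. The two arguments are morally the same---both exploit commutativity to push a power of $j$ into $J(R)$ and then use that $u$ is a unit---but the paper's factorisation $e = u(1-u^{-1}j)$ is a one-line shortcut, while your version makes the role of the ideal property of $J(R)$ and the radical's lack of idempotents more explicit. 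The paper omits the sufficiency direction entirely as obvious; your treatment of it via $u = 1 + (u-1)$ and Lemma~\ref{close prod}(2) is exactly what is intended.
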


\begin{proof}
To prove the necessity, assume \( u \in U(R) \) is strongly $J^{\#}$-clean. Then, \( u = e + j \), where \( e \in R \) is an idempotent, \( j \in J^{\#}(R) \) and \( ej = je \). Since \( uj = ju \), it follows that \( 1 - u^{-1}j \in U(R) \), and hence \( e = u(1 - u^{-1}j) \in U(R) \). Therefore, \( e = 1 \), and so \( 1 - u = -j \in J^{\#}(R) \), completing the argument.	
\end{proof}

\begin{corollary}\label{cor1}
If $R$ is strongly $J^{\#}$-clean, then $U(R)=1+J^{\#}(R)$.
\end{corollary}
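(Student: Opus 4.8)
The plan is to prove the two inclusions $U(R) \subseteq 1 + J^{\#}(R)$ and $1 + J^{\#}(R) \subseteq U(R)$ separately, using Proposition~\ref{unit} for the first and a direct invertibility argument for the second. For the forward inclusion, let $u \in U(R)$ be arbitrary. Since $R$ is strongly $J^{\#}$-clean, every element is strongly $J^{\#}$-clean, so in particular $u$ is. Proposition~\ref{unit} then tells us that $1 - u \in J^{\#}(R)$. By Lemma~\ref{close prod}(2) we get $u - 1 = -(1-u) \in J^{\#}(R)$, and hence $u = 1 + (u-1) \in 1 + J^{\#}(R)$. This direction is essentially immediate once the right quotable result is identified.

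For the reverse inclusion, I would take an element $1 + j$ with $j \in J^{\#}(R)$ and show it is a unit. Since $j \in J^{\#}(R)$, there is some $n \ge 1$ with $j^n \in J(R)$. The idea is to pass to the nilpotent-like behaviour of $j$ modulo $J(R)$: in the factor ring $R/J(R)$ the image $\bar{j}$ satisfies $\bar{j}^n = \bar{0}$, so $\bar{j}$ is nilpotent and therefore $\overline{1+j} = 1 + \bar{j}$ is a unit of $R/J(R)$ (its inverse being the finite alternating sum $1 - \bar{j} + \bar{j}^2 - \cdots$). A standard lifting argument then gives that $1 + j$ is a unit in $R$ itself, since an element of $R$ is a unit precisely when its image in $R/J(R)$ is a unit. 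Thus $1 + j \in U(R)$.

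The main obstacle, modest as it is here, lies in the reverse inclusion: one must be careful that $J^{\#}(R)$ is not assumed to be closed under addition or multiplication, so I cannot simply manipulate a geometric-type series $\sum (-j)^k$ inside $R$ and claim the partial behaviour stays within a subring. The clean way around this is precisely to work modulo $J(R)$, where $\bar{j}$ is genuinely nilpotent and the inverse of $1 + \bar{j}$ is an honest finite sum, and then invoke the well-known fact that units lift modulo the Jacobson radical. I would state that lifting fact explicitly (or cite it from \cite{lamf}) to keep the argument self-contained.

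Alternatively, one can bypass the factor ring altogether by noting directly that $j^n \in J(R)$ forces $1 + j$ to be invertible: from $j^n \in J(R)$ one checks that $(1+j)$ times an appropriate element differs from $1$ by an element of $J(R)$, and any element of the form $1 + (\text{element of } J(R))$ is a unit. Either route completes the proof, giving $U(R) = 1 + J^{\#}(R)$ as claimed.
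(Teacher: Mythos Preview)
Your proof is correct and follows essentially the same approach as the paper: Proposition~\ref{unit} handles the inclusion $U(R)\subseteq 1+J^{\#}(R)$, and the other inclusion is regarded as straightforward. The paper's proof is terser---it simply calls $1+J^{\#}(R)\subseteq U(R)$ ``apparent'' and does not spell out the lifting-modulo-$J(R)$ argument you give---but your added detail (and the explicit appeal to Lemma~\ref{close prod}(2) for the sign) is correct and fills in exactly what the paper leaves implicit.
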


\begin{proof}
It is apparent that $1 + J^{\#}(R) \subseteq U(R)$. The reverse inclusion follows directly from Proposition \ref{unit}, as required.
\end{proof}

\begin{corollary}\label{3}
If the ring \( R \) is strongly $J^{\#}$-clean, then $2\in J(R)$.
\end{corollary}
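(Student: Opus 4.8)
The plan is to first locate $2$ inside $J^{\#}(R)$ and then to promote this to genuine membership in $J(R)$ by exploiting that $2$ is central.

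First I would apply Corollary~\ref{cor1}. Since $-1 \in U(R) = 1 + J^{\#}(R)$, writing $-1 = 1 + j$ forces $j = -2 \in J^{\#}(R)$, whence $2 \in J^{\#}(R)$ by Lemma~\ref{close prod}(2). Fix $n \ge 1$ with $2^{n} \in J(R)$.

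The crux is that $J^{\#}(R)$ is \emph{not} an ideal, so the mere fact $2 \in J^{\#}(R)$ does not by itself deliver $2 \in J(R)$; the extra leverage must come from the centrality of $2$. For an arbitrary $r \in R$, centrality gives $(2r)^{n} = 2^{n} r^{n}$, and since $2^{n} \in J(R)$ while $J(R)$ is a two-sided ideal, we obtain $(2r)^{n} \in J(R)$, so that $1 - (2r)^{n} \in U(R)$. Then, using the telescoping identity
\[
(1 - 2r)\sum_{i=0}^{n-1}(2r)^{i} \;=\; \sum_{i=0}^{n-1}(2r)^{i}\,(1 - 2r) \;=\; 1 - (2r)^{n},
\]
in which the polynomial factor commutes with $1 - 2r$, I would conclude that $1 - 2r$ is a two-sided unit for every $r \in R$.

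Finally, the standard characterization of the Jacobson radical — namely that $x \in J(R)$ exactly when $1 - xr \in U(R)$ for all $r \in R$ — yields $2 \in J(R)$, as desired. I expect the delicate point to be precisely the middle step: one must resist arguing directly from $2 \in J^{\#}(R)$ (which is invalid, as $J^{\#}$ is only a subset) and instead use that $2$ commutes with every element, which is exactly what forces each $2r$ to be nilpotent modulo $J(R)$ and thereby activates the radical-membership test.
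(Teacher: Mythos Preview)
Your proof is correct and follows essentially the same route as the paper. Both arguments obtain $2\in J^{\#}(R)$ from Corollary~\ref{cor1} and Lemma~\ref{close prod}(2), then use centrality of $2$ to get $(2r)^{n}=2^{n}r^{n}\in J(R)$ for every $r$, and finally invoke the radical criterion via $1-2r\in U(R)$; the paper phrases the middle step as ``$2r\in J^{\#}(R)$, hence $1-2r\in U(R)$'' (using the already-noted inclusion $1+J^{\#}(R)\subseteq U(R)$), whereas you unpack that inclusion explicitly with the telescoping factorization, but this is a cosmetic difference only.
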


\begin{proof}
By Corollary \ref{cor1}, we get $U(R) = 1 + J^{\#}(R)$. Thus, $-1 \in 1 + J^{\#}(R)$, which implies $-2 \in J^{\#}(R)$. But, it follows from Lemma \ref{close prod}(2) that $2 \in J^{\#}(R)$. Since $2$ is central, for any $r \in R$, we infer $2r = r2 \in J^{\#}(R)$. Hence, $1 - 2r \in U(R)$, and thus $2 \in J(R)$.
\end{proof}

The next statement is immediate.

\begin{example}
(1) Every element in $J^{\#}(R)$ is strongly $J^{\#}$-clean.
	
(2) $a\in R$ is strongly $J^{\#}$-clean if, and only if, $1-a\in R$ is strongly $J^{\#}$-clean.	
\end{example}

\begin{lemma}\label{1.5}
Let $R$ be a ring and $e \in Id(R)$. Then, the equalities $$J^{\#}(eRe)=eRe\cap J^{\#}(R)=eJ^{\#}(R)e$$ hold.
\end{lemma}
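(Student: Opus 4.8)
The plan is to pin everything to the classical corner identity for the ordinary radical, namely $J(eRe)=eRe\cap J(R)=eJ(R)e$, valid for any idempotent $e$. I would either invoke this from \cite{lamf} or record it in one line: every element of $eJ(R)e$ lies both in the ideal $J(R)$ and in the corner $eRe$, while a quasi-regularity computation carried out inside the unital ring $eRe$ (with identity $e$) identifies its radical with $eRe\cap J(R)$. With this in hand the \emph{first} equality $J^{\#}(eRe)=eRe\cap J^{\#}(R)$ becomes routine, the one useful observation being that $eRe$ is multiplicatively closed, so $z\in eRe$ forces $z^n\in eRe$ for all $n$. Indeed, if $z\in J^{\#}(eRe)$ then $z^n\in J(eRe)\subseteq J(R)$ for some $n$, giving $z\in eRe\cap J^{\#}(R)$; conversely, if $z\in eRe\cap J^{\#}(R)$ then $z^n\in J(R)$ and $z^n\in eRe$, so $z^n\in eRe\cap J(R)=J(eRe)$ and thus $z\in J^{\#}(eRe)$.

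The \emph{second} equality $eRe\cap J^{\#}(R)=eJ^{\#}(R)e$ splits into two inclusions of very unequal difficulty. The inclusion $\subseteq$ is immediate, since any $z\in eRe$ satisfies $z=eze$, so if moreover $z\in J^{\#}(R)$ then $z=eze\in eJ^{\#}(R)e$. The reverse inclusion $eJ^{\#}(R)e\subseteq eRe\cap J^{\#}(R)$ is where I expect the genuine difficulty to sit, and it is the main obstacle of the whole statement: containment in $eRe$ is clear, but showing that $eae\in J^{\#}(R)$ for $a\in J^{\#}(R)$ is delicate precisely because, as the paper stresses, $J^{\#}(R)$ is \emph{not} an ideal, so there is no automatic stability of $J^{\#}(R)$ under the map $x\mapsto exe$.

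Concretely, writing $a^{m}\in J(R)$ and passing to $\overline{R}=R/J(R)$, the reverse inclusion reduces to the assertion that $\overline{e}\,\overline{a}\,\overline{e}$ is nilpotent whenever $\overline{a}$ is nilpotent and $\overline{e}$ is idempotent, i.e.\ that some power $(eae)^{k}$ lands in $J(R)$. I would attack this by expanding the powers $(eae)^{k}$ as alternating products $eae\,ae\cdots ae$ and trying to route the nilpotence of $a$ through the idempotent $e$, working inside the corner $\overline{e}\,\overline{R}\,\overline{e}$ where by the first equality membership in $J^{\#}$ can be tested against $J(eRe)=eRe\cap J(R)$. Establishing that $(eae)^{k}\in J(R)$ for some $k$ is the crux on which the third expression rests; I would flag at the outset that this is the step requiring the most care, since without the ideal property of $J^{\#}(R)$ the argument cannot merely mimic the classical proof of $eJ(R)e\subseteq J(R)$.
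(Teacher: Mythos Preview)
Your handling of the first equality $J^{\#}(eRe)=eRe\cap J^{\#}(R)$ is correct and is precisely what the paper's one-line argument (``follows from $J(eRe)=eRe\cap J(R)=eJ(R)e$ and the definition of $J^{\#}$'') would unpack to.

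Your hesitation over the reverse inclusion $eJ^{\#}(R)e\subseteq eRe\cap J^{\#}(R)$, however, is not excess caution but a genuine obstruction: that inclusion is \emph{false} in general, so the second equality of the lemma does not hold as stated, and the programme you sketch for showing $(eae)^{k}\in J(R)$ cannot be completed. Take $R=M_{2}(k)$ over a field $k$, so that $J(R)=0$ and $J^{\#}(R)=Nil(R)$. With
\[
e=\begin{pmatrix}1&0\\0&0\end{pmatrix},\qquad a=\begin{pmatrix}1&1\\-1&-1\end{pmatrix},
\]
one has $a^{2}=0$, hence $a\in J^{\#}(R)$, yet $eae=e$ is a non-zero idempotent and therefore $eae\notin J^{\#}(R)$. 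Thus $eJ^{\#}(R)e$ strictly contains $eRe\cap J^{\#}(R)=\{0\}$ in this ring. The paper's proof does not address this point either; it simply asserts that the classical radical identity transfers. Note, though, that everything the paper actually \emph{uses} downstream (Corollary~\ref{1} and its applications) relies only on the first equality together with the commuting hypothesis $ej=je$, under which $(ej)^{n}=ej^{n}\in J(R)$ is immediate---so the defect in the stated lemma does not propagate through the rest of the paper.
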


\begin{proof}
According to the well-known equalities $J(eRe)=eRe\cap J(R)=eJ(R)e$ and the definition of $J^{\#}(R)$, the claim follows.
\end{proof}

The next consequence is now immediately true.

\begin{corollary}\label{1}
Let \( j \in J^{\#}(R) \) and \( e^2 = e \in R \). If \( ej = je \), then \[ej \in J^{\#}(R)\cap eRe=J^{\#}(eRe).\]
\end{corollary}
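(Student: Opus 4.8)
The plan is to establish the corollary directly from the two immediately preceding results, Lemma~\ref{1.5} and Corollary~\ref{1.5}'s setup, treating it as a near-trivial bookkeeping consequence. The statement asks to show two things: first, that $ej \in J^{\#}(R) \cap eRe$, and second, that this intersection equals $J^{\#}(eRe)$. The second equality is precisely one of the identities supplied by Lemma~\ref{1.5}, so I would simply invoke it and concentrate the argument entirely on the membership $ej \in J^{\#}(R) \cap eRe$.

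First I would verify that $ej \in eRe$. Since $e$ is idempotent and $ej = je$ by hypothesis, I can write $ej = e(ej) = e(je) = (ej)e$, so $ej = e(ej)e \in eRe$; alternatively $ej = eje$ follows at once from $ej = je$ upon right-multiplying by $e$ and using $e^2 = e$. Next I would verify the more substantive membership $ej \in J^{\#}(R)$. Because $e$ and $j$ commute, powers of the product split as $(ej)^n = e^n j^n = e j^n$ for every $n \ge 1$. Since $j \in J^{\#}(R)$, there is some $n$ with $j^n \in J(R)$; then $(ej)^n = e j^n \in J(R)$, as $J(R)$ is a two-sided ideal and hence absorbs multiplication by $e$ on the left. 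This exhibits a power of $ej$ lying in $J(R)$, which is exactly the condition for $ej \in J^{\#}(R)$.

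Combining the two memberships gives $ej \in J^{\#}(R) \cap eRe$, and then the equality $J^{\#}(R) \cap eRe = eRe \cap J^{\#}(R) = J^{\#}(eRe)$ from Lemma~\ref{1.5} completes the chain. I do not anticipate a genuine obstacle here: the only point requiring the commutativity hypothesis $ej = je$ is the clean splitting $(ej)^n = e j^n$, and without it the product of an idempotent and a $J^{\#}$-element need not remain in $J^{\#}(R)$. The mild care worth exercising is to state explicitly that $J(R)$ being an ideal is what guarantees $e j^n \in J(R)$, so that the $J^{\#}$-membership is beyond doubt; beyond that, the proof is a one-line appeal to Lemma~\ref{1.5} for the displayed equality.
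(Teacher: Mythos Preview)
Your proof is correct and aligns with the paper's treatment: the paper simply declares the corollary ``immediately true'' following Lemma~\ref{1.5}, and your argument is exactly the verification that makes that immediacy explicit. One slight streamlining you might note: since Lemma~\ref{1.5} also gives $J^{\#}(eRe) = eJ^{\#}(R)e$, the commutation $ej = je$ yields $ej = eje \in eJ^{\#}(R)e$ in one step, bypassing the separate power computation---but your route via $(ej)^n = ej^n \in J(R)$ is equally valid and arguably more self-contained.
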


For an element \( a \) of a ring \( R \), put \( \ell(a) := \{ x \in R \mid xa = 0 \} \) and \( r(a) := \{ x \in R \mid ax = 0 \} \).

\medskip

The next extra technicalities are also worthwhile for our further presentation.

\begin{lemma}\label{0}
Let \( R \) be a ring, and suppose \( a = e + j \) is a strongly $J^{\#}$-clean expression of \( a \) in \( R \). Then, \( \ell(a) \subseteq R(1 - e) \) and \( r(a) \subseteq (1 - e)R \).
\end{lemma}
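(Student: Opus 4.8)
The plan is to recast each annihilator containment as a statement about the idempotent $e$. Since $1-e$ is idempotent, the membership $x \in R(1-e)$ is equivalent to $xe = 0$ (indeed $x = x(1-e) + xe$), and dually $x \in (1-e)R$ is equivalent to $ex = 0$. Thus it suffices to prove: if $xa = 0$ then $xe = 0$, and if $ax = 0$ then $ex = 0$. I will treat the left annihilator $\ell(a)$ first and obtain the right one $r(a)$ by a mirror-image argument with all multiplications performed on the opposite side.

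Starting from $xa = x(e+j) = 0$ I extract the basic relation $xe = -xj$. The key leverage is that $e$ and $j$ commute and $e$ is idempotent, which lets me promote this relation to higher powers of $j$: multiplying $xe = -xj$ on the right by $e$ and using $je = ej$ together with $xe\cdot e = xe$ returns an expression of the same shape with $j$ replaced by a higher power. Iterating this (a one-line induction on $k$) I expect to reach $xe = (-1)^k x j^k$ for every $k \ge 1$.

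Now I invoke $j \in J^{\#}(R)$ to choose an $n$ with $j^n \in J(R)$. Specializing the identity to $k = n$ and multiplying once more on the right by $e$ yields $xe = (-1)^n (xe)\, j^n$, that is, $(xe)\bigl(1 - (-1)^n j^n\bigr) = 0$. Since $j^n \in J(R)$, the factor $1 - (-1)^n j^n$ lies in $1 + J(R)$ and is therefore a unit, forcing $xe = 0$ and hence $x = x(1-e) \in R(1-e)$. The containment $r(a) \subseteq (1-e)R$ follows verbatim, propagating $ex = -jx$ to $ex = (-1)^n j^n x$ and deducing $ex = 0$.

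The one genuine obstacle is that $j$ need not lie in $J(R)$ itself — only some power does — so I cannot simply declare $1 \pm j$ a unit and cancel in a single step, which is exactly how the strongly $J$-clean situation would terminate. The commutativity $ej = je$ is what rescues the argument: it is precisely the hypothesis that permits the relation $xe = -xj$ to be carried forward to the power $j^n$ that actually lands in $J(R)$, at which point the standard fact $1 + J(R) \subseteq U(R)$ closes everything off. I therefore expect the induction step and the bookkeeping of signs to be entirely routine, with the only conceptual content being the passage to a power inside $J(R)$.
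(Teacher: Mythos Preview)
Your argument is correct and shares the paper's strategy of reducing both containments to $xe=0$ (resp.\ $ex=0$) and then cancelling a unit. The difference is that your induction up to a power $j^n\in J(R)$ is unnecessary: the paper uses that $1+j$ is \emph{already} a unit whenever $j\in J^{\#}(R)$ (this is the inclusion $1+J^{\#}(R)\subseteq U(R)$, obtained from the geometric-series identity $(1+j)\sum_{i=0}^{n-1}(-j)^i=1-(-j)^n\in 1+J(R)$, and invoked repeatedly in the paper, e.g.\ in Corollary~\ref{cor1}). With this in hand the paper finishes in one line: from $xe=-xj$ one gets $xje=-xe$ by right-multiplying by $e$, whence $xe(1+j)=xe+xje=0$ and $xe=0$. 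So what you flagged as the ``one genuine obstacle'' is not an obstacle at all; your iterated version works, but the paper's route is shorter.
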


\begin{proof}
We shall establish only \( \ell(a) \subseteq R(1 - e) \), because the inclusion for \( r(a) \) follows by analogy. To that end, take any \( x \in \ell(a) \), so \( xa = x(e + j) = 0 \). This gives \( xe = -xj = -xje \) and, multiplying both sides of these equalities by \( (1 + j) \) from the right, we obtain \( xe(1 + j) = 0 \). Since \( j \in J^{\#}(R) \), it follows that \( 1 + j \in U(R) \), which implies \( xe = 0 \). Therefore, \( x = x(1 - e) \), and thus \( x \in R(1 - e) \), as claimed.
\end{proof}

\begin{proposition}
Let \( R \) be a ring, and let \( e^2 = e \in R \). Then, an element \( a \in eRe \) is strongly $J^{\#}$-clean in \( R \) if, and only if, \( a \) is strongly $J^{\#}$-clean in \( eRe \).
\end{proposition}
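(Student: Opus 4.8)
The plan is to prove the two implications separately, with the backward direction being essentially formal and the forward direction resting entirely on the localization statement of Lemma~\ref{0}. Throughout I will write the idempotent appearing in a decomposition as $g$ (reserving $e$ for the corner idempotent), and I will lean on Lemma~\ref{1.5} to pass freely between $J^{\#}(eRe)$ and $eRe\cap J^{\#}(R)$.

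For the easy implication, I would assume that $a$ is strongly $J^{\#}$-clean in $eRe$, say $a=f+j$ with $f\in Id(eRe)$, $j\in J^{\#}(eRe)$ and $fj=jf$. Since the multiplication on $eRe$ is inherited from $R$, the relation $f^2=f$ still holds in $R$, so $f\in Id(R)$; moreover $J^{\#}(eRe)=eRe\cap J^{\#}(R)\subseteq J^{\#}(R)$ by Lemma~\ref{1.5}, so $j\in J^{\#}(R)$; and the commuting relation is unchanged. Hence $a=f+j$ is already a strongly $J^{\#}$-clean decomposition in $R$.

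For the converse, suppose $a\in eRe$ is strongly $J^{\#}$-clean in $R$, say $a=g+j$ with $g^2=g\in R$, $j\in J^{\#}(R)$ and $gj=jg$. The crux is to show that the idempotent $g$ in fact lies in the corner ring $eRe$. Since $a=eae$, we have $(1-e)a=0=a(1-e)$, so $1-e\in \ell(a)\cap r(a)$. Applying Lemma~\ref{0} to the decomposition $a=g+j$ yields $\ell(a)\subseteq R(1-g)$ and $r(a)\subseteq(1-g)R$; expressing that $1-e$ lies in these one-sided ideals (each element of $R(1-g)$ is annihilated on the right by $g$, and dually for $(1-g)R$) gives $(1-e)g=0$ and $g(1-e)=0$, that is $g=eg=ge$. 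Consequently $g=ege\in eRe$, and since $g^2=g$ this $g$ is an idempotent of $eRe$.

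Once $g\in eRe$ is secured, the remainder is immediate: $j=a-g$ is a difference of two elements of $eRe$, hence $j\in eRe$, and together with $j\in J^{\#}(R)$ and Lemma~\ref{1.5} this gives $j\in eRe\cap J^{\#}(R)=J^{\#}(eRe)$. The relation $gj=jg$ persists, so $a=g+j$ is a strongly $J^{\#}$-clean decomposition in $eRe$. The only genuine obstacle here is forcing the idempotent into the corner, and this is exactly what Lemma~\ref{0} supplies; everything else is routine bookkeeping streamlined by the identification of $J^{\#}(eRe)$ in Lemma~\ref{1.5}.
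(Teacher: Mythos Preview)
Your proof is correct and follows essentially the same route as the paper: both directions hinge on Lemma~\ref{0} to force the idempotent into the corner, and both use Lemma~\ref{1.5} (equivalently Corollary~\ref{1}) to identify $J^{\#}(eRe)$ with $eRe\cap J^{\#}(R)$. Your presentation is in fact slightly cleaner, since from $(1-e)g=0$ and $g(1-e)=0$ you directly conclude $g=eg=ge\in eRe$, whereas the paper records only $eg=ge$ and then works with the compressed elements $ege$ and $ej'e$ (which, as your argument makes clear, already equal $g$ and $j$).
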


\begin{proof}
Assume \( a \in eRe \) is strongly $J^{\#}$-clean in \( eRe \). Write \( a = f + j \), where \( f^2 = f \in eRe \), \( j \in J^{\#}(eRe) \) and \( fj = jf \). Clearly, \( f \) is an idempotent in \( R \). Consulting with Corollary \ref{1}, \( J^{\#}(eRe) \subseteq J^{\#}(R) \), so \( a \) is strongly $J^{\#}$-clean in \( R \), as needed.

Conversely, suppose \( a = g + j' \), where \( g^2 = g \in R \), \( j' \in J^{\#}(R) \) and \( gj' = j'g \). Since \( a = eae \), it follows that \( 1 - e \in \ell(a) \cap r(a) \). Now, Lemma \ref{0} works to get that \[ 1 - e \in R(1 - g) \cap (1 - g)R = (1 - g)R(1 - g) .\] This, in turn, means that \( eg = ge \), and \( ej' = e(a - g) = (a - g)e = j'e \). Note, moreover, that \( (ege)^2 = ege \in eRe \) and \( (ege)(ej'e) = (ej'e)(ege) \). Looking at Corollary \ref{1} again, we derive \( ej'e \in J^{\#}(R) \cap eRe = J^{\#}(eRe) \). Therefore, \( a = ege + ej'e \) is strongly $J^{\#}$-clean in \( eRe \), a srequired.
\end{proof}

An immediate consequence is the following one.

\begin{corollary}\label{corner}
Let $R$ be a strongly $J^{\#}$-clean ring and $e \in Id(R)$. Then, the corner subring $eRe$ is too a strongly $J^{\#}$-clean ring.
\end{corollary}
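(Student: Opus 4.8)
The plan is to deduce this statement directly from the preceding Proposition, which characterizes exactly when an element of the corner $eRe$ is strongly $J^{\#}$-clean, so almost all of the work has already been done. Before the reduction, I would pause to record the trivial but necessary point that $eRe$ is itself a unital associative ring, with identity element $e$; only then does the predicate \emph{strongly $J^{\#}$-clean ring} make sense for $eRe$, since idempotents, units, the Jacobson radical, and hence the set $J^{\#}(eRe)$, must all be interpreted internally to $eRe$. Lemma~\ref{1.5} already guarantees the compatibility $J^{\#}(eRe) = eRe \cap J^{\#}(R)$ that makes the two notions of $J^{\#}$ coincide on the relevant elements.

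The core of the argument is then a one-line reduction. I would take an arbitrary element $a \in eRe$. Since $a$ lies in $R$ and $R$ is assumed to be strongly $J^{\#}$-clean, $a$ admits a strongly $J^{\#}$-clean decomposition in $R$. Invoking the ``only if'' direction of the preceding Proposition --- which asserts that an element of $eRe$ that is strongly $J^{\#}$-clean in $R$ is already strongly $J^{\#}$-clean in $eRe$ --- I conclude that $a$ is strongly $J^{\#}$-clean in $eRe$. As $a \in eRe$ was arbitrary, every element of $eRe$ decomposes appropriately, which is precisely the assertion that $eRe$ is a strongly $J^{\#}$-clean ring.

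There is essentially no genuine obstacle here, which is why the author bills the result as an immediate consequence: the substantive content, namely the transfer of a decomposition $a = g + j'$ from $R$ to the compressed decomposition $a = ege + ej'e$ in $eRe$, rests on Lemma~\ref{0} and Corollary~\ref{1} and has already been carried out inside the Proposition. The only detail I would be careful to state explicitly is the unitality of $eRe$ together with the application of Lemma~\ref{1.5}, since these are what license treating ``strongly $J^{\#}$-clean in $eRe$'' as a statement about $eRe$ as a ring in its own right rather than merely about decompositions taking place within the ambient $R$.
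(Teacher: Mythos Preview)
Your proposal is correct and matches the paper's intended argument exactly: the paper labels the corollary an immediate consequence of the preceding Proposition, and you have spelled out that immediate consequence --- take $a\in eRe$, use strong $J^{\#}$-cleanness of $R$ to decompose it in $R$, then invoke the ``only if'' direction of the Proposition to transfer the decomposition into $eRe$. Your additional remarks about unitality of $eRe$ and the role of Lemma~\ref{1.5} are accurate housekeeping that the paper leaves implicit.
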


\begin{lemma}\label{matrix}
If \( e = 0 \), then \( e \neq u + v \) for all \( u, v \in U(eRe) \). In particular, any matrix ring \( M_n(R) \) with \( n \geq 2 \) cannot be strongly $J^{\#}$-clean.
\end{lemma}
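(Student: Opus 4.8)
The plan is to reduce the matrix assertion to a single structural fact about the additive behaviour of units, and then to supply an explicit two-unit decomposition of the $2\times 2$ identity that survives over an arbitrary base ring. The fact I would establish first is: \emph{in a nonzero strongly $J^{\#}$-clean ring $S$, its identity cannot be written as $u+v$ with $u,v\in U(S)$.} (This is the content of the first assertion once one reads the corner $eRe$ as the ambient ring, with its identity $e\neq 0$.) To see it, recall from Corollary~\ref{cor1} that $U(S)=1+J^{\#}(S)$; so if $1_S=u+v$ with $u=1+j$, $j\in J^{\#}(S)$, then $v=1_S-u=-j\in J^{\#}(S)$ by Lemma~\ref{close prod}(2). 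Thus $v$ is simultaneously a unit and an element of $J^{\#}(S)$, i.e. $v^{n}\in J(S)$ is a unit for some $n\ge 1$; this forces $1\in J(S)$ and hence $S=0$, a contradiction.

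Combining this with the corner machinery handles the first sentence: if $e\neq 0$ is an idempotent of a strongly $J^{\#}$-clean ring $R$, then Corollary~\ref{corner} makes $eRe$ a nonzero strongly $J^{\#}$-clean ring whose identity is $e$, and the observation above gives $e\neq u+v$ for all $u,v\in U(eRe)$. For the matrix statement, fix $R\neq 0$ and $n\ge 2$ and suppose $M_n(R)$ were strongly $J^{\#}$-clean. Taking the nonzero idempotent $e=E_{11}+E_{22}$ yields $eM_n(R)e\cong M_2(R)$, again strongly $J^{\#}$-clean by Corollary~\ref{corner}. But in $M_2(R)$ the identity does decompose into two units, e.g.
\[
\begin{pmatrix}1&0\\0&1\end{pmatrix}=\begin{pmatrix}1&1\\1&0\end{pmatrix}+\begin{pmatrix}0&-1\\-1&1\end{pmatrix},
\]
where the two summands are units for \emph{every} $R$ because they are integral matrices with integral two-sided inverses $\left(\begin{smallmatrix}0&1\\1&-1\end{smallmatrix}\right)$ and $\left(\begin{smallmatrix}-1&-1\\-1&0\end{smallmatrix}\right)$, respectively. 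This contradicts the previous paragraph, so $M_n(R)$ cannot be strongly $J^{\#}$-clean.

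The step I expect to be the crux is the first one, and the point to handle carefully is that a unit can never lie in $J^{\#}(S)$ of a nonzero ring: one must pass to the power landing in $J(S)$ and use that a unit inside the proper ideal $J(S)$ collapses the ring. The only other delicate point is insisting that the two matrices above be units over an arbitrary, possibly noncommutative, $R$; this is why I would avoid determinant arguments and instead exhibit explicit integral inverses, which map into $M_2(R)$ for any $R$. (Implicitly $R\neq 0$ throughout, since the zero ring is vacuously strongly $J^{\#}$-clean.)
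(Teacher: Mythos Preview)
Your proof is correct and follows essentially the same route as the paper's: both reduce to showing that in a nonzero strongly $J^{\#}$-clean ring the identity cannot be a sum of two units (via Proposition~\ref{unit}/Corollary~\ref{cor1}, yielding one summand in $U\cap J^{\#}$), and then exhibit an explicit two-unit decomposition of $I_2$ to rule out $M_2(R)$, passing to $M_n(R)$ via Corollary~\ref{corner}. The only cosmetic differences are your choice of matrices and your explicit display of integral inverses, which is a welcome touch for the noncommutative base case; you also correctly read the hypothesis as $e\neq 0$ (the ``$e=0$'' in the statement is evidently a typo).
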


\begin{proof}
Suppose on the contrapositive that \( e = u + v \) for some \( u, v \in U(eRe) \). Since \( eRe \) is strongly $J^{\#}$-clean by Corollary \ref{corner} and \( u \in U(eRe) \), it follows from Proposition \ref{unit} that \( e - u = v \in J^{\#}(eRe) \). Hence, \( v \in U(eRe) \cap J^{\#}(eRe) \), which is the pursued contradiction.

To sustain the second, partial, claim, consider the matrix ring \( M_2(R) \). Set
\[
f := \begin{pmatrix}1 & 0 \\ 0 & 1\end{pmatrix} \in M_2(R).
\]
Then, one knows that \( f^2= f \neq 0 \), and simply write that
\[
\begin{pmatrix}1 & 0 \\ 0 & 1\end{pmatrix} =
\begin{pmatrix}1 & -1 \\ -1 & 0\end{pmatrix} +
\begin{pmatrix}0 & 1 \\ 1 & 1\end{pmatrix},
\]
which expresses the identity of $M_2(R)$ as a sum of two units in \( fM_2(R)f \). Therefore, \( M_2(R) \) is, manifestly, not strongly $J^{\#}$-clean. Now, Corollary \ref{corner} employs to conclude that \( M_n(R) \) is not strongly $J^{\#}$-clean for all \( n \geq 2 \), as stated.
\end{proof}

A set $\{e_{ij} : 1 \le i, j \le n\}$ of non-zero elements of $R$ is said to be a {\it system of $n^2$ matrix units} if $e_{ij}e_{st} = \delta_{js}e_{it}$, where $\delta_{jj} = 1$ and $\delta_{js} = 0$ for $j \neq s$. In this case, $e := \sum_{i=1}^{n} e_{ii}$ is an idempotent of $R$ and $eRe \cong M_n(S)$, where $$S = \{r \in eRe : re_{ij} = e_{ij}r,~~\textrm{for all}~~ i, j = 1, 2, . . . , n\}.$$

Recall the standard terminology that a ring $R$ is said to be {\it Dedekind-finite} assuming $ab=1$ yields $ba=1$ for any $a,b\in R$. In other words, all one-sided inverses in the ring are two-sided.

\medskip

We, thus, come to the following.

\begin{lemma} \label{dedkind finite}
Every strongly $J^{\#}$-clean ring is Dedekind-finite.
\end{lemma}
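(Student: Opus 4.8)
The plan is to reduce everything to the quotient $\overline{R} := R/J(R)$, where the set $J^{\#}$ collapses to the nilpotents, and to exploit the elementary fact that a nilpotent element can never be right-invertible in a nonzero ring. Suppose $ab=1$ in $R$; I want to conclude $ba=1$. The guiding observation is that $ba$ is automatically idempotent, so it suffices to show that $ba$ becomes a unit, since an idempotent that is a unit must equal $1$.

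First I would pass to $\overline{R}$. By Lemma~\ref{2}(2) this ring is again strongly $J^{\#}$-clean, and now $J(\overline{R})=0$, so that $J^{\#}(\overline{R})=\{z:z^{n}=0 \text{ for some } n\}=Nil(\overline{R})$; thus every element of $\overline{R}$ is the sum of an idempotent and a commuting nilpotent. It is therefore enough to prove that $\overline{R}$ is Dedekind-finite: once $\overline{a}\,\overline{b}=1$ forces $\overline{b}\,\overline{a}=1$, we get $ba-1\in J(R)$, whence $ba\in 1+J(R)\subseteq U(R)$ (the inclusion $1+J^{\#}(R)\subseteq U(R)$ being the one already recorded in the proof of Corollary~\ref{cor1}); as $ba$ is idempotent, it then equals $1$.

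For the core statement, given $\overline{a}\,\overline{b}=1$ I would fix a strongly $J^{\#}$-clean decomposition $\overline{a}=g+n$ with $g^{2}=g$, $n\in J^{\#}(\overline{R})=Nil(\overline{R})$, and $gn=ng$. A one-line check gives $\overline{a}g=g\overline{a}$, so $\overline{a}$ respects the Peirce decomposition attached to $g$; in the corner $T:=(1-g)\overline{R}(1-g)$ the element $\overline{a}$ restricts to $(1-g)\overline{a}(1-g)=n(1-g)$, which is nilpotent. Writing $b_{2}:=(1-g)\overline{b}(1-g)$ and using $g\overline{a}=\overline{a}g$ together with $\overline{a}\,\overline{b}=1$, I would verify the key identity $\overline{a}(1-g)\overline{b}=1-g$, from which $a_{2}b_{2}=1_{T}$ follows for $a_{2}:=n(1-g)$. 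Thus $a_{2}$ is a nilpotent element that is right-invertible in $T$.

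The hard part — really the crux — is to extract the contradiction: if $a_{2}^{m}=0$ with $m\ge 1$ minimal, then $a_{2}^{\,m-1}=a_{2}^{m}b_{2}=0$, which forces $m=1$, hence $a_{2}=0$ and $1_{T}=a_{2}b_{2}=0$, i.e. $T=0$. Consequently $1-g=0$, so $g=1$ and $\overline{a}=1+n$ is a unit; then $\overline{a}\,\overline{b}=1$ yields $\overline{b}\,\overline{a}=1$. This proves that $\overline{R}$ is Dedekind-finite, and by the reduction above that $R$ itself is Dedekind-finite. I expect the only delicate points to be the bookkeeping that $\overline{a}$ commutes with its idempotent part and the direct computation of $a_{2}b_{2}$; everything else is forced once the problem is moved into the corner $T$.
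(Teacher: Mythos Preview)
Your argument is correct and takes a genuinely different route from the paper. The paper argues by contradiction: if $ab=1$ but $ba\neq 1$, the classical matrix-units $e_{ij}=a^{i}(1-ba)b^{j}$ produce, for each $n\ge 2$, an idempotent $e$ with $eRe\cong M_{n}(S)$ for some nonzero ring $S$; since corners of strongly $J^{\#}$-clean rings are strongly $J^{\#}$-clean (Corollary~\ref{corner}) while $M_{n}(S)$ never is (Lemma~\ref{matrix}), this is impossible. Your proof instead passes to $\overline{R}=R/J(R)$, where the $J^{\#}$-clean decomposition becomes ``idempotent plus commuting nilpotent'', and then uses a short corner computation to show that a right-invertible element there is actually a unit; lifting back, $ba\in 1+J(R)\subseteq U(R)$ is an idempotent unit, hence equals $1$. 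The paper's route leverages the structural lemmas already set up (about corners and matrix rings) and so is very short once those are in place; your route is more self-contained, needing only Lemma~\ref{2}(2) and an elementary nilpotency argument, and in effect reproves directly that strongly nil-clean rings are Dedekind-finite.
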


\begin{proof}
Suppose the contrary that \( R \) is not Dedekind-finite. Then, there exist elements \( a, b \in R \) such that \( ab = 1 \) but \( ba \neq 1 \). Define \( e_{ij} := a^i(1 - ba)b^j \), and let \( e = \sum_{i=1}^n e_{ii} \). So, there exists a non-zero ring \( S \) such that \( eRe \cong M_n(S) \). However, with Lemma \ref{corner} in hand, the ring \( eRe \) is strongly \( J^{\#} \)-clean, implying that \( M_n(S) \) is also strongly \( J^{\#} \)-clean, thus contradicting Lemma \ref{matrix}.
\end{proof}

The next technical criterion is critical.

\begin{proposition}
Let \( R \) be a ring. Then, an element \( a \in R \) is strongly $J^{\#}$-clean if, and only if, there exists \( x \in R \) such that \( x^2 a = x \), \( ax = xa \), and \( a - ax \in J^{\#}(R) \).
\end{proposition}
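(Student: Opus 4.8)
The plan is to prove the two directions separately, using the element $x$ as an explicit ``inverse-like'' witness that encodes the idempotent.

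\medskip

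\textbf{Necessity.} Suppose $a = e + j$ is a strongly $J^{\#}$-clean decomposition, so $e^2 = e$, $j \in J^{\#}(R)$, and $ej = je$. Since $ej = je$, the element $a = e + j$ commutes with both $e$ and $j$, and I expect the correct choice of witness to be an element built from the unit $e - (1-e)j\cdots$; more precisely, I would set $u := e + (1-e)$ adjusted by $j$ so that it is a unit commuting with $a$. Concretely, note $a = e + j$ gives $a - ae = (1-e)a = (1-e)j \in J^{\#}(R)$, since $e$ and $j$ commute. The natural candidate is to take $x$ to be the element that inverts $a$ ``on the corner $eRe$'' and kills the complementary corner. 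Because $j$ commutes with $e$, we have $eje = ej \in J^{\#}(eRe)$ by Corollary~\ref{1}, so $eae = e + ej$ is a unit of $eRe$ (its the identity $e$ of $eRe$ plus an element of $J^{\#}(eRe) \subseteq$ the set sending $e + (\text{that element})$ into $U(eRe)$, by the analogue of $1 + J^{\#} \subseteq U$). Let $v \in eRe$ be the inverse of $eae$ in $eRe$, and set $x := v = (eae)^{-1}$ computed inside $eRe$. Then $x \in eRe$ commutes with $a$ (since everything in sight commutes with $e$ and $j$), and one checks directly that $x^2 a = x$, $ax = xa$, and $a - ax = a - e = j \in J^{\#}(R)$.

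\medskip

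\textbf{Sufficiency.} Conversely, assume there is an $x$ with $x^2 a = x$, $ax = xa$, and $a - ax \in J^{\#}(R)$. Put $e := ax = xa$. The first task is to verify $e$ is idempotent: using $x^2 a = x$ and the commutativity $ax = xa$, compute $e^2 = (ax)(ax) = a(xa)x = a(x^2 a)x/\ldots$; the clean way is $e^2 = axax = a(x^2a)x^{?}$, so I would instead write $e^2 = (xa)(ax) = x(ax)a = x(a^2x)$ and reduce via $x^2a = x$ to land back on $e$. The key identity to extract is that $x^2 a = x$ forces $x = ex$ and $xe = x$, from which $e^2 = e$ follows cleanly. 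Next set $j := a - ax = a - e = a(1 - x)\cdots$; commutativity $ax = xa$ gives $ej = je$ immediately since $e = ax$ commutes with $a$. Finally $j = a - ax \in J^{\#}(R)$ is given by hypothesis, so $a = e + j$ is the desired strongly $J^{\#}$-clean decomposition.

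\medskip

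The main obstacle I anticipate is the necessity direction, specifically justifying that $eae = e + ej$ is genuinely invertible \emph{inside} $eRe$ with an inverse that, when viewed in $R$, satisfies the three global identities $x^2a = x$, $ax = xa$, and $a - ax \in J^{\#}(R)$ simultaneously. The delicate point is that $x$ lives in the corner $eRe$ but must interact correctly with the whole of $a = e + j$; the computation $ax = (e+j)x = ex + jx$ must collapse to $e = xa$, which relies on $x \in eRe$ (so $ex = x$) together with $jx$ being absorbed correctly. I would handle this by doing all invertibility arguments inside $eRe$ using Lemma~\ref{1.5} and Corollary~\ref{1} (so that $J^{\#}(eRe) = eRe \cap J^{\#}(R)$ and the unit structure $1_{eRe} + J^{\#}(eRe) \subseteq U(eRe)$ is available), and only afterward translating back to $R$, where the complementary corner $(1-e)$ contributes nothing because $x$ is supported on $eRe$.
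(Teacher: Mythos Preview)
Your plan is correct and lands on essentially the same argument as the paper, though your necessity direction takes a detour that can be shortened considerably.

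For sufficiency, you and the paper do the same thing: set $e := ax$ and verify $e^2 = e$. Your displayed attempts at this computation are garbled, but the clean one-liner is exactly what you are groping toward:
\[
e^2 = (ax)(ax) = (ax)(xa) = a(x^2 a) = ax = e,
\]
using $ax = xa$ once and $x^2 a = x$ once. Then $ae = ea$ is automatic since $e = ax$ and $a$ commutes with $x$, and $a - e = a - ax \in J^{\#}(R)$ is the hypothesis. There is no need for the auxiliary identities $ex = x = xe$ (though they are true).

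For necessity, your corner-ring construction is correct but more elaborate than needed. The paper simply observes that
\[
a + 1 - e \;=\; (e + j) + 1 - e \;=\; 1 + j \;\in\; U(R),
\]
and sets $x := (a+1-e)^{-1}e$ directly in $R$. Since $a$, $e$, $j$, and hence $(a+1-e)^{-1}$ all commute, one gets immediately
\[
ax = xa = (a+1-e)^{-1}\,e\,a = (a+1-e)^{-1}\,e\,(a+1-e) = e,
\]
whence $x^2 a = x(ax) = xe = (a+1-e)^{-1}e^2 = x$ and $a - ax = a - e = j \in J^{\#}(R)$. Your element (the inverse of $eae = e + ej = e(1+j)$ in $eRe$) is literally the same $x$, since $e(1+j)^{-1} = (1+j)^{-1}e$ is that corner inverse; the ``delicate point'' you anticipate about $jx$ being absorbed evaporates once you note that $(1+j)^{-1}$ already lives in $R$ and commutes with $e$. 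So the passage through Lemma~\ref{1.5} and Corollary~\ref{1} is unnecessary: the global unit $1+j$ does all the work.
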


\begin{proof}
$(\Rightarrow)$ Suppose \( a \in R \) is strongly $J^{\#}$-clean. Then, there exists an idempotent \( e^2 = e \in R \) such that \( a - e \in J^{\#}(R) \) and \( ae = ea \). Define \( x := (a + 1 - e)^{-1} e \). Since \( a \), \( e \) and \( (a + 1 - e)^{-1} e \) commute with each other, we have
\[
ax = xa = (a + 1 - e)^{-1} e \cdot a = (a + 1 - e)^{-1} \cdot e \cdot (a + 1 - e) = e.
\]
Thus, \( x^2 a = x \cdot ax = x \cdot e = x \), and \( a - ax = a - e \in J^{\#}(R) \).

$(\Leftarrow)$ Assume there exists \( x \in R \) such that \( x^2 a = x \), \( ax = xa \), and \( a - ax \in J^{\#}(R) \). Put \( f = ax \). Then, one checks that \[ f^2 = ax \cdot ax = ax \cdot xa = a x^2 a = ax = f ,\] so \( f \) is an idempotent. Also, a routine check shows that
\[
af = a \cdot ax = ax \cdot a = f \cdot a,
\]
so \( af = fa \), and \( a - f = a - ax \in J^{\#}(R) \). Therefore, \( a \) is strongly $J^{\#}$-clean in \( R \), as formulated.	
\end{proof}

We now have at our disposal all the machinery necessary to prove the following major statement.

\begin{theorem}\label{reverse}
Given a ring \( R \) and elements \( a, b \in R \), the strong $J^{\#}$-cleanness of \( ab \) guarantees that \( ba \) also exhibits the strongly $J^{\#}$-clean structure.
\end{theorem}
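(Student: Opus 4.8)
The plan is to exploit the criterion established in the Proposition immediately preceding this theorem, which characterizes strong $J^{\#}$-cleanness of an element $a$ by the existence of an $x$ with $x^2a = x$, $ax = xa$, and $a - ax \in J^{\#}(R)$. So first I would apply this criterion to $ab$: since $ab$ is strongly $J^{\#}$-clean, there exists $x \in R$ with $x^2(ab) = x$, $(ab)x = x(ab)$, and $ab - abx \in J^{\#}(R)$. The goal is then to construct an explicit witness $y \in R$ for $ba$ satisfying the same three conditions, from which the criterion gives that $ba$ is strongly $J^{\#}$-clean.

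The natural candidate, by analogy with the classical proofs that strong cleanness and related properties pass from $ab$ to $ba$, is to set $y := bx^2a$. I would then verify the three required identities for this $y$. For the first, $y^2(ba) = bx^2a \cdot bx^2a \cdot ba$, and I would push the inner $ab$ factors around using $x^2(ab) = x$ repeatedly to collapse the expression back to $y = bx^2a$. For the commutativity condition $(ba)y = y(ba)$, I would use the hypothesis $(ab)x = x(ab)$ together with the associativity regroupings $ba \cdot bx^2a$ versus $bx^2a \cdot ba$, again reducing both sides to a common form via the relation $x^2ab = x$. These are the routine computations I would not grind through here, but the structural point is that every manipulation is licensed by the two algebraic identities $x^2(ab) = x$ and $x(ab) = (ab)x$.

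The genuinely substantive step is the third condition, namely showing $ba - (ba)y \in J^{\#}(R)$. Here I would compute $ba - ba \cdot bx^2a$ and try to express it in terms of the quantity $ab - abx \in J^{\#}(R)$, which is the only place the $J^{\#}$-membership hypothesis enters. The key algebraic observation is that $J^{\#}(R)$ is closed under the conjugation-like operation relating $ab$-quantities to $ba$-quantities; more precisely, by Lemma~\ref{close prod}(4) the set $J^{\#}(R)$ is invariant under conjugation by units, and one checks that $ba - (ba)y$ is a suitable transform of a power-associate of $ab - abx$. I expect this to be the main obstacle, because $J^{\#}(R)$ is neither an ideal nor even closed under addition in general, so I cannot simply argue that a linear combination of $J^{\#}$-elements and arbitrary ring elements stays in $J^{\#}(R)$; instead I must produce the transformed element as an honest conjugate (or a sum of a $J^{\#}$-element and a genuine Jacobson-radical element, invoking Lemma~\ref{close prod}(1)) so that its membership in $J^{\#}(R)$ is guaranteed.

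If the direct conjugation route proves awkward, the fallback plan is to work instead through the idempotent picture: from the original decomposition $ab = e + j$ with $e^2 = e$, $ej = je$, $j \in J^{\#}(R)$, one builds an idempotent $f$ for $ba$ (the standard transfer of idempotents along $ab$ versus $ba$) and then shows $ba - f \in J^{\#}(R)$ by exhibiting it as a unit-conjugate of $j$ up to an element of $J(R)$. Either way, the crux is the same single lemma-dependency: membership in $J^{\#}(R)$ must be transferred along the $ab \mapsto ba$ correspondence using invariance under conjugation (Lemma~\ref{close prod}(4)) rather than any closure-under-addition property that $J^{\#}(R)$ lacks.
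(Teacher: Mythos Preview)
Your overall architecture matches the paper exactly: apply the preceding criterion to $ab$ to get $x$ with $x^2(ab)=x$, $(ab)x=x(ab)$, $ab-abx\in J^{\#}(R)$, set $y:=bx^{2}a$, and verify the three conditions for $ba$. The computations for $y^{2}(ba)=y$ and $(ba)y=y(ba)$ are routine, and in the paper they come out just as you sketch. The paper also reduces the third condition to showing $ba-(ba)y=b(1-x)a\in J^{\#}(R)$, exactly the quantity you would arrive at.

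The gap is in how you propose to finish that third step. You point to Lemma~\ref{close prod}(4), invariance of $J^{\#}(R)$ under unit conjugation, as the mechanism transferring $J^{\#}$-membership from $ab(1-x)$ to $b(1-x)a$. But $a$ and $b$ are arbitrary ring elements, not units, so no conjugation is available; neither your main route nor your fallback (``unit-conjugate of $j$ up to an element of $J(R)$'') can be made to work on that basis. The paper instead proves the small standalone claim
\[
cd\in J^{\#}(R)\ \Longrightarrow\ dc\in J^{\#}(R),
\]
using only that $J(R)$ is a two-sided ideal: if $(cd)^{n}\in J(R)$ then $(dc)^{n+1}=d(cd)^{n}c\in J(R)$. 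Applying this with $c=a$ and $d=b(1-x)$ turns the hypothesis $a\cdot b(1-x)=ab-abx\in J^{\#}(R)$ into the desired $b(1-x)\cdot a\in J^{\#}(R)$. So the crux is not conjugation-invariance but this ``$cd\leftrightarrow dc$'' stability of $J^{\#}$, which is more elementary and does not require any unit assumption.
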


\begin{proof}
Set \( \alpha := ab \) and \( \beta := ba \). Then, one verifies that \( a\beta = \alpha a \) and \( \beta b = b\alpha \). Since \( \alpha = ab \) is strongly $J^{\#}$-clean in \( R \), there exists \( x \in R \) such that \( x^2 \alpha = x \), \( \alpha x = x \alpha \), and \( \alpha - \alpha x \in J^{\#}(R) \).

We aim to show that \( \beta = ba \) is also strongly $J^{\#}$-clean. In fact, letting \( y = b x^2 a \), then one computes that
\[
\beta y = ba \cdot b x^2 a = b \alpha x \cdot x a = b x \alpha \cdot x a = b x^2 \cdot \alpha a = b x^2 a \cdot ba = y \beta,
\]
and
\[
y^2 \beta = b x^2 a \cdot b x^2 a \cdot ba = b x^2 \alpha \cdot x^2 \alpha \cdot a = b x^2 a = y.
\]

Now, we verify that \( \beta - \beta y \in J^{\#}(R) \). Indeed, one first noticing that
\[
\beta - \beta y = \beta - \beta \cdot b x^2 a = \beta - b \alpha x^2 a = \beta - b x \alpha a = \beta - b x a = b (1 - x) a.
\]

We next claim that, if \( ab \in J^{\#}(R) \), then \( ba \in J^{\#}(R) \).

\medskip

\textbf{Claim:} If \( ab \in J^{\#}(R) \), then \( ba \in J^{\#}(R) \).

\medskip

To inspect this implication, suppose \( ab \in J^{\#}(R) \). Then, there exists \( n \in \mathbb{N} \) such that \( (ab)^n \in J(R) \), so \( (ba)^{n+1} = b(ab)^na \in J(R) \), as claimed.

Now, since \[ \alpha - \alpha x = ab - abx = ab(1 - x) \in J^{\#}(R) ,\] it follows at once from the Claim that \( \beta - \beta y = b(1 - x)a \in J^{\#}(R) \), as required to complete the whole verification.
\end{proof}

Our automatic consequence is the following one.

\begin{corollary}
Let \( R \) be a ring and \( a, b \in R \). If \( 1 - ab \) is strongly $J^{\#}$-clean, then \( 1 - ba \) is also strongly $J^{\#}$-clean.
\end{corollary}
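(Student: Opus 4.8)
The plan is to reduce the statement to a short formal chain built on two facts already in hand: Theorem~\ref{reverse}, which propagates strong $J^{\#}$-cleanness from $ab$ to $ba$, and the complementation symmetry recorded in the second part of the Example above, namely that an element $c\in R$ is strongly $J^{\#}$-clean if, and only if, $1-c$ is strongly $J^{\#}$-clean. With these in place no fresh computation is needed; the corollary is purely a matter of threading the hypothesis through these two principles.

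Concretely, I would argue in three steps. First, I would translate the hypothesis on $1-ab$ into a statement about $ab$: since $1-ab$ is strongly $J^{\#}$-clean, applying the complementation equivalence to the element $c=1-ab$ shows that $1-c = ab$ is strongly $J^{\#}$-clean. Second, I would invoke Theorem~\ref{reverse} with the roles of the two elements as given, so that the strong $J^{\#}$-cleanness of the product $ab$ guarantees the same property for the reversed product $ba$. Third, I would apply the complementation equivalence once more, now to $c=ba$, to conclude that $1-ba$ is strongly $J^{\#}$-clean. Reading the three steps in order gives exactly the desired implication.

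I do not anticipate any genuine obstacle, since the substantive content — the transfer of the $J^{\#}$-clean structure under reversal of a product — was already dispatched in Theorem~\ref{reverse}, and in particular in the Claim established there that $ab\in J^{\#}(R)$ forces $ba\in J^{\#}(R)$ via the identity $(ba)^{n+1}=b(ab)^n a$. The only point worth stating explicitly is that the complementation equivalence is being used in both directions, first to strip off the $1-(\cdot)$ from the hypothesis and then to reattach it to the conclusion; this is legitimate precisely because that equivalence is an ``if and only if''. Thus the corollary follows at once, and I would present it as an immediate consequence rather than as an independent argument.
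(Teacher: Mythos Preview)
Your proposal is correct and is exactly the approach the paper has in mind: the corollary is stated there as an ``automatic consequence'' of Theorem~\ref{reverse} with no further proof given, and the intended argument is precisely the three-step thread you describe, using the complementation equivalence from the Example together with Theorem~\ref{reverse}.
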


Our next crucial assertion is the following.

\begin{theorem}\label{1.6}
Suppose that $R$ is an arbitrary ring. Then, the following three conditions are equivalent:
	
(1) $R$ is a clean ring with $U(R)=1+J^{\#}(R)$ .
	
(2) $R$ is a $J^{\#}$-clean ring with $U(R)=1+J^{\#}(R)$.
	
(3) $R$ is a $J^{\#}$-clean ring.
\end{theorem}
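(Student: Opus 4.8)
The plan is to run the cycle $(1)\Rightarrow(2)\Rightarrow(3)\Rightarrow(1)$, using the quotient $\overline R:=R/J(R)$ as the main bookkeeping device. The one fact I would record at the outset and invoke repeatedly is that $1+J^{\#}(R)\subseteq U(R)$ in \emph{every} ring: if $j\in J^{\#}(R)$ with $j^{n}\in J(R)$, then the finite sum $v=\sum_{k=0}^{n-1}(-1)^{k}j^{k}$ satisfies $(1+j)v=v(1+j)=1+(-1)^{n-1}j^{n}\in 1+J(R)\subseteq U(R)$, so $1+j$ is two-sided invertible; combined with Lemma~\ref{close prod}(2) this also gives $1-j\in U(R)$. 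The implication $(2)\Rightarrow(3)$ is then immediate, since $(2)$ is exactly $(3)$ together with the extra demand $U(R)=1+J^{\#}(R)$.

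For $(1)\Rightarrow(2)$ the unit hypothesis is common to both statements, so it suffices to deduce $J^{\#}$-cleanness from cleanness plus $U(R)=1+J^{\#}(R)$, and I would pass to $\overline R$. First I record the translation $\overline{J^{\#}(R)}=Nil(\overline R)$: indeed $x\in J^{\#}(R)$ precisely when $\overline x^{\,n}=\overline{x^{n}}=0$ for some $n$. Since units lift modulo $J(R)$, the hypothesis yields $U(\overline R)=\overline{U(R)}=\overline 1+Nil(\overline R)$. Now $\overline R$ is clean, being a homomorphic image of a clean ring, so each $\overline a$ can be written $\overline a=\overline e+\overline v$ with $\overline v=\overline 1+\overline m$ and $\overline m$ nilpotent; hence $\overline a-\overline 1=\overline e+\overline m$, which (letting $\overline a$ vary) says $\overline R$ is nil-clean. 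Because idempotents lift modulo $J(R)$ for clean rings, I can lift the idempotent part of a nil-clean decomposition of $\overline a$ to an $e\in Id(R)$, making $\overline{a-e}$ nilpotent, whence $a-e\in J^{\#}(R)$ and $a=e+(a-e)$ is the required $J^{\#}$-clean decomposition.

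For $(3)\Rightarrow(1)$ I would first obtain cleanness for free: given $a\in R$, apply $J^{\#}$-cleanness to $a+1$ to get $a+1=e+j$; then $a=e+(j-1)$, and $j-1=-(1-j)\in U(R)$ by the opening remark, so $a$ is clean. The inclusion $1+J^{\#}(R)\subseteq U(R)$ is again the opening remark, so to finish I must establish the reverse inclusion $U(R)\subseteq 1+J^{\#}(R)$, equivalently that a $J^{\#}$-clean decomposition $u=e+j$ of a unit $u$ always forces $e=1$ (which then gives $1-u=-j\in J^{\#}(R)$).

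This last point is where I expect the genuine work to lie. In the classical $J$-clean situation the corresponding step is painless: from $u=e+j$ one factors $e=u(1-u^{-1}j)$, and since $J(R)$ is a two-sided ideal one has $u^{-1}j\in J(R)$, forcing $1-u^{-1}j\in U(R)$ and hence $e\in U(R)\cap Id(R)=\{1\}$. Here, however, $J^{\#}(R)$ is only a subset of $R$ and \emph{not} an ideal, so $u^{-1}j$ need not lie in $J^{\#}(R)$ and the factorization argument does not transcribe verbatim; the task is to recover enough control to conclude $e=1$ — for instance by extracting a genuinely commuting decomposition so that Proposition~\ref{unit} applies, or by arguing directly that $\overline e=\overline 1$ in $\overline R$. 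Negotiating the failure of $J^{\#}(R)$ to be an ideal at precisely this step is, I believe, the crux on which the whole equivalence turns.
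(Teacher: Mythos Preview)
Your cycle $(1)\Rightarrow(2)\Rightarrow(3)\Rightarrow(1)$ matches the paper's scheme, but the individual steps differ, and the worry you raise at the end is in fact fatal.

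For $(1)\Rightarrow(2)$ your quotient-and-lifting argument is correct but more elaborate than needed. The paper stays inside $R$: from $U(R)=1+J^{\#}(R)$ one has $-1\in 1+J^{\#}(R)$, so $2\in J^{\#}(R)$, and since $2$ is central this forces $2\in J(R)$. Given a clean decomposition $a=e+u$ with $u=1+j$, one simply rewrites $a=(1-e)+(2e+j)$, where $2e\in J(R)$ makes $2e+j\in J^{\#}(R)$ by Lemma~\ref{close prod}(1). Your route via nil-cleanness of $R/J(R)$ and idempotent lifting works, but buys no extra information.

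For $(3)\Rightarrow(1)$ your extraction of cleanness (decompose $a+1$ rather than $a$) is neat---indeed simpler than the paper's, which first obtains $2\in J(R)$ and writes $a=(1-e)-\bigl(1-(2e+j)\bigr)$. You then correctly isolate the crux: forcing $e=1$ in a decomposition $u=e+j$ of a unit when no commutativity is assumed. The paper handles this step by simply invoking Proposition~\ref{unit}; but that proposition is stated and proved for \emph{strongly} $J^{\#}$-clean units, its proof using $uj=ju$ to get $(u^{-1}j)^{n}=u^{-n}j^{n}\in J(R)$ and hence $1-u^{-1}j\in U(R)$. Without the commuting hypothesis that deduction collapses---exactly the obstacle you flagged---and the paper offers nothing further.

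In fact the implication $(3)\Rightarrow(1)$ is false as literally written. Take $R=M_{2}(\mathbb{Z}_{2})$: here $J(R)=0$, so $J^{\#}(R)=Nil(R)$ and ``$J^{\#}$-clean'' means ``nil-clean'', which $M_{2}(\mathbb{Z}_{2})$ is well known to be. Yet $|U(R)|=|GL_{2}(\mathbb{Z}_{2})|=6$ while $|1+Nil(R)|=4$, so $U(R)\neq 1+J^{\#}(R)$. Your suspicion that this step is ``the crux on which the whole equivalence turns'' is therefore exactly right; the theorem is only valid if ``$J^{\#}$-clean'' in (2) and (3) is read as ``strongly $J^{\#}$-clean'', in which case Proposition~\ref{unit} applies legitimately and the paper's short argument goes through.
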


\begin{proof}
(1) $\Rightarrow$ (2). Assume $a \in R$ is clean, so $a = e + u$ is a clean decomposition. Since $U(R) = 1 + J^{\#}(R)$, we can write $u = 1 + j$ with $j \in J^{\#}(R)$. Thus,
\[
a = (1 - e) + (2e + j).
\]
Appealing to Corollary \ref{3}, we have $2e \in J(R)$ forcing $2e + j \in J^{\#}(R)$ with the aid of Lemma \ref{close prod}(1), as required.

(2) $\Rightarrow$ (3). This implication is pretty clear.

(3) $\Rightarrow$ (1). Assume $R$ is $J^{\#}$-clean. Then, in view of Proposition \ref{unit}, we know $U(R) = 1 + J^{\#}(R)$. Furthermore, if $a \in R$ is $J^{\#}$-clean and $a = e + j$ is a $J^{\#}$-clean decomposition, then
\[
a = (1 - e) - \big(1 - (2e + j)\big).
\]
As in the previous case, we can easily verify that $1 - (2e + j) \in U(R)$, as requested.
\end{proof}

It is worthwhile mentioning that, by what we have proven in point (3), the condition $U(R)=1+J^{\#}(R)$ is {\it not} essential (in point (2)) in order to give a transversal between $J^{\#}$-cleanness and the ordinary cleanness. 

\medskip

The next statement is worthy of recording.

\begin{lemma}\label{cor2}
Any strongly $J^{\#}$-clean ring is strongly clean.
\end{lemma}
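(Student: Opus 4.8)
The plan is to show that every strongly $J^{\#}$-clean decomposition is already a strongly clean decomposition, simply by observing that an element of $J^{\#}(R)$ is, up to sign, a unit when $1$ is added to it. Concretely, suppose $a \in R$ is strongly $J^{\#}$-clean, so $a = e + j$ with $e^2 = e$, $j \in J^{\#}(R)$, and $ej = je$. I would then rewrite this as $a = (1-e) + (2e - 1 + j)$, aiming to exhibit $1-e$ as the idempotent and $2e-1+j$ as the unit of a strongly clean decomposition.

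First I would verify that $2e - 1 + j$ is a unit that commutes with the idempotent $1-e$. Commutativity is immediate since $e$ commutes with $j$ (and hence $1-e$ commutes with $2e-1+j$). For invertibility, I would invoke Corollary~\ref{3}, which gives $2 \in J(R)$, so that $2e \in J(R)$; then by Lemma~\ref{close prod}(1) the element $2e + j$ lies in $J^{\#}(R)$, whence $2e - 1 + j = -\bigl(1 - (2e+j)\bigr)$. Since every element of $J^{\#}(R)$ has a power in $J(R)$, the standard fact that $1 - z$ is a unit whenever $z \in J^{\#}(R)$ (already used repeatedly in the excerpt, e.g.\ in Lemma~\ref{0} and Proposition~\ref{unit}) shows $1 - (2e+j) \in U(R)$, and therefore $2e - 1 + j \in U(R)$.

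Putting these together, $a = (1-e) + (2e-1+j)$ is a sum of an idempotent and a commuting unit, which is exactly a strongly clean decomposition of $a$; since $a$ was arbitrary, $R$ is strongly clean. I expect the only genuine point requiring care is the invertibility of $1 - (2e+j)$: one must confirm that $2e + j \in J^{\#}(R)$ (which rests on $2 \in J(R)$ from Corollary~\ref{3} together with the closure property of Lemma~\ref{close prod}(1)) and that membership in $J^{\#}(R)$ suffices to make $1$ minus the element a unit. Everything else is routine, so this should be a short argument rather than a difficult one.
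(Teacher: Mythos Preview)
Your proposal is correct and follows essentially the same route as the paper: the identical decomposition $a=(1-e)+(2e-1+j)$, with invertibility of $2e-1+j$ deduced from $2\in J(R)$ via Corollary~\ref{3}. The only cosmetic difference is that the paper writes $2e-1+j\in J(R)+U(R)\subseteq U(R)$ directly (splitting as $2e$ plus $-1+j$), whereas you first show $2e+j\in J^{\#}(R)$ via Lemma~\ref{close prod}(1) and then invert $1-(2e+j)$; both arguments are equivalent.
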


\begin{proof}
Let $a \in R$ be strongly $J^{\#}$-clean. Then, there exist $e^2 = e \in R$ and $j \in J^{\#}(R)$ such that $a = e + j$ and $ej = je$. Thus, we write $a = (1 - e) + (2e - 1 + j)$. Now, with Corollary \ref{3} at hand, we have $2 \in J(R)$, so $$2e -1 + j \in J(R) + U(R) \subseteq U(R),$$ as expected.
\end{proof}

What we additionally obtain is the following necessary and sufficient condition.

\begin{corollary}\label{33}
A ring $R$ is strongly $J^{\#}$-clean if, and only if, the following two points are fulfilled:
	
(1) $U(R)=1+J^{\#}(R)$.
	
(2) $R$ is strongly clean.
\end{corollary}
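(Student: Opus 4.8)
The plan is to prove both directions of the biconditional in Corollary~\ref{33}, with the forward direction following almost immediately from results already established and the reverse direction requiring the genuine work.

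For the necessity (that strong $J^{\#}$-cleanness implies (1) and (2)), I would simply invoke the machinery already in place: condition (1) is precisely the content of Corollary~\ref{cor1}, and condition (2) is exactly Lemma~\ref{cor2}. So this direction is a one-line citation of prior results.

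For the sufficiency, I would start with an arbitrary element $a \in R$ and use hypothesis (2) to write a strongly clean decomposition $a = e + u$, where $e^2 = e$, $u \in U(R)$, and $eu = ue$. The aim is to convert this clean expression into a strongly $J^{\#}$-clean one. Mimicking the trick used in the proof of Theorem~\ref{1.6}, I would apply hypothesis (1) to write $u = 1 + j$ with $j \in J^{\#}(R)$, and then regroup as
\[
a = (1 - e) + (2e - 1 + j) = (1-e) + \bigl((2e-1) + j\bigr).
\]
Here $1 - e$ is idempotent and commutes with everything in sight since $e$ commutes with $u$ (hence with $j = u - 1$). The element $2e - 1 + j$ must be shown to lie in $J^{\#}(R)$. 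Note that condition (1) together with Corollary~\ref{3} (whose proof only used $U(R) = 1 + J^{\#}(R)$) gives $2 \in J(R)$, so $2e \in J(R)$, and therefore $2e - 1 + j = (2e) + (j - 1)$; but $j - 1 = -(1 - j)$, and I would need $1 - j$ or rather the whole combination to sit inside $J^{\#}(R)$. The cleaner regrouping is to observe that $-1 \notin J^{\#}(R)$ in general, so I must instead use the alternative grouping $a = (1-e) - \bigl(1 - (2e + j)\bigr)$ as in Theorem~\ref{1.6}, showing $1 - (2e+j) \in U(R)$; but since the target here is a $J^{\#}$-element rather than a unit, the correct move is to write $a = e + j$ directly with $e$ the given idempotent and check $j = u - 1 = a - e \in J^{\#}(R)$ using $u = 1 + j$.

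The main obstacle, and the step deserving the most care, is verifying the commutativity and the $J^{\#}$-membership simultaneously under the \emph{strong} (commuting) clean hypothesis: I must ensure that the idempotent I extract commutes with the chosen $J^{\#}$-element. Since the strongly clean decomposition already supplies $eu = ue$, and $j = u - 1$, it follows that $ej = je$ automatically, so the commutation is inherited for free. Thus the real content reduces to confirming $a - e \in J^{\#}(R)$, which is immediate from $a - e = u - 1 = j \in J^{\#}(R)$ by hypothesis (1). Consequently $a = e + j$ is a strongly $J^{\#}$-clean decomposition, completing the proof.
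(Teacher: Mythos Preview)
Your forward direction is fine, and your overall strategy for the converse is the right one --- it is essentially what the paper does as well (the paper applies the strongly clean decomposition to $-a$ rather than $a$, but this is cosmetic). However, your execution of the converse contains a genuine arithmetic slip that makes the final argument collapse.

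You start from $a = e + u$ with $u = 1 + j$, so $a = e + 1 + j$. Your concluding claim that ``$a - e = u - 1 = j$'' is simply false: $a - e = u = 1 + j$, which is a unit and certainly not in $J^{\#}(R)$. So the decomposition $a = e + j$ does \emph{not} hold, and you cannot keep $e$ as the idempotent. The same slip infects your earlier displayed regrouping: $(1-e) + (2e - 1 + j) = e + j$, which again is not $a$; you have imported the formula from Lemma~\ref{cor2}, where the starting point was $a = e + j$, not $a = e + u$.

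The fix is exactly the idea you sketched and then abandoned: switch to the complementary idempotent $1-e$. From $a = e + 1 + j$ one has
\[
a = (1 - e) + (2e + j),
\]
and $(1-e)$ commutes with $2e + j$ because $ej = je$. Since hypothesis~(1) gives $2 \in J(R)$ (as you correctly observed, the proof of Corollary~\ref{3} uses only $U(R) = 1 + J^{\#}(R)$), we get $2e \in J(R)$, and Lemma~\ref{close prod}(1) then yields $2e + j \in J^{\#}(R)$. That is the strongly $J^{\#}$-clean decomposition you need.
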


\begin{proof}
If $R$ is a strongly $J^{\#}$-clean ring, the claim is concluded from Corollary \ref{cor1} and Lemma \ref{cor2}.

For the converse, choose $a\in R$. Since $R$ is strongly clean, we write $-a=e+u$, where $e\in Id(R)$ and $u\in U(R)$ with $eu=ue$. However, by hypothesis, $u=1-j$ for some $j\in J^{\#}(R)$. Thus, $a=(1-e)+(-j)$ is a strongly $J^{\#}$-clean decomposition for $a$. So, $R$ is a strongly $J^{\#}$-clean ring, as promised.
\end{proof}

The following assertion is somewhat a bit surprising.

\begin{lemma}\label{333}
If $U(R)=1+J^{\#}(R)$, then $R/J(R)$ is a UU ring.
\end{lemma}

\begin{proof}
Suppose that $u + J(R) \in U(R / J(R))$. Thus, $u \in U(R)$, so we can write $u = 1 + j$ with $j \in J^{\#}(R)$. Therefore,
\[
u + J(R) = (1 + j) + J(R) = (1 + J(R)) + (j + J(R)),
\]
where $j + J(R) \in \operatorname{Nil}(R / J(R))$, because $j \in J^{\#}(R)$ gives $j^n \in J(R)$ for some $n$.
\end{proof}

As it is well-known, a ring $R$ is called {\it Boolean} if every element in $R$ is idempotent.

\begin{proposition}\label{booli}
If $R$ is a strongly $J^{\#}$-clean ring, then $R/J(R)$ is Boolean.
\end{proposition}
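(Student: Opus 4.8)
The plan is to transfer everything to the factor ring $\bar R := R/J(R)$ and exploit that it is semiprimitive. First I would note, via Lemma~\ref{2}(2), that $\bar R$ is again strongly $J^{\#}$-clean, and since $J(\bar R)=0$ the set $J^{\#}(\bar R)=\{\bar z:\ \bar z^{\,n}\in J(\bar R)\ \text{for some }n\}$ collapses to $\operatorname{Nil}(\bar R)$. Consequently the strongly $J^{\#}$-clean decompositions in $\bar R$ are exactly strongly nil-clean decompositions: every $\bar a$ can be written $\bar a=\bar e+\bar n$ with $\bar e\in Id(\bar R)$, $\bar n\in\operatorname{Nil}(\bar R)$ and $\bar e\bar n=\bar n\bar e$. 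I would also record that $2\in J(R)$ by Corollary~\ref{3}, so $2=0$ in $\bar R$. It therefore suffices to prove that a semiprimitive strongly $J^{\#}$-clean ring is Boolean.

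The main step is a structural reduction. Since $J(\bar R)=0$, the ring $\bar R$ is a subdirect product of primitive rings $\{R_k\}_{k}$; each $R_k$ is a homomorphic image of $\bar R$, and as the strongly nil-clean decomposition $\bar a=\bar e+\bar n$ descends along any surjection (idempotents, nilpotents and commutativity are all preserved), every $R_k$ is strongly nil-clean. Moreover $R_k$ is primitive, hence $J(R_k)=0$, so once more $J^{\#}(R_k)=\operatorname{Nil}(R_k)$ and each $R_k$ is in fact strongly $J^{\#}$-clean. The heart of the proof is now to show that each such $R_k$ must be $\mathbb{F}_2$.

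To pin down $R_k$ I would invoke the Jacobson density theorem: $R_k$ is isomorphic to a dense ring of linear transformations of a right vector space $V$ over a division ring $D$. If $\dim_D V$ were infinite, $R_k$ would admit a one-sided but not two-sided inverse, contradicting that strongly $J^{\#}$-clean rings are Dedekind-finite (Lemma~\ref{dedkind finite}); if $1<\dim_D V=m<\infty$, then $R_k\cong M_m(D)$, which is excluded by Lemma~\ref{matrix}. Hence $\dim_D V=1$ and $R_k\cong D$ is a division ring. A strongly nil-clean division ring has $\operatorname{Nil}(D)=0$, so every element equals its idempotent part and thus lies in $\{0,1\}$; that is, $R_k\cong\mathbb{F}_2$. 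Therefore $\bar R$ embeds into a product $\prod_k\mathbb{F}_2$ of Boolean rings, and being a subring of a Boolean ring it is itself Boolean.

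The step I expect to be the true obstacle is ruling out nontrivial primitive factors, i.e.\ showing that no nilpotents survive in $\bar R$; everything funnels through that structural dichotomy, and it is exactly here that Lemmas~\ref{matrix} and~\ref{dedkind finite} do the real work. An alternative, more self-contained finish avoiding the density theorem would be to prove directly that $\operatorname{Nil}(\bar R)$ is an ideal (using that strongly nil-clean rings are abelian); being a nil ideal it would sit inside $J(\bar R)=0$, forcing $\operatorname{Nil}(\bar R)=0$, so in $\bar a=\bar e+\bar n$ we get $\bar n=0$ and $\bar a=\bar e$ is idempotent. Establishing abelianness is the sticking point of that alternative, which is why I would favour the density-theoretic route that leans on the matrix and Dedekind-finiteness lemmas already in hand.
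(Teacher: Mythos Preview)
Your reduction to $\bar R=R/J(R)$ is sound: Lemma~\ref{2}(2) gives $\bar R$ strongly $J^{\#}$-clean, and since $J(\bar R)=0$ you correctly get $J^{\#}(\bar R)=\operatorname{Nil}(\bar R)$, so $\bar R$ is strongly nil-clean with $2=0$. Passing to a subdirect product of primitive quotients $R_k$ and noting that strong nil-cleanness descends along surjections is also fine, as are the finite-dimensional case ($R_k\cong M_m(D)$ with $m\ge 2$ is killed by Lemma~\ref{matrix}) and the division-ring case. This is a genuinely different route from the paper, which simply combines Corollary~\ref{33} and Lemma~\ref{333} (so $R/J(R)$ is strongly clean and UU) with \cite[Theorem 4.3]{DL} to conclude Booleanness in one stroke.

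The gap is your treatment of the infinite-dimensional primitive case. The assertion ``if $\dim_D V$ were infinite, $R_k$ would admit a one-sided but not two-sided inverse'' is false for dense subrings in general: for instance $R_k=D\cdot 1+F\subseteq\operatorname{End}_D(V)$, with $F$ the ideal of finite-rank operators, is a primitive ring acting densely on an infinite-dimensional $V$, yet it is Dedekind-finite (any $a=\alpha+f$ with $ab=1$ forces $\alpha\neq 0$, whence $a$ is Fredholm of index $0$ and surjective, hence bijective). So Lemma~\ref{dedkind finite} cannot by itself exclude infinite dimension. A clean repair that stays within your framework: in any strongly nil-clean ring $a-a^2$ is nilpotent (expand $a=e+n$), and if $\dim_D V\ge 2$ density produces $a\in R_k$ with $av_1=v_1+v_2$, $av_2=v_1$; a direct computation in characteristic $2$ gives $(a^2-a)v_i=v_i$, so $a^2-a$ is not nilpotent, a contradiction. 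This single density argument disposes of all cases $\dim_D V\ge 2$ at once, after which your division-ring step finishes the proof. Note also that your proposed alternative (``strongly nil-clean rings are abelian'') is not available: $T_2(\mathbb{F}_2)$ is strongly nil-clean but not abelian.
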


\begin{proof}
An appeal to Corollary \ref{33} and Lemma \ref{333}, combined with \cite[Theorem 4.3]{DL}, allows to deduce that $R/J(R)$ is really a Boolean ring.
\end{proof}

\begin{example} \label{example 1}
Let \( R \) be an arbitrary ring. Then, \( R[x] \) is {\it not} a strongly $J^{\#}$-clean ring.
\end{example}

\begin{proof}
If we assume the opposite that \( R[x] \) is strongly \( J^{\#} \)-clean, then it must also be strongly clean, which leads to a contradiction with well-established facts.
\end{proof}

The next two propositions are quite welcome.

\begin{proposition}
Let \( R \) be a ring. The following two statements are equivalent:

(1) \( R \) is strongly $J$-clean.

(2) \( R \) is strongly $J^{\#}$-clean and \( J^{\#}(R) = J(R)\).
\end{proposition}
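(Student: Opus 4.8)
The plan is to prove the biconditional by unwinding the definitions in each direction, with the key being the interplay between the defining decompositions and the relationship between $J(R)$ and $J^{\#}(R)$.

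First I would establish the direction $(2) \Rightarrow (1)$, which should be the easier one. Assuming $R$ is strongly $J^{\#}$-clean and $J^{\#}(R) = J(R)$, I take an arbitrary $a \in R$ and invoke the strongly $J^{\#}$-clean decomposition $a = e + j$ with $e \in Id(R)$, $j \in J^{\#}(R)$, and $ej = je$. Since by hypothesis $J^{\#}(R) = J(R)$, the element $j$ already lies in $J(R)$, so this same decomposition witnesses that $a$ is strongly $J$-clean. Hence $R$ is strongly $J$-clean directly from the definitions, with no further work required.

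Next I would handle $(1) \Rightarrow (2)$. The strongly $J$-clean property gives immediately that $R$ is strongly $J^{\#}$-clean, since every strongly $J$-clean decomposition $a = e + j$ with $j \in J(R)$ is also a strongly $J^{\#}$-clean decomposition, because $J(R) \subseteq J^{\#}(R)$. The substantive content is therefore to show that the set equality $J^{\#}(R) = J(R)$ holds. The inclusion $J(R) \subseteq J^{\#}(R)$ is automatic from the definition of $J^{\#}(R)$, so the work is entirely in proving $J^{\#}(R) \subseteq J(R)$. For this, I would recall (as noted in the earlier excerpt and in \cite{csj, chensjc}) that strongly $J$-clean rings satisfy the key structural fact that $R/J(R)$ is Boolean and, more to the point, that such rings have the property that every element whose image in $R/J(R)$ is nilpotent must already lie in $J(R)$. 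Concretely, given $z \in J^{\#}(R)$, we have $z^n \in J(R)$ for some $n$, so the image $\bar z$ in $R/J(R)$ satisfies $\bar z^{\,n} = 0$. Since $R/J(R)$ is Boolean (hence reduced, as Boolean rings have no nonzero nilpotents), $\bar z^{\,n} = 0$ forces $\bar z = 0$, i.e. $z \in J(R)$. This closes the reverse inclusion and yields $J^{\#}(R) = J(R)$.

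The main obstacle will be justifying that $R/J(R)$ is Boolean in the strongly $J$-clean case in a self-contained way; I would lean on the analogue of Proposition~\ref{booli}, whose proof shows $R/J(R)$ is Boolean for strongly $J^{\#}$-clean rings, which applies here since $(1)$ already gives strong $J^{\#}$-cleanness. Once Booleanness of $R/J(R)$ is in hand, the reducedness argument killing nilpotents is routine, so the remaining steps are straightforward. The cleanest write-up is thus to prove strong $J^{\#}$-cleanness first, then apply Proposition~\ref{booli} to obtain the reduced quotient, and finally run the nilpotent-lifting argument to force $J^{\#}(R) \subseteq J(R)$.
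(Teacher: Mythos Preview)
Your proof is correct, but the paper takes a more direct and elementary route for the key inclusion $J^{\#}(R)\subseteq J(R)$ in $(1)\Rightarrow(2)$. Rather than invoking the Boolean structure of $R/J(R)$, the paper simply applies the strongly $J$-clean decomposition to an element $a\in J^{\#}(R)$: writing $a=e+j$ with $e^2=e$, $j\in J(R)$, and $ej=je$, one gets $e=a-j\in J^{\#}(R)$ by Lemma~\ref{close prod}(1), whence $e=e^n\in J(R)$ for some $n$, forcing $e=0$ and $a=j\in J(R)$. This uses nothing beyond Lemma~\ref{close prod}(1) and the fact that $J(R)$ contains no nonzero idempotents. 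Your argument instead passes through Proposition~\ref{booli} (or the equivalent \cite[Theorem~2.3]{csj}) to get that $R/J(R)$ is Boolean and hence reduced; this is perfectly valid and in fact anticipates the mechanism behind the later Theorem~\ref{j}, but it imports more machinery than the local statement requires. The paper's argument has the advantage of being entirely self-contained at this point in the exposition.
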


\begin{proof}
\((2) \Rightarrow (1)\). It is evident.

\((1) \Rightarrow (2)\). The strong $J^{\#}$-cleanness of \( R \) is immediate. It just remains to verify that \( J^{\#}(R) \subseteq J(R) \). So, for any \( a \in J^{\#}(R) \), we write by assumption that \( a = e + j \), where \( e^2 = e \in R \), \( j \in J(R) \) and \( ej = je \). Thus, \( e = a - j \in J^{\#}(R) \) by Lemma \ref{close prod}(1) and, consequently, \( e \in J(R) \). Hence, \( e = 0 \), and thus it follows that \( a = j \in J(R) \), as asked for.
\end{proof}

\begin{proposition}\label{jj}
Let \( R \) be a ring. The following two assertions are equivalent:
	
(1) \( R \) is strongly nil-clean.
	
(2) \( R \) is strongly $J^{\#}$-clean and \( J^{\#}(R) = Nil(R)\).
\end{proposition}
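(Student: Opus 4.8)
The plan is to prove the equivalence by establishing each implication separately, using the characterization of strongly nil-clean decompositions together with the structural facts already assembled in the excerpt.

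\medskip

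First I would handle the implication $(1) \Rightarrow (2)$. Assuming $R$ is strongly nil-clean, every element is the sum of a commuting idempotent and a nilpotent; since $Nil(R) \subseteq J^{\#}(R)$ trivially (a nilpotent $z$ satisfies $z^n = 0 \in J(R)$), such a decomposition is at once a strongly $J^{\#}$-clean decomposition, so $R$ is strongly $J^{\#}$-clean. The substantive part is then showing $J^{\#}(R) = Nil(R)$. The inclusion $Nil(R) \subseteq J^{\#}(R)$ is immediate. For the reverse, I would mimic the argument used in the preceding proposition: take $a \in J^{\#}(R)$ and write $a = e + b$ by strong nil-cleanness with $e^2 = e$, $b \in Nil(R)$, $eb = be$. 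Then $e = a - b$ lies in $J^{\#}(R)$ (invoking Lemma~\ref{close prod}(1), since $Nil(R) \subseteq J(R)^{\#}$ and $b$ can be absorbed appropriately), so some power $e^m = e$ lies in $J(R)$, forcing $e = 0$ and hence $a = b \in Nil(R)$.

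\medskip

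For $(2) \Rightarrow (1)$, the hypothesis gives that $R$ is strongly $J^{\#}$-clean with $J^{\#}(R) = Nil(R)$. Then any element $a$ admits a decomposition $a = e + j$ with $e$ idempotent, $j \in J^{\#}(R) = Nil(R)$, and $ej = je$; this is exactly a strongly nil-clean decomposition, so $R$ is strongly nil-clean directly. This direction should be essentially formal.

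\medskip

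The main obstacle I anticipate is the subtle step in $(1) \Rightarrow (2)$ of concluding $e \in J(R)$ from $e \in J^{\#}(R)$, and thence $e = 0$. The point is that an idempotent $e$ lying in $J^{\#}(R)$ must satisfy $e = e^n \in J(R)$ for suitable $n$, and an idempotent in the Jacobson radical is necessarily zero. I would make sure this chain is spelled out cleanly, since it is the engine that collapses $J^{\#}(R)$ down to $Nil(R)$; everything else is a matter of citing Lemma~\ref{close prod} and unwinding the definitions.
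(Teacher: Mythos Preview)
Your overall strategy is sound and deliberately parallels the paper's proof of the preceding proposition; the direction $(2)\Rightarrow(1)$ is exactly as in the paper. There is, however, one genuine slip in your $(1)\Rightarrow(2)$ argument: Lemma~\ref{close prod}(1) only allows you to add an element of $J(R)$ to an element of $J^{\#}(R)$, not an element of $J^{\#}(R)$. Saying ``$Nil(R)\subseteq J^{\#}(R)$ and $b$ can be absorbed appropriately'' does not justify $a-b\in J^{\#}(R)$, because $J^{\#}(R)$ is \emph{not} closed under addition in general. The fix is easy but must be made explicit: from $a=e+b$ with $eb=be$ you get $ab=ba$; choosing $n$ with $a^n\in J(R)$ and $k$ with $b^k=0$, the binomial expansion of $(a-b)^{\,n+k-1}$ has every term either containing $a^n$ (hence in $J(R)$) or containing $b^k$ (hence zero), so $e=a-b\in J^{\#}(R)$, and then your idempotent argument finishes. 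Alternatively you could first quote the known fact that in a strongly nil-clean ring $Nil(R)=J(R)$, after which Lemma~\ref{close prod}(1) does apply verbatim.

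The paper takes a slightly different route to the same inclusion: it writes $-a=e+n$, so that $e+a=-n$ is nilpotent; expanding $(e+a)^k=0$ binomially (using $ea=ae$, which follows from $en=ne$) yields $e=-ac$ for some $c$ commuting with $a$, whence $1-e=1+ac$ is a unit because $a\in J^{\#}(R)$ forces $(ac)^m=a^mc^m\in J(R)$. This gives $e=0$ and $a=-n\in Nil(R)$. Once your gap is patched, your argument is marginally cleaner (it reuses the template of the previous proposition verbatim), while the paper's version avoids any appeal to closure properties of $J^{\#}(R)$ beyond $1+J^{\#}(R)\subseteq U(R)$; both ultimately hinge on the same commuting binomial expansion.
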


\begin{proof}
\((2) \Rightarrow (1)\). It is straightforward.
	
\((1) \Rightarrow (2)\). The strong $J^{\#}$-cleanness of \( R \) is very clear. Next, we only need to show that \( J^{\#}(R) \subseteq Nil(R) \). To that target, given any \( a \in J^{\#}(R) \), we write by hypothesis that \( -a = e + n \), where \( e^2 = e \in R \), \( n^k = 0 \) for some integer \( k \), and \( en = ne \). Thus,
\[
0 = (-n)^k = (e + a)^k.
\]
Furthermore, since \( ea = ae \), exploiting the binomial theorem we have
\[
e = -\left( a^k + \binom{k}{1} ea^{k-1} + \cdots + \binom{k}{k-1} ea \right)
= -a\left( a^{k-1} + \binom{k}{1} ea^{k-2} + \cdots + \binom{k}{k-1} e \right).
\]
It now follows that
\[
1 - e = 1 + a\left( a^{k-1} + \binom{k}{1} ea^{k-2} + \cdots + \binom{k}{k-1} e \right).
\]
Hence, \( 1 - e \in U(R) \), so $e=0$, and thus \( a = -n \in Nil(R) \), as wanted.
\end{proof}

We are now preparing to prove the following central equivalencies.

\begin{theorem}\label{4}
Let \( R \) be a ring. The following three items are equivalent:

(1)  \( R \) is abelian $J^{\#}$-clean.

(2) For every \( a \in R \), there exists a unique idempotent \( e^2 = e \in R \) such that \( a = e + j \), where \( j \in J^{\#}(R) \) (i.e., $R$ is uniquely $J^{\#}$-clean).

(3)  \( R \) is uniquely clean.
\end{theorem}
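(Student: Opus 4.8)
The plan is to prove Theorem~\ref{4} by establishing the cycle of implications $(1)\Rightarrow(2)\Rightarrow(3)\Rightarrow(1)$, using the already-proven fact that strongly $J^{\#}$-clean rings are strongly clean (Lemma~\ref{cor2}) together with the structural results obtained above, especially that $R/J(R)$ is Boolean (Proposition~\ref{booli}) and the characterization in Corollary~\ref{33}. The key recurring tool will be that in an abelian ring all idempotents are central, which converts the commutation hypotheses into genuine algebraic identities and lets the two families of decompositions (clean versus $J^{\#}$-clean) be manipulated freely.

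First I would handle $(1)\Rightarrow(2)$: uniqueness of the idempotent. Given an abelian $J^{\#}$-clean ring, note that abelian $J^{\#}$-clean coincides with abelian strongly $J^{\#}$-clean (commutation is automatic when idempotents are central), so each $a$ has at least one decomposition $a=e+j$. Suppose $a=e_1+j_1=e_2+j_2$ with $e_1,e_2$ idempotent and $j_1,j_2\in J^{\#}(R)$. Then $e_1-e_2=j_2-j_1$; since the $e_i$ are central idempotents I would show both sides lie in $J^{\#}(R)$, pass to the quotient $R/J(R)$ (which is Boolean by Proposition~\ref{booli}), and use that a Boolean ring has no nonzero elements that are simultaneously idempotent and quasi-nilpotent. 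Concretely, the image of $e_1-e_2$ is an idempotent-difference in the Boolean ring $R/J(R)$ whose image is also nilpotent (because $j_i^n\in J(R)$), forcing $e_1\equiv e_2 \pmod{J(R)}$; then the standard idempotent-lifting rigidity in an abelian ring upgrades this to $e_1=e_2$.

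For $(2)\Rightarrow(3)$ I would show unique cleanness. By Corollary~\ref{33} (or directly by Lemma~\ref{cor2}) the ring is strongly clean, hence clean; given $a=e_1+u_1=e_2+u_2$ with units $u_i$, I would rewrite each clean decomposition as a $J^{\#}$-clean one via $U(R)=1+J^{\#}(R)$, namely $u_i=1+j_i$, so that $a=(1-e_i')+\,(\text{element of }J^{\#}(R))$ for suitable idempotents $1-e_i'$; uniqueness of the $J^{\#}$-clean idempotent from (2) then forces the clean idempotents to agree, and subtracting gives $u_1=u_2$. For $(3)\Rightarrow(1)$, I would invoke the well-known fact (from the literature on uniquely clean rings, e.g.\ Nicholson--Zhou) that uniquely clean rings are abelian and satisfy $R/J(R)$ Boolean with $U(R)=1+J(R)$; since $J(R)\subseteq J^{\#}(R)$, this gives $U(R)=1+J^{\#}(R)$, and then Corollary~\ref{33} together with the abelian property yields that $R$ is abelian $J^{\#}$-clean.

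The main obstacle is the uniqueness argument in $(1)\Rightarrow(2)$: the delicate point is promoting the congruence $e_1\equiv e_2\pmod{J(R)}$ to the equality $e_1=e_2$. The cleanest route is to exploit that $J^{\#}(R)\subseteq QN(R)$ and that in an abelian ring two central idempotents congruent modulo the Jacobson radical must coincide, since their difference is both idempotent (up to the relation $(e_1-e_2)^3=e_1-e_2$) and lies in $J^{\#}(R)$, and an idempotent in $J^{\#}(R)$ is necessarily $0$. I would isolate this as the technical heart of the proof, and everything else reduces to bookkeeping with the earlier corollaries.
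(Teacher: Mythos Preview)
Your proposal is correct, and for $(2)\Rightarrow(3)$ and $(3)\Rightarrow(1)$ it matches the paper's argument essentially line for line (the paper also converts clean decompositions of $-a$ into $J^{\#}$-clean ones via $U(R)=1+J^{\#}(R)$, and quotes Nicholson--Zhou for the return direction). One small misfire: in $(2)\Rightarrow(3)$ you cite Corollary~\ref{33}/Lemma~\ref{cor2} to get strong cleanness, but those presuppose \emph{strong} $J^{\#}$-cleanness, which you do not yet know from (2) alone. What you actually need---and what the paper uses---is just Theorem~\ref{1.6}, which from plain $J^{\#}$-cleanness gives both cleanness and $U(R)=1+J^{\#}(R)$; your rewriting then goes through (using $2\in J(R)$, which already follows from $U(R)=1+J^{\#}(R)$).

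Where you genuinely diverge is $(1)\Rightarrow(2)$. The paper argues directly at the element level: writing $a=(1-e)+j=(1-f)+j'$, it observes $a+e\in U(R)$ and $af\in J^{\#}(R)$, and from the identity $(1-e)f=(1-e)f(a+e)^{-1}(af)$ deduces $(1-e)f\bigl(1-(a+e)^{-1}af\bigr)=0$, whence $(1-e)f=0$; symmetry gives $(1-f)e=0$ and so $e=f$. Your route is more structural: you pass to $R/J(R)$, use Proposition~\ref{booli} to see it is Boolean, note that each $j_i$ maps to a nilpotent and hence to $0$ there, conclude $e_1\equiv e_2\pmod{J(R)}$, and then lift via $(e_1-e_2)^3=e_1-e_2$ with $(e_1-e_2)^2-1\in U(R)$. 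Both are valid. The paper's computation is self-contained and avoids invoking the Boolean quotient, while your approach is cleaner conceptually and reuses Proposition~\ref{booli} to do the heavy lifting; it also makes transparent \emph{why} uniqueness holds (all the $J^{\#}$-data dies modulo $J(R)$).
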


\begin{proof}
(1) \( \Rightarrow \) (2). Let \( a \in R \). Suppose there exist idempotents \( e^2 = e \in R \) and \( f^2 = f \in R \) such that \( a = (1 - e) + j \) and \( a = (1 - f) + j' \), where \( j, j' \in J^{\#}(R) \). Then, \( a + e = 1 + j \in U(R) \), and \( af = jf \in J^{\#}(R) \). Since \( R \) is abelian, the elements \( a, e, j, f \) and \( j' \) commute pairwise. Hence, it must be that
\[
(1 - e)f = (1 - e)(a + e)(a + e)^{-1}f = (1 - e)a(a + e)^{-1}f = (1 - e)f(a + e)^{-1}af,
\]
which insures \( (1 - e)f(1 - (a + e)^{-1}af) = 0 \). But, because of the relationships, \( (a + e)^{-1}af = af(a + e)^{-1} \) and \( af \in J^{\#}(R) \), it follows that \( 1 - (a + e)^{-1}af \in U(R) \). Therefore, \( (1 - e)f = 0 \). In a way of similarity, we obtain \( (1 - f)e = 0 \). Finally, \( f = ef = fe = e \).

(2) \( \Rightarrow \) (3). Since \( R \) is \( J^{\#} \)-clean, and consuming Theorem \ref{1.6}, we have that \( R \) is clean and \( U(R) = 1 + J^{\#}(R) \). Let \( a \in R \). We may suppose that \( -a = g + u = f + v \), where \( g, f \) are idempotents, and \( u, v \) are units in \( R \). Thus, we can write \( u = -1 - j \) and \( v = -1 - j' \) for some \( j, j' \in J^{\#}(R) \). Consequently,
\[
a = -g - u = (1 - g) + j \quad \text{and} \quad a = -f - v = (1 - f) + j'.
\]
By hypothesis, it follows that \( g = f \) and, therefore, \( R \) is uniquely clean.

(3) \( \Rightarrow \) (1). In virtue of \cite[Lemma 4]{nzu}, \( R \) is abelian. Since \( R \) is uniquely clean, \( R/J(R) \) is Boolean knowing \cite[Theorem 20]{nzu}. Hence, \[ U(R) = 1 + J(R) \subseteq 1 + J^{\#}(R) \subseteq U(R) ,\] and so \( U(R) = 1 + J^{\#}(R) \). Finally, \( R \) is abelian \( J^{\#} \)-clean owing to Theorem \ref{1.6}.
\end{proof}

As a valuable consequence, we extract:

\begin{corollary}
Let \( R \) be a ring. The following three points are equivalent:

(1) \( R \) is local and  $J^{\#}$-clean.

(2) \( R \) is $J^{\#}$-clean with only trivial idempotents.

(3)  \( R/J(R) \cong \mathbb{Z}_2 \).	
\end{corollary}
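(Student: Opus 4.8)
The plan is to run the cycle $(1)\Rightarrow(2)\Rightarrow(3)\Rightarrow(1)$, converting the $J^{\#}$-clean hypothesis into ordinary cleanness via Theorem~\ref{1.6} at the first opportunity and then leaning on the standard theory of clean and local rings. For $(1)\Rightarrow(2)$ I would only invoke the classical fact that a local ring carries no idempotents besides $0$ and $1$: if $e$ were a nontrivial idempotent, then both $e$ and $1-e$ would be non-units (a unit cannot be a proper idempotent) whose sum is the unit $1$, which is impossible since in a local ring the non-units are closed under addition. The $J^{\#}$-cleanness is simply carried along.

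For $(2)\Rightarrow(3)$ the first move is to apply Theorem~\ref{1.6}, which turns $J^{\#}$-cleanness into the conjunction that $R$ is clean and $U(R)=1+J^{\#}(R)$. Combining cleanness with the assumption $Id(R)=\{0,1\}$ yields that $R$ is local: for an arbitrary $a\in R$ a clean decomposition $a=e+u$ with $e\in\{0,1\}$ forces either $a=u\in U(R)$ or $1-a=-u\in U(R)$, which is precisely the ``$a$ or $1-a$ invertible'' characterization of local rings. Hence $R/J(R)$ is a division ring. On the other hand, $U(R)=1+J^{\#}(R)$ lets Lemma~\ref{333} conclude that $R/J(R)$ is UU, that is, $U(R/J(R))=1+\operatorname{Nil}(R/J(R))$. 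Since a division ring has no nonzero nilpotents, its only unit is $1$, whence $R/J(R)=\{0,1\}\cong\mathbb{Z}_2$.

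For $(3)\Rightarrow(1)$ I would first note that $R/J(R)\cong\mathbb{Z}_2$ is a division ring, so $R$ is local and therefore clean. It then remains to recover $J^{\#}$-cleanness through Theorem~\ref{1.6}, for which I must verify $U(R)=1+J^{\#}(R)$. Passing to the quotient, any $j\in J^{\#}(R)$ has nilpotent image $\bar j$ in $\mathbb{Z}_2$, hence $\bar j=0$ and $j\in J(R)$; this shows $J^{\#}(R)=J(R)$ and gives $1+J^{\#}(R)\subseteq U(R)$. Conversely, for $u\in U(R)$ the image $\bar u$ lies in $U(\mathbb{Z}_2)=\{1\}$, so $u-1\in J(R)=J^{\#}(R)$, and thus $U(R)\subseteq 1+J^{\#}(R)$. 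Theorem~\ref{1.6} then delivers $J^{\#}$-cleanness, and together with locality this is exactly condition $(1)$.

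The only genuinely delicate point is pinning the residue division ring down to $\mathbb{Z}_2$ rather than to an arbitrary division ring: this is where the UU property furnished by Lemma~\ref{333} (equivalently, the direct nilpotency computation in $(3)\Rightarrow(1)$) does the essential work, ruling out any larger field or skew field as $R/J(R)$. Everything else is routine bookkeeping with the well-known equivalences for clean and local rings.
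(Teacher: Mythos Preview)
Your argument is correct, but it follows a genuinely different route from the paper's own proof. The paper handles $(2)\Rightarrow(3)$ and $(3)\Rightarrow(1)$ by observing that a ring with only trivial idempotents is abelian, then invoking Theorem~\ref{4} (abelian $J^{\#}$-clean $\Leftrightarrow$ uniquely clean) together with the external characterization of local uniquely clean rings from \cite[Theorem~15]{nzu}. You instead stay entirely inside the paper's own toolkit: Theorem~\ref{1.6} converts $J^{\#}$-cleanness into ordinary cleanness plus $U(R)=1+J^{\#}(R)$, the trivial-idempotent hypothesis then yields locality by the ``$a$ or $1-a$ is a unit'' criterion, and Lemma~\ref{333} pins $R/J(R)$ down to $\mathbb{Z}_2$ via the UU property. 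For the converse you compute $J^{\#}(R)=J(R)$ and $U(R)=1+J(R)$ directly from $R/J(R)\cong\mathbb{Z}_2$ and feed this back into Theorem~\ref{1.6}. Your approach is more self-contained (no appeal to \cite{nzu}) and arguably more transparent; the paper's approach has the virtue of exhibiting the corollary as an immediate specialization of the uniquely-clean equivalence already established in Theorem~\ref{4}.
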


\begin{proof}
(1)\( \Rightarrow \) (2). It is apparent.
	
(2) \( \Rightarrow \) (3). Since \( 0 \) and \( 1 \) are the only idempotents of \( R \), it follows that \( R \) is abelian. By Theorem \ref{4}, \( R \) is uniquely clean. The rest of the result now  follows from \cite[Theorem 15]{nzu}.
	
(3) \( \Rightarrow \) (1). This follows combining \cite[Theorem 15]{nzu} and Theorem \ref{4}.
\end{proof}

We continue by proving the following two final statements.

\begin{lemma}
Let $R$ be a ring. Then, the following four conditions hold:
	
(1) A division ring $R$ is strongly $J^{\#}$-clean if, and only if, $R \cong \mathbb{Z}_2$.
	
(2) A ring $R$ is local and strongly $J^{\#}$-clean if, and only if, $R/J(R) \cong \mathbb{Z}_2$.
	
(3) A semi-simple ring $R$ is strongly $J^{\#}$-clean if, and only if, $R \cong \mathbb{Z}_2 \times \cdots \times \mathbb{Z}_2$.
	
(4) If $R$ is semi-local and strongly $J^{\#}$-clean, then $R/J(R) \cong \mathbb{Z}_2 \times \cdots \times \mathbb{Z}_2$.
\end{lemma}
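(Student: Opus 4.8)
The plan is to establish the four parts in sequence, treating (1) as the atomic building block and assembling (2)--(4) from it via the product- and quotient-behaviour already recorded. The crux of (1) is a direct computation of $J^{\#}$ inside a division ring $D$: since $J(D)=0$ and a division ring harbours no nonzero nilpotents, one has $J^{\#}(D)=\{z\in D : z^n=0 \text{ for some } n\}=\{0\}$. If $D$ is moreover strongly $J^{\#}$-clean, then Corollary~\ref{cor1} yields $U(D)=1+J^{\#}(D)=\{1\}$; but every nonzero element of a division ring is a unit, so $D\setminus\{0\}=\{1\}$ and hence $D\cong\mathbb{Z}_2$. The reverse implication is immediate, as $\mathbb{Z}_2$ is Boolean with $J^{\#}(\mathbb{Z}_2)=\{0\}$.

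For (2), I would exploit that $R$ local forces $R/J(R)$ to be a division ring. If $R$ is also strongly $J^{\#}$-clean, then Lemma~\ref{2}(2) transfers this property to $R/J(R)$, so part (1) gives $R/J(R)\cong\mathbb{Z}_2$. Conversely, when $R/J(R)\cong\mathbb{Z}_2$ the ring is local, and every $a\in R$ satisfies either $a\in J(R)$ or $a-1\in J(R)$; the corresponding decompositions $a=0+a$ and $a=1+(a-1)$, with the radical summand lying in $J(R)\subseteq J^{\#}(R)$ and commuting with the (central) idempotent, exhibit $a$ as strongly $J^{\#}$-clean.

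Part (3) proceeds through the Wedderburn--Artin structure theorem: a semi-simple ring decomposes as $R\cong\prod_{i=1}^{k}M_{n_i}(D_i)$ with each $D_i$ a division ring. Lemma~\ref{2}(1) forces every factor $M_{n_i}(D_i)$ to be strongly $J^{\#}$-clean, whereupon Lemma~\ref{matrix} rules out $n_i\geq 2$, so each factor is a division ring $D_i$; part (1) then collapses each $D_i$ to $\mathbb{Z}_2$, giving $R\cong\mathbb{Z}_2\times\cdots\times\mathbb{Z}_2$, and the converse again follows from Lemma~\ref{2}(1). Finally, (4) is read off immediately: semi-locality means $R/J(R)$ is semi-simple, Lemma~\ref{2}(2) makes it strongly $J^{\#}$-clean, and part (3) delivers the claimed form.

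I do not anticipate a genuinely hard step here, since the argument is an orchestration of tools already in hand; the only place demanding care is the pivotal $J^{\#}$-computation in a division ring, where one must correctly observe that the vanishing of $J(D)$ together with the absence of nonzero nilpotents collapses $J^{\#}(D)$ to $\{0\}$. Everything downstream rests on that single identity combined with Corollary~\ref{cor1} and Lemma~\ref{matrix}.
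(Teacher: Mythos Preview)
Your proof is correct and follows the same overall architecture as the paper: part (1) via $J^{\#}(D)=\{0\}$ and Corollary~\ref{cor1}, part (3) via Wedderburn--Artin together with Lemma~\ref{2}(1), Lemma~\ref{matrix}, and (1), and part (4) via Lemma~\ref{2}(2) and (3).

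The one genuine difference lies in the converse of (2). The paper appeals to \cite[Theorem 15]{nzu} to deduce that $R/J(R)\cong\mathbb{Z}_2$ forces $R$ to be uniquely clean, and then invokes Theorem~\ref{4} to conclude strong $J^{\#}$-cleanness. Your argument is more direct and self-contained: you simply observe that every element has image $0$ or $1$ in $R/J(R)$, so either $a$ or $a-1$ lies in $J(R)\subseteq J^{\#}(R)$, giving the decomposition by hand. Your route avoids the external reference and the uniquely-clean detour entirely, at the cost of not exhibiting the connection to that theory; the paper's route, by contrast, situates the result within the uniquely-clean framework it has already developed.
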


\begin{proof}
(1) If $R$ is a division ring, then we know that $J^{\#}(R)=0$ and also that $Id(R)=\{0,1\}$. Therefore, $U(R)={1}$. The reciprocality is obvious.
	
(2) If $R$ is local and strongly $J^{\#}$-clean ring, then (1) works. Reciprocally, suppose that $R/J(R) \cong \mathbb{Z}_2$. Then, according to \cite[Theorem 15]{nzu}, $R$ is an uniquely clean ring and, therefore, strongly $J^{\#}$-clean.
	
(3) If $R$ is a semi-simple ring, in accordance with the famous Wedderburn-Artin theorem, we write $R\cong \prod M_{n_i}(D_i)$, where all $D_i$ are division rings. Consequently, based on Lemma \ref{2}(1), Lemma \ref{matrix} and (1), we may freely conclude that $R \cong \mathbb{Z}_2 \times \cdots \times \mathbb{Z}_2$, as expected. The opposite implication is rather obvious, so its evidence is avoided.
	
(4) It is easy for establishment taking in mind (3).
\end{proof}

\begin{lemma} \label{ni idempotent}
Let $R$ be a strongly $J^{\#}$-clean ring. Then, $R$ is local if, and only if, it has no non-trivial idempotents.
\end{lemma}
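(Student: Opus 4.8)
The plan is to treat the two implications separately, the forward one being routine and the reverse one carrying the substance.

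For the forward direction, suppose $R$ is local and let $e \in Id(R)$. Since $e(1-e)=0$ and the non-units of a local ring form a proper ideal (so they cannot sum to the unit $1$), one of $e$ and $1-e$ must be a unit; the former forces $e=1$ and the latter forces $e=0$. Hence $R$ has only the trivial idempotents $0$ and $1$, and no local hypothesis beyond this standard fact is needed.

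For the reverse direction, assume $R$ is strongly $J^{\#}$-clean with $Id(R)=\{0,1\}$. The strategy is to identify $R/J(R)$ with $\mathbb{Z}_2$ and then invoke the earlier lemma characterizing local strongly $J^{\#}$-clean rings (equivalently, simply to note that $\mathbb{Z}_2$ is a division ring, so $R$ is local). First I would record that $R/J(R)$ is Boolean by Proposition~\ref{booli}, and in particular reduced, since a Boolean ring has no nonzero nilpotents. Now take an arbitrary $a \in R$ together with a strongly $J^{\#}$-clean decomposition $a=e+j$, where $e \in Id(R)$, $j \in J^{\#}(R)$ and $ej=je$. As $j \in J^{\#}(R)$ yields $j^n \in J(R)$ for some $n$, the residue $j+J(R)$ is nilpotent in $R/J(R)$, hence equal to $0$ because $R/J(R)$ is reduced; thus $j \in J(R)$ and $a+J(R)=e+J(R)$. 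By hypothesis $e \in \{0,1\}$, so $a+J(R) \in \{0+J(R),\, 1+J(R)\}$, and since $a$ was arbitrary this gives $R/J(R)=\{0+J(R),\, 1+J(R)\} \cong \mathbb{Z}_2$. Consequently $R/J(R)$ is a division ring and $R$ is local.

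The main obstacle is the passage to $j \in J(R)$: it is precisely here that the two hypotheses interact, with the $J^{\#}$-membership forcing $j+J(R)$ to be nilpotent and the strong $J^{\#}$-cleanness, through Proposition~\ref{booli}, making $R/J(R)$ reduced so that this nilpotent vanishes. Once this is secured, the collapse of every residue class onto the image of a trivial idempotent is immediate, and the identification $R/J(R) \cong \mathbb{Z}_2$ follows without any further computation.
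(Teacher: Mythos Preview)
Your proof is correct, but it follows a different path from the paper's. The paper argues directly: with $Id(R)=\{0,1\}$, every $a\in R$ decomposes as $a=e+j$ with $e\in\{0,1\}$ and $j\in J^{\#}(R)$, so either $a\in J^{\#}(R)$ or $a-1\in J^{\#}(R)$; this yields $R=J^{\#}(R)\cup U(R)$, and the paper then observes (elementarily) that this equality characterizes local rings, since $1+J^{\#}(R)\subseteq U(R)$ forces one of $a$, $1-a$ to be a unit for every $a$. Your route instead passes through Proposition~\ref{booli}: you use that $R/J(R)$ is Boolean, hence reduced, to push each $j\in J^{\#}(R)$ into $J(R)$, and then read off $R/J(R)\cong\mathbb{Z}_2$. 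Both arguments are short; the paper's is more self-contained (Proposition~\ref{booli} itself rests on Corollary~\ref{33}, Lemma~\ref{333}, and an external reference to \cite{DL}), while yours has the small bonus of explicitly identifying the residue field and of showing along the way that in this situation $J^{\#}(R)=J(R)$.
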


\begin{proof}
If $R$ is local, then obviously $R$ has no non-trivial idempotents. Now, the treat the reverse, suppose that $R$ has no non-trivial idempotents. Then, for every $a \in R$, we have either $a \in J^{\#}(R)$ or $a-1 \in J^{\#}(R)$, which, in turn, ensures that $R = J^{\#}(R) \cup U(R)$. Moreover, it can elementarily be proven that $R$ is local if, and only if, $R = J^{\#}(R) \cup U(R)$, as we need.
\end{proof}

\section{Group Rings}

Let $R$ be an arbitrary ring and $G$ an arbitrary group. The notation $RG$ commonly refers to the \textit{group ring} of $G$ over $R$. In this context, the symbol $\Delta(RG)$ stands for the kernel of the classical \textit{augmentation map} $\varepsilon: RG \to R$, which is defined by the equality
\[
\varepsilon\left(\sum_{g \in G} a_g g\right) = \sum_{g \in G} a_g.
\]
This kernel is known as the \textit{augmentation ideal} of $RG$.

\medskip

Furthermore, a group $G$ is called a \textit{$p$-group} if every element of $G$ has finite order that is a power of a given prime number $p$. Moreover, a group $G$ is said to be \textit{locally finite} if each finitely generated subgroup of $G$ is finite.

\medskip

Our first instrument necessary to attack the chief result is the following.

\begin{lemma}\label{prop group ring}
Let $R$ be a strongly $J^{\#}$-clean ring, and let $G$ be a locally finite $2$-group. Then, the following three statements hold:

(1) $\Delta(RG) \subseteq J(RG)$.
		
(2) The quotient ring $RG/J(RG)$ is Boolean.
		
(3) $J(RG) = \{x \in RG : \varepsilon(x) \in J(R)\}$.
\end{lemma}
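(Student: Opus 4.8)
The three parts are not independent: the whole statement rests on (1), from which (2) and (3) drop out quickly, so I would organize the argument around proving $\Delta(RG)\subseteq J(RG)$ first. Indeed, the augmentation map $\varepsilon$ induces $RG/\Delta(RG)\cong R$, so once $\Delta(RG)\subseteq J(RG)$ is known, the standard identity $J(S/I)=J(S)/I$ for an ideal $I\subseteq J(S)$ (applied with $S=RG$, $I=\Delta(RG)$) yields $J(RG)/\Delta(RG)\cong J(R)$ under this isomorphism. This is exactly (3): $J(RG)=\varepsilon^{-1}(J(R))=\{x\in RG:\varepsilon(x)\in J(R)\}$. Then $RG/J(RG)\cong R/J(R)$, which is Boolean by Proposition \ref{booli}, giving (2). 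Thus all the genuine work is in (1).

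For (1) I would first reduce the containment to the case of a finite group. Since $J(RG)$ is a two-sided ideal, hence an $R$-sub-bimodule, and $\Delta(RG)$ is the $R$-span of $\{g-1:g\in G\}$, it suffices to prove $g-1\in J(RG)$ for each $g$; by the radical criterion this means $1-a(g-1)$ is a unit for every $a\in RG$. Given such an $a$, its support is finite, so $a$ and $g$ lie in $RH$ for $H:=\langle\operatorname{supp}(a),g\rangle$, which is finitely generated, hence finite since $G$ is a locally finite $2$-group. As a unit of $RH$ is a unit of $RG$, it is enough to show $\Delta(RH)\subseteq J(RH)$ for every finite $2$-subgroup $H$; this is precisely where local finiteness is used.

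For a finite $2$-group $H$ (recall $2\in J(R)$ by Corollary \ref{3}) I would combine two facts. First, $\bar R:=R/J(R)$ is Boolean by Proposition \ref{booli}, hence commutative of characteristic $2$ and a subdirect product of copies of $\mathbb{Z}_2$; since $\Delta(\mathbb{Z}_2 H)$ is nilpotent of some index $N$ depending only on $H$ (the classical modular fact that $\mathbb{Z}_2 H$ is local with radical $\Delta(\mathbb{Z}_2 H)$), the embedding $\bar R H\hookrightarrow\prod\mathbb{Z}_2 H$ forces $\Delta(\bar R H)^N=0$, whence $\Delta(RH)^N\subseteq J(R)\cdot RH$. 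Second, $J(R)\cdot RH\subseteq J(RH)$: viewing $RH$ as a free right $R$-module with basis $H$, left multiplication embeds $RH$ into $M_{|H|}(R)$ and carries $J(R)\cdot RH$ into $M_{|H|}(J(R))=J(M_{|H|}(R))$, so $1+x$ is right invertible in $RH$ for every $x\in J(R)\cdot RH$; the symmetric argument with the right regular representation gives a left inverse, so $1+x$ is a unit, and as $J(R)\cdot RH$ is a two-sided ideal it lies in $J(RH)$. Combining, $\Delta(RH)^N\subseteq J(RH)$; since $RH/J(RH)$ is semiprimitive and so has no nonzero nilpotent ideals, the image of the ideal $\Delta(RH)$ must vanish, i.e. $\Delta(RH)\subseteq J(RH)$, completing (1).

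The \emph{main obstacle} is this finite $2$-group step, and within it the passage over a possibly noncommutative $R$: nilpotency of the augmentation ideal is only available after reducing modulo $J(R)$, where Boolean-ness makes everything commutative and of characteristic $2$, so one is left to control the error term $J(R)\cdot RH$, and the inclusion $J(R)\cdot RH\subseteq J(RH)$ must be argued by hand via the regular representations and $J(M_{|H|}(R))=M_{|H|}(J(R))$ rather than taken for granted. Once both ingredients are in place, stripping the exponent $N$ by semiprimitivity of $RH/J(RH)$ and then assembling the local pieces is routine, and parts (2) and (3) follow formally as indicated above.
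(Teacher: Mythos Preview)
Your argument is correct, but it takes a genuinely different route from the paper. The paper dispatches (1) in one line by invoking \cite[Lemma~2]{zucc}, which gives $\Delta(RG)\subseteq J(RG)$ directly from $2\in J(R)$ and the assumption that $G$ is a locally finite $2$-group; you instead unfold this completely, reducing to finite $2$-subgroups and combining nilpotency of $\Delta(\mathbb{Z}_2H)$ with the matrix-ring argument for $J(R)\cdot RH\subseteq J(RH)$. For (2) the paper uses Connell's result $J(RG)\cap R=J(R)$ (valid for locally finite $G$) to get $RG/J(RG)\cong R/J(R)$, and then derives (3) from (2) by noting that Booleanness forces $1-g\in J(RG)$; you reverse the order, deducing (3) straight from (1) via the correspondence $J(S/I)=J(S)/I$ for $I\subseteq J(S)$, and then reading (2) off from $RG/J(RG)\cong R/J(R)$.

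What each buys: the paper's proof is much shorter but leans on two external references (Zhou and Connell), whereas your proof is self-contained and makes the logical dependence of (2) and (3) on (1) completely transparent---in particular you avoid needing Connell's $J(RG)\cap R=J(R)$ altogether. The trade-off is that your finite-$H$ step, while standard, has several moving parts (subdirect embedding into $\prod\mathbb{Z}_2H$, the regular representation into $M_{|H|}(R)$, stripping the exponent via semiprimitivity) that the paper simply outsources.
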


\begin{proof}
(1) Corollary~\ref{3} informs us that $2 \in J(R)$. Thus, applying \cite[Lemma 2]{zucc}, it follows that $\Delta(RG) \subseteq J(RG)$.
	
(2) Let $x = \sum_{g \in G} a_g g \in RG$. Then, we can write
\[
	x = \sum_{g \in G} -a_g(1 - g) + \sum_{g \in G} a_g \in \Delta(RG) + R.
\]
Hence, by (1), it follows that $RG = J(RG) + R$. Moreover, \cite[Proposition 9]{conel} applies to get that $J(RG) \cap R = J(R)$, so that
\[
	RG/J(RG) \cong (R + J(RG))/J(RG) \cong R/J(R).
\]
Since $R/J(R)$ is Boolean invoking Proposition \ref{booli}, we conclude that $RG/J(RG)$ is Boolean as well.
	
(3) Define the set $A := \{x \in RG : \varepsilon(x) \in J(R)\}$. As $\varepsilon$ is surjective and $J(R)$ is an ideal, we deduce $\varepsilon(J(RG)) \subseteq J(R)$, thus giving $J(RG) \subseteq A$.
	
Now, take $x = \sum_{g \in G} a_g g \in A$, so $\varepsilon(x) \in J(R)$. From part (2), we know that $RG/J(RG)$ is Boolean, whence $1 - g \in J(RG)$ for every $g \in G$. Therefore,
\[
	x = \sum_{g \in G} a_g(g - 1) + \sum_{g \in G} a_g = \sum_{g \in G} a_g(g - 1) + \varepsilon(x) \in J(RG),
\]
which shows that $A \subseteq J(RG)$. This proves $J(RG) = A$, as claimed.

Finally, since $G$ is locally finite, the usage of \cite[Proposition 9]{conel} justifies the identity $J(RG) \cap R = J(R)$, as stated.
\end{proof}

We now have all the machinery needed to prove the following main result of this section.

\begin{theorem}\label{1.7} Suppose that $R$ is a ring and $G$ is a group. If the group ring $RG$ is strongly $J^{\#}$-clean, then $R$ is strongly $J^{\#}$-clean and $G$ is a $2$-group.
\end{theorem}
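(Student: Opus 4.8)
The plan is to prove the two conclusions separately, as they require quite different tools. For the assertion that $R$ itself is strongly $J^{\#}$-clean, I would exploit the augmentation map $\varepsilon : RG \to R$, which is a surjective ring homomorphism splitting the inclusion $R \hookrightarrow RG$. Given $a \in R$, regard it as $a\cdot 1 \in RG$ and take a strongly $J^{\#}$-clean decomposition $a = e + j$ in $RG$, with $e^2 = e$, $j \in J^{\#}(RG)$ and $ej = je$. Applying $\varepsilon$ yields
\[
a = \varepsilon(a) = \varepsilon(e) + \varepsilon(j),
\]
where $\varepsilon(e)$ is an idempotent of $R$ and the two summands still commute. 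Since $j^n \in J(RG)$ for some $n$, we get $\varepsilon(j)^n = \varepsilon(j^n) \in \varepsilon(J(RG)) \subseteq J(R)$, using that surjective homomorphisms carry the Jacobson radical into the Jacobson radical; hence $\varepsilon(j) \in J^{\#}(R)$. This exhibits a strongly $J^{\#}$-clean decomposition of $a$, so $R$ is strongly $J^{\#}$-clean. I expect this part to be entirely routine.

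For the claim that $G$ is a $2$-group, I would first extract the structural constraints already available. By Corollary~\ref{3} applied to $RG$ we have $2 \in J(RG)$, and by Proposition~\ref{booli} the quotient $RG/J(RG)$ is Boolean. Now each $g \in G$ is a unit of $RG$, so its image in the Boolean ring $RG/J(RG)$ is simultaneously a unit and an idempotent, forcing that image to equal $1$. Therefore $g - 1 \in J(RG)$ for every $g \in G$, i.e.\ $\Delta(RG) \subseteq J(RG)$.

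Next I would dispose of the finite-order elements. If $g \in G$ has finite order $n$, then each $g^{k} - 1$ lies in $J(RG)$, so modulo $J(RG)$ we have $\sum_{k=0}^{n-1} g^{k} \equiv n$. When $n$ is odd, the relation $2 \in J(RG)$ gives $n \in 1 + J(RG) \subseteq U(RG)$, whence $\sum_{k=0}^{n-1} g^{k} \in U(RG)$; combining this with the identity
\[
(g-1)\sum_{k=0}^{n-1} g^{k} = g^{n} - 1 = 0
\]
forces $g - 1 = 0$, i.e.\ $g = 1$. Writing the order of an arbitrary finite-order element as $2^{a}m$ with $m$ odd and applying this to $g^{2^{a}}$ (which has odd order $m$) shows $g^{2^{a}} = 1$; hence every torsion element of $G$ has order a power of $2$.

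The main obstacle is ruling out elements of infinite order, since the semisimple quotient $RG/J(RG)$ collapses every $g$ to $1$ and therefore cannot detect the order of $g$: the torsion conclusion must come from the structure of the radical itself. My plan here is to reduce to a group algebra over the prime field. Assuming $R \neq 0$, the Boolean ring $R/J(R)$ admits a surjection onto $\mathbb{Z}_2$, which extends to a surjective homomorphism $\Phi : RG \twoheadrightarrow \mathbb{Z}_2 G$; since $\Phi(J(RG)) \subseteq J(\mathbb{Z}_2 G)$ and $\Phi$ fixes group elements, we obtain $g - 1 \in J(\mathbb{Z}_2 G)$ for all $g$, i.e.\ $\Delta(\mathbb{Z}_2 G) \subseteq J(\mathbb{Z}_2 G)$. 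The remaining point is the genuinely group-ring-theoretic fact that, over a field of characteristic $2$, the containment $\Delta(\mathbb{Z}_2 G)\subseteq J(\mathbb{Z}_2 G)$ forces $G$ to be a (locally finite) $2$-group; this is the converse direction of the radical estimates used in Lemma~\ref{prop group ring} and, in the spirit of \cite{zucc, conel}, is where the real work lies. Granting it, $G$ is a torsion group all of whose elements have $2$-power order, completing the proof.
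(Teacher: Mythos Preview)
Your proposal is correct, and it differs from the paper's argument in instructive ways.

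For the first conclusion, the paper obtains $\Delta(RG)\subseteq J(RG)$ (via the Booleanness of $RG/J(RG)$) and then invokes Lemma~\ref{2}(2) on the quotient $RG/\Delta(RG)\cong R$. Your route through the augmentation map is more direct: it needs only the general fact that a surjective ring homomorphism carries the Jacobson radical into the Jacobson radical, and it avoids any appeal to the Boolean structure at this stage.

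For the second conclusion, the paper simply cites \cite[Proposition~15]{conel} to deduce that $G$ is a $p$-group with $p\in J(R)$, and then pins down $p=2$ from $2\in J(R)$. Your approach unpacks more of this: the elementary argument disposing of odd-order torsion via $(g-1)\sum_{k=0}^{n-1}g^{k}=0$ is clean and self-contained, and the reduction to $\mathbb{Z}_2G$ through a surjection $R/J(R)\twoheadrightarrow\mathbb{Z}_2$ is a nice way to isolate the hard step. That hard step---showing $\Delta(\mathbb{Z}_2G)\subseteq J(\mathbb{Z}_2G)$ forces $G$ to be a (locally finite) $2$-group, equivalently that $\mathbb{Z}_2G$ local implies $G$ is a $2$-group---is exactly the content of the Connell result the paper cites, so in the end both arguments rest on the same external theorem. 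Your version trades a single citation for a more transparent picture of where the genuine depth lies; the paper's version is shorter.
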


\begin{proof}
One sees that Lemma~\ref{prop group ring} guarantees the inclusion $\varepsilon(RG) \subseteq J(RG)$. Now, since both relations $RG/\Delta(RG) \cong R$ and $\Delta(RG)\subseteq J(RG)$ are true, Lemma~\ref{2} discovers that the quotient $RG/J(RG)$ induces a strongly $J^{\#}$-clean structure on $R$. Therefore, $R$ is strongly $J^{\#}$-clean.
	
On the other hand, referring to \cite[Proposition 15]{conel}, we derive that $G$ must be a $p$-group for some prime $p \in J(R)$. However, Corollary~\ref{3} asserts that $2 \in J(R)$, and since two distinct primes cannot both lie in the Jacobson radical of a ring, it follows at once that $p = 2$. Hence, $G$ is a $2$-group.
\end{proof}

We continue our work with the following.

\begin{lemma}
Let $R$ be a semi-abelian ring, and let $G$ be a locally finite group. Then, the following two statements are equivalent:

(1) $RG$ is a strongly $J^{\#}$-clean ring.

(2) $R$ is a strongly $J^{\#}$-clean ring and $G$ is a $2$-group.
\end{lemma}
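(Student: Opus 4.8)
The plan is to read the easy direction straight off Theorem~\ref{1.7} and to concentrate all the effort on the converse. Indeed, (1)$\Rightarrow$(2) is exactly Theorem~\ref{1.7}: if $RG$ is strongly $J^{\#}$-clean, then $R$ is strongly $J^{\#}$-clean and $G$ is a $2$-group, and this uses neither that $R$ is semi-abelian nor that $G$ is locally finite. So the entire content of the lemma lies in (2)$\Rightarrow$(1), and both additional hypotheses will be spent there.

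For (2)$\Rightarrow$(1), I would first exploit the radical picture supplied by Lemma~\ref{prop group ring}, which is available precisely because $R$ is strongly $J^{\#}$-clean and $G$ is a locally finite $2$-group: it gives $\Delta(RG)\subseteq J(RG)$ and, decisively, that $RG/J(RG)$ is Boolean. From this Boolean (hence reduced) quotient two things follow. First, $J^{\#}(RG)=J(RG)$, since any $x$ with $x^{n}\in J(RG)$ has nilpotent, therefore zero, image in $RG/J(RG)$. Second, $U(RG)=1+J^{\#}(RG)$: the inclusion $1+J^{\#}(RG)\subseteq U(RG)$ is automatic (as noted in Corollary~\ref{cor1}), while for $u\in U(RG)$ the image $\bar u$ is a unit of a Boolean ring and hence equals $\bar 1$, forcing $u-1\in J(RG)\subseteq J^{\#}(RG)$. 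Having verified condition~(1) of Corollary~\ref{33}, the whole problem reduces to the single claim that $RG$ is strongly clean.

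Proving that $RG$ is strongly clean is the real obstacle, and this is where the semi-abelian hypothesis is consumed. The plan is to lift idempotents of the Boolean ring $RG/J(RG)$ to \emph{commuting} idempotents of $RG$: for $x\in RG$ the image $\bar x$ is idempotent, and I would seek an idempotent $e\in RG$ with $x-e\in J(RG)$ and $ex=xe$. Given such a commuting lift, the rewriting $x=(1-e)+\bigl(2e-1+(x-e)\bigr)$ together with $2\in J(RG)$ (immediate, as $RG/J(RG)$ is Boolean) turns the strongly $J$-clean datum into a genuine strongly clean decomposition, exactly as in Lemma~\ref{cor2}. The subtle point is the commutation of the lifted idempotent with $x$: idempotents lift automatically modulo an ideal contained in $J(RG)$, but commutativity is not for free, and it is the semi-abelian structure of $R$---carried into $RG$ via $\Delta(RG)\subseteq J(RG)$ and $RG/\Delta(RG)\cong R$---that controls how idempotents sit inside $RG$. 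I would therefore close this step by invoking the known transfer criterion for strong cleanness of group rings of locally finite $2$-groups over semi-abelian base rings. Once $RG$ is strongly clean, Corollary~\ref{33} at once yields that $RG$ is strongly $J^{\#}$-clean, finishing (2)$\Rightarrow$(1). I expect this transfer of strong cleanness to the group ring, rather than any radical computation, to be the genuine difficulty of the proof.
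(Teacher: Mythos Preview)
Your overall architecture matches the paper exactly: (1)$\Rightarrow$(2) is Theorem~\ref{1.7}, and for (2)$\Rightarrow$(1) you aim at Corollary~\ref{33} by first getting $RG/J(RG)$ Boolean from Lemma~\ref{prop group ring} (hence $U(RG)=1+J^{\#}(RG)$) and then arguing that $RG$ is strongly clean. That framing is correct, and your reduction to ``$RG$ strongly clean'' is precisely the decisive step in the paper as well.

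The gap is in how you establish strong cleanness. You sketch an idempotent-lifting argument, correctly flag that the commutation of the lifted idempotent with $x$ is the delicate point, and then defer to ``the known transfer criterion for strong cleanness of group rings of locally finite $2$-groups over semi-abelian base rings.'' That criterion is never named, and in fact the paper does \emph{not} argue via idempotent lifting at all. Instead, it uses the semi-abelian hypothesis in a completely different way: since $R/J(R)$ is Boolean, one deduces that for every prime ideal $P$ of $R$ the quotient $R/P$ is Boolean, and then \cite[Theorem~3.3]{wchen} (which is where ``semi-abelian'' is actually spent) forces $R/P$ to be local, hence $R/P\cong\mathbb{F}_2$. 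With all prime quotients equal to $\mathbb{F}_2$ and $G$ a locally finite $2$-group, \cite[Theorem~3.3]{chin} gives that $RG$ is strongly $\pi$-regular, and strongly $\pi$-regular rings are strongly clean. So the semi-abelian hypothesis is consumed at the level of prime quotients of $R$, not by controlling idempotents inside $RG$; your proposed route through commuting lifts is not how the argument closes, and as written it remains a promissory note rather than a proof.
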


\begin{proof}
(1) $\Rightarrow$ (2). This implication follows directly from Theorem~\ref{1.7}.
	
(2) $\Rightarrow$ (1). Assume that $R$ is a strongly $J^{\#}$-clean ring. Thus, for every prime ideal $P$ of $R$, we have the isomorphism
\[
	R/P \cong (R/J(R))/(P/J(R)).
\]
Since $R/J(R)$ is Boolean, it follows that each residue field $R/P$ is Boolean too. In particular, the factor-ring $R/P$ is a Boolean ring and, according to \cite[Theorem 3.3]{wchen}, this quotient-ring is also local. So, $R/P$ must be isomorphic to the field $\mathbb{F}_2$.
	
But, utilizing \cite[Theorem 3.3]{chin}, we conclude that the group ring $RG$ is strongly $\pi$-regular and thus strongly clean. Furthermore, since $RG/J(RG) \cong R/J(R)$ is Boolean, it follows that $RG$ is a UJ-ring. Now, as Corollary~\ref{33} claims, $RG$ is strongly $J^{\#}$-clean, as desired.
\end{proof}

It follows from the preceding lemma that, if $R$ is simultaneously an abelian and strongly $J^{\#}$-clean ring, and $G$ is a locally finite $2$-group, then the group ring $RG$ is also strongly $J^{\#}$-clean.

Nevertheless, we now proceed to establish this result through an alternative approach in what follows.

\begin{lemma}
Let $R$ be an abelian ring, and let $G$ be a locally finite group. Then, the following two issues are equivalent:

(1) $RG$ is a strongly $J^{\#}$-clean ring.

(2) $R$ is a strongly $J^{\#}$-clean ring and $G$ is a $2$-group.
\end{lemma}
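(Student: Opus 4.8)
The plan is to dispose of the implication $(1)\Rightarrow(2)$ immediately by quoting Theorem~\ref{1.7}, exactly as in the two preceding lemmas, and to concentrate all of the effort on the reverse implication $(2)\Rightarrow(1)$. The distinctive feature of this alternative argument is that, instead of routing through strong $\pi$-regularity and the external references used for the semi-abelian case, I would manufacture a strongly clean decomposition of an arbitrary element of $RG$ \emph{directly}, lifting the idempotent out of $R$ and exploiting the fact that in a group ring the coefficients commute with the group elements.

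For $(2)\Rightarrow(1)$, I would first record the output of Lemma~\ref{prop group ring}: since $G$ is a locally finite $2$-group and $R$ is strongly $J^{\#}$-clean, the augmentation induces an isomorphism $RG/J(RG)\cong R/J(R)$, the right-hand ring being Boolean, and moreover $\Delta(RG)\subseteq J(RG)$. Because a Boolean ring has only the trivial unit, every $u\in U(RG)$ satisfies $\overline{u}=\overline{1}$, so that $U(RG)=1+J(RG)\subseteq 1+J^{\#}(RG)\subseteq U(RG)$ and therefore $U(RG)=1+J^{\#}(RG)$. This secures the first hypothesis of Corollary~\ref{33}.

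The core of the proof is to check that $RG$ is strongly clean, which is the second hypothesis of Corollary~\ref{33}. Given $\alpha\in RG$, put $a:=\varepsilon(\alpha)\in R$. As $R$ is strongly $J^{\#}$-clean it is strongly clean by Lemma~\ref{cor2}, so I can write $a=e+u$ with $e^2=e$, $u\in U(R)$ and $eu=ue$; reducing modulo $J(R)$ and using triviality of units there gives $\overline{e}=\overline{a}-\overline{1}=\overline{\varepsilon(\alpha)-1}$. I then regard $e\in R\subseteq RG$. Since $R$ is abelian, $e$ is central in $R$, and since every element of $R$ commutes with every element of $G$ inside $RG$, the idempotent $e$ is in fact \emph{central in $RG$}. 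Setting $v:=\alpha-e$ and using that the augmentation isomorphism identifies $\overline{\alpha}$ with $\overline{\varepsilon(\alpha)}$, I obtain $\overline{v}=\overline{\varepsilon(\alpha)}-\overline{e}=\overline{1}$ in $RG/J(RG)$, whence $v\in 1+J(RG)\subseteq U(RG)$. Centrality of $e$ makes $ev=ve$ automatic, so $\alpha=e+v$ is a strongly clean decomposition and $RG$ is strongly clean. Combining the two established hypotheses, Corollary~\ref{33} yields that $RG$ is strongly $J^{\#}$-clean.

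The main obstacle, and precisely the point at which the abelian hypothesis is indispensable, is the centrality of the lifted idempotent: this is what upgrades a clean decomposition to a \emph{strongly} clean one at no extra cost and lets the argument sidestep the strongly $\pi$-regular machinery of the previous lemma. The remaining verifications are minor and I would treat them as routine: that $1+J^{\#}(RG)\subseteq U(RG)$ in any ring, since $j^n\in J(RG)$ forces $1+j$ to be invertible; and that the isomorphism of Lemma~\ref{prop group ring}(2) really carries $\overline{\alpha}$ to $\overline{\varepsilon(\alpha)}$, which is clear because $\alpha\equiv\varepsilon(\alpha)$ modulo $\Delta(RG)\subseteq J(RG)$.
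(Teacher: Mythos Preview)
Your proof is correct. Both it and the paper's hinge on the same two ingredients: Lemma~\ref{prop group ring} supplies the Boolean quotient $RG/J(RG)\cong R/J(R)$ together with $\Delta(RG)\subseteq J(RG)$, and abelianness makes the idempotent lifted from $R$ central in $RG$. The packaging differs. The paper writes an arbitrary $f\in RG$ as $f=a+j$ with $a\in R$ and $j\in J(RG)$, takes the strongly $J^{\#}$-clean decomposition $a=e+x$ in $R$, shows $x\in J(RG)$ (from $x(1-x)\in J(RG)$ and $1-x\in U(R)$), and reads off $f=e+(x+j)$ as the desired strongly $J^{\#}$-clean expression directly. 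You instead verify the two hypotheses of Corollary~\ref{33} separately, first deducing $U(RG)=1+J^{\#}(RG)$ from the Boolean quotient, then producing a strongly clean decomposition $\alpha=e+v$ with $v\in 1+J(RG)\subseteq U(RG)$. Your route is a touch more modular and makes the role of Corollary~\ref{33} explicit; the paper's is marginally more direct in that it writes down the final $J^{\#}$-clean (indeed $J$-clean) decomposition without the detour through strong cleanness.
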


\begin{proof}
(1) $\Rightarrow$ (2). This implication follows directly from Theorem~\ref{1.7}.
	
(2) $\Rightarrow$ (1). Following absolutely the same argument as in part (2) of Lemma~\ref{prop group ring}, we observe that $RG = J(RG) + R$.
	
Now, take any $f \in RG$. Without loss of generality, we may write $f = a + j$ with $a \in R$ and $j \in J(RG)$. Since $R$ is strongly $J^{\#}$-clean, we can express $a = e + x$, where $e \in \mathrm{Id}(R)$ and $x \in J^{\#}(R)$.
	
Now, Lemma~\ref{prop group ring} works to receive $x(1 - x) \in J(RG)$ and, as $x \in J^{\#}(R)$, it follows that $1 - x \in U(R)$, whence $x \in J(RG)$. Consequently,
\[
	f = a + j = (e + x) + j = e + (x + j) \in \mathrm{Id}(RG) + J(RG).
\]
Furthermore, since $R$ is abelian, we have $e(x + j) = (x + j)e$. This substantiates that $e$ and $x + j$ commute, completing the strongly $J^{\#}$-clean decomposition of $f$. Therefore, $RG$ is a strongly $J^{\#}$-clean ring, as claimed.
\end{proof}

We terminate this section with the following assertion, that is relevant to Theorem~\ref{1.7} listed above.

\begin{proposition}\label{group}
Let $R$ be a ring, and let $G \neq 1$ be a locally finite group in which each finite subgroup has odd order. Then, the group ring $RG$ is not strongly $J^{\#}$-clean.
\end{proposition}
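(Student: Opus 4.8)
The plan is to derive a contradiction from the assumption that $RG$ is strongly $J^{\#}$-clean, using the hypothesis that $G$ is nontrivial with all finite subgroups of odd order. The key leverage is Corollary~\ref{3}, which forces $2 \in J(R)$ in any strongly $J^{\#}$-clean ring; combined with Theorem~\ref{1.7}, strong $J^{\#}$-cleanness of $RG$ would force $G$ to be a $2$-group. Since $G \neq 1$, it would contain a nontrivial element of order a power of $2$, hence a nontrivial finite subgroup of even order, directly contradicting the assumption that every finite subgroup has odd order.

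First I would suppose, for contradiction, that $RG$ is strongly $J^{\#}$-clean. By Theorem~\ref{1.7}, this immediately yields that $R$ is strongly $J^{\#}$-clean and that $G$ is a $2$-group. Since $G \neq 1$, pick any nonidentity $g \in G$; as $G$ is a $2$-group, $g$ has finite order equal to a power of $2$, say $2^k$ with $k \geq 1$. Then the cyclic subgroup $\langle g \rangle$ is a finite subgroup of $G$ whose order $2^k$ is even. This contradicts the standing hypothesis that every finite subgroup of $G$ has odd order, completing the argument.

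I expect the main obstacle to be not the logical skeleton but the correct invocation of Theorem~\ref{1.7}, which requires $G$ to be a genuine group (which it is) and delivers the $2$-group conclusion through the fact that $2 \in J(R)$ rules out any other prime lying in the radical. One subtlety worth double-checking is whether the definition of \emph{$2$-group} in this paper (every element has order a power of $2$) already guarantees a nontrivial element of even order: since an element of order $2^k$ with $k \geq 1$ indeed has even order, and a nontrivial $2$-group must contain such an element, this is immediate. Thus no delicate estimate is needed; the entire weight rests on correctly chaining Theorem~\ref{1.7} with the parity hypothesis on finite subgroups.
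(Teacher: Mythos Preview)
Your argument is correct and, in fact, considerably more direct than the paper's. You invoke Theorem~\ref{1.7} to conclude that $G$ must be a $2$-group, then observe that any nontrivial element $g$ generates a finite cyclic subgroup of even order $2^k$ ($k\geq 1$), immediately contradicting the odd-order hypothesis. This even shows that the ``locally finite'' assumption is not needed for your route, since a $2$-group already has all cyclic subgroups finite.

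The paper takes a longer path: it passes to $\bar R = R/J(R)$, uses Proposition~\ref{booli} to see that $\bar R$ is Boolean, then invokes Connell's criterion to show $\bar R G$ is von Neumann regular (this is where local finiteness and the odd-order hypothesis, which makes the orders of finite subgroups invertible in $\bar R$, are used). Since $\bar R G$ is a homomorphic image of $RG$ modulo an ideal contained in $J(RG)$, it is again strongly $J^{\#}$-clean, hence Boolean (as $J(\bar R G)=0$), which forces $G=1$. Your approach bypasses the von Neumann regularity machinery entirely and leans solely on the already-established Theorem~\ref{1.7}; the paper's approach, while more circuitous, does illustrate how regularity of the factor group ring interacts with the strongly $J^{\#}$-clean condition.
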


\begin{proof}
Assume, to the contrary, that $RG$ is strongly $J^{\#}$-clean. Thus, Theorem~\ref{1.7} refers us to the fact that $R$ is also strongly $J^{\#}$-clean. In particular, Proposition \ref{booli} is working to infer that the factor ring $\bar{R} := R / J(R)$ is Boolean. But, since every finite subgroup of $G$ has odd order, it follows that all natural numbers $n \in |G|$ (i.e., the orders of finite subgroups of $G$) are units in $R$. Resultantly, each such $n$ remains a unit in $\bar{R}$.
	
Now, observe that $\bar{R}$ is a Boolean ring, and hence von Neumann regular. Since $G$ is locally finite, we can apply \cite[Theorem 3]{conel} to deduce that the group ring $\bar{R}G$ is also von Neumann regular. On the other hand, as a homomorphic image of the strongly $J^{\#}$-clean ring $RG$, the ring $\bar{R}G$ must also be strongly $J^{\#}$-clean. Therefore, $\bar{R}G$ is necessarily Boolean, since all strongly $J^{\#}$-clean rings with identity are Boolean. However, this leads to an obvious contradiction, because being a Boolean group ring readily implies that the former group must be trivial, that is, $G = 1$, which contradicts the assumption that $G \neq 1$. Thus, in turn, $RG$ cannot be strongly $J^{\#}$-clean, as asserted.
\end{proof}

\section{Generalized Matrix Rings}

In virtue of Lemma \ref{matrix}, the matrix ring $M_n(R)$ is {\it not} always strongly $J^{\#}$-clean for $n \ge 2$. Motivated by this observation, we characterize in this section when a generalized matrix ring over a local ring is strongly $J^{\#}$-clean.

\medskip

Let $R$ be a ring, and let $s \in R$ be central. We use $K_s(R)$ to denote the set
\[
\{ [a_{ij}] \in M_2(R) \mid a_{ij} \in R \text{ for all } i,j \}
\]
equipped with the following operations:

\noindent Addition:
\[
\begin{bmatrix} a & b \\ c & d \end{bmatrix}
+
\begin{bmatrix} a' & b' \\ c' & d' \end{bmatrix}
=
\begin{bmatrix} a+a' & b+b' \\ c+c' & d+d' \end{bmatrix},
\]

\noindent Multiplication:
\[
\begin{bmatrix} a & b \\ c & d \end{bmatrix}
\begin{bmatrix} a' & b' \\ c' & d' \end{bmatrix}
=
\begin{bmatrix}
	aa' + s\,b\,c' & a b' + b d' \\
	c a' + d c' & s\,c\,b' + d d'
\end{bmatrix}.
\]

The element $s$ is called the {\it multiplier} of $K_s(R)$. With these operations in hands, $K_s(R)$ manifestly forms a ring. For more details, we refer to \cite{tang}.

\medskip

Our preliminary technical claims are these:

\begin{lemma}\cite{kry}.
Let $R$ be a ring with $s \in R$ central, and let $J = J(R)$ be the Jacobson radical of $R$. Then, the following two claims are valid:

(1) $J(K_s(R)) = \begin{bmatrix}
	J &  (s : J) \\
	(s : J) & J
\end{bmatrix}$, where $(s : J) = \{ r \in R \mid r s \in J \}$.

(2) If $R$ is a local ring with $s \in J$, then
\[
J(K_s(R)) = \begin{bmatrix}
	J & R  \\
	R & J
\end{bmatrix}
.\]

Moreover, $\begin{bmatrix} a & x \\ y & b \end{bmatrix} \in U(K_s(R))$ if, and only if, $a, b \in U(R)$.
\end{lemma}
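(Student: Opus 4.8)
The plan is to prove (1) directly and then read (2) off from it. Write
$N := \begin{bmatrix} J & (s:J)\\ (s:J) & J\end{bmatrix}$
and first check that $N$ is a two-sided ideal of $K_s(R)$. Using that $s$ is central and that $J$ is an ideal, a short entrywise computation with the given multiplication rule shows that in any product of an element of $N$ with an arbitrary matrix each diagonal entry lands in $J$ and each off-diagonal entry $t$ satisfies $ts\in J$; the centrality of $s$ is what converts a factor $s\,t'$ with $t'\in(s:J)$ into $t's\in J$. With $N$ an ideal, I would obtain $J(K_s(R))=N$ by proving the two inclusions separately.

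For $N\subseteq J(K_s(R))$ it suffices, $N$ being two-sided, to show that $1-y\in U(K_s(R))$ for every $y\in N$: then for any $r$ we have $ry\in N$, so $1-ry$ is a unit, which places $y$ in $J(K_s(R))$. Writing $y=\begin{bmatrix} j_1 & p \\ q & j_2\end{bmatrix}$ with $j_1,j_2\in J$ and $ps,qs\in J$, I would invert $\begin{bmatrix} u & -p\\ -q & v\end{bmatrix}$ with $u=1-j_1,\ v=1-j_2\in U(R)$ by Schur-complement elimination of the four scalar equations. The crucial point is that the correction terms $s\,p\,v^{-1}q$ and $s\,q\,u^{-1}p$ lie in $J$ (because $ps,qs\in J$ and $s$ is central), so the Schur complements $u-s\,p\,v^{-1}q$ and $v-s\,q\,u^{-1}p$ remain inside $u+J$ and $v+J$ and hence stay units; solving then yields an explicit inverse.

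For the reverse inclusion $J(K_s(R))\subseteq N$, the orthogonal idempotents $e_1=\begin{bmatrix}1&0\\0&0\end{bmatrix}$ and $e_2=\begin{bmatrix}0&0\\0&1\end{bmatrix}$ with $e_1+e_2=1$ give $e_iK_s(R)e_i\cong R$, so the standard corner identity $e_iJ(K_s(R))e_i=J(e_iK_s(R)e_i)=J$ (the relation $J(eRe)=eJ(R)e$ already used in Lemma~\ref{1.5}) forces the diagonal entries of any $M\in J(K_s(R))$ into $J$. To control an off-diagonal entry $b$, I would exploit that $J(K_s(R))$ is an ideal: $e_1Me_2=\begin{bmatrix}0&b\\0&0\end{bmatrix}\in J(K_s(R))$, and multiplying on the right by $\begin{bmatrix}0&0\\1&0\end{bmatrix}$ produces $\begin{bmatrix}sb&0\\0&0\end{bmatrix}\in J(K_s(R))$, whose corner entry $sb$ therefore lies in $J$; thus $b\in(s:J)$, and symmetrically $c\in(s:J)$. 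This yields $J(K_s(R))\subseteq N$ and completes (1).

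Part (2) is then immediate: if $R$ is local and $s\in J$, then $rs\in J$ for every $r\in R$, so $(s:J)=R$ and $J(K_s(R))=\begin{bmatrix} J & R\\ R& J\end{bmatrix}$ by (1). For the unit criterion, $s\in J$ makes the assignment $\begin{bmatrix}a&x\\y&b\end{bmatrix}\mapsto(a+J,\,b+J)$ a surjective ring homomorphism $K_s(R)\to R/J\times R/J$ (the mixed products $s\,x\,y'$ and $s\,y\,x'$ fall into $J$) with kernel exactly $J(K_s(R))$. Since units lift modulo the Jacobson radical, the matrix is a unit iff its image is a unit in $R/J\times R/J$, i.e.\ iff $a+J$ and $b+J$ are units; as $R$ is local this happens precisely when $a,b\in U(R)$. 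I expect the quasi-regularity computation of the second paragraph to be the main obstacle, since it is the only place where the multiplier $s$ and the definition of $(s:J)$ genuinely interact; everything else is bookkeeping with corner rings and a quotient.
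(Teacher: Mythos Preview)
The paper does not prove this lemma at all: it is stated with the citation \cite{kry} and used as a black box, so there is no ``paper's own proof'' to compare against. Your argument is correct and self-contained. The ideal check and the Schur-complement invertibility for $N\subseteq J(K_s(R))$ go through as you describe (the key observation $spv^{-1}q=ps\cdot v^{-1}q\in J$ is exactly what is needed), and the corner-ring trick together with right multiplication by $\begin{bmatrix}0&0\\1&0\end{bmatrix}$ cleanly extracts $sb\in J$ for the reverse inclusion. The quotient map $K_s(R)\to R/J\times R/J$ is a genuine ring homomorphism precisely because $s\in J$ kills the cross terms $sbc'$ and $scb'$, and identifying its kernel with $J(K_s(R))$ gives the unit criterion immediately. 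One small remark: when you solve the Schur system you only explicitly produce a one-sided inverse of $1-y$; you should either note the symmetric computation for the other side, or invoke the standard fact that a two-sided ideal consisting of left-quasi-regular elements is automatically contained in the Jacobson radical.
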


Our own achievements here are the following.

\begin{proposition}\label{12}
Let $R$ be a ring and let $s \in R$ be central. Then, $A \in K_s(R)$ is strongly $J^{\#}$-clean if, and only if, there exists $P \in U(K_s(R))$ such that $PAP^{-1} \in K_s(R)$ is strongly $J^{\#}$-clean.
\end{proposition}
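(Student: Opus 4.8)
The plan is to recognize that this proposition is really just the assertion that strong $J^{\#}$-cleanness is invariant under conjugation by a unit, applied inside the ring $K_s(R)$. Since conjugation is a symmetric relation, both directions of the stated equivalence will follow from a single general observation, which I would isolate first: in any ring $S$, if $a \in S$ is strongly $J^{\#}$-clean and $u \in U(S)$, then $uau^{-1}$ is strongly $J^{\#}$-clean. Applying this with $S = K_s(R)$, $u = P$, and $u = P^{-1}$ respectively handles the two implications at once, since $P^{-1}(PAP^{-1})P = A$.

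To prove the general observation, I would start from a strongly $J^{\#}$-clean decomposition $a = e + j$, where $e^2 = e$, $j \in J^{\#}(S)$, and $ej = je$. Conjugating gives
\[
uau^{-1} = ueu^{-1} + uju^{-1}.
\]
Here $ueu^{-1}$ is an idempotent, being a conjugate of the idempotent $e$; the element $uju^{-1}$ lies in $J^{\#}(S)$ directly by Lemma~\ref{close prod}(4); and the two conjugates commute, because
\[
(ueu^{-1})(uju^{-1}) = u(ej)u^{-1} = u(je)u^{-1} = (uju^{-1})(ueu^{-1}).
\]
Thus $uau^{-1} = ueu^{-1} + uju^{-1}$ is a strongly $J^{\#}$-clean expression, establishing the general fact.

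For the forward direction of the proposition, I take $A \in K_s(R)$ strongly $J^{\#}$-clean and any $P \in U(K_s(R))$; the general observation (with $S = K_s(R)$) shows $PAP^{-1}$ is strongly $J^{\#}$-clean, and it automatically lies in $K_s(R)$ since $K_s(R)$ is a ring containing both $A$ and $P$. For the converse, I assume $B := PAP^{-1}$ is strongly $J^{\#}$-clean and apply the observation again, now conjugating $B$ by the unit $P^{-1} \in U(K_s(R))$, to conclude that $A = P^{-1}BP$ is strongly $J^{\#}$-clean.

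I do not expect a genuine obstacle here, since the argument is a routine transport-of-structure under conjugation; the only point requiring care is invoking Lemma~\ref{close prod}(4) to guarantee that the conjugate of a $J^{\#}$-element remains in $J^{\#}$, as this is precisely the property that makes strong $J^{\#}$-cleanness conjugation-invariant rather than merely clean-invariant. The symmetric packaging of the two directions via the single observation is what keeps the proof short.
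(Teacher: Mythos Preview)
Your proposal is correct and follows essentially the same approach as the paper: conjugate a strongly $J^{\#}$-clean decomposition $A=E+T$ by $P$, note that $PEP^{-1}$ is idempotent, invoke Lemma~\ref{close prod}(4) to get $PTP^{-1}\in J^{\#}(K_s(R))$, and observe that the conjugates still commute. The paper likewise reduces to proving only the forward direction, leaving the converse implicit via the symmetry you spell out with $P^{-1}$.
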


\begin{proof}
It is just enough to demonstrate that, if $A \in K_s(R)$ is strongly $J^{\#}$-clean, then so is $PAP^{-1}$.

To this goal, write $A = E + T$, where $E^2 = E$, $T \in J^{\#}(K_s(R))$, and $ET = TE$. Thus, one follows that
\[
PAP^{-1} = PEP^{-1} + PTP^{-1},
\]
where $(PEP^{-1})^2 = PEP^{-1}$, $PTP^{-1} \in J^{\#}(K_s(R))$ in conjunction with Lemma \ref{close prod}(4), and $PEP^{-1}$ commutes with $PTP^{-1}$.

Hence, $PAP^{-1}$ is strongly $J^{\#}$-clean in $K_s(R)$, as requested.
\end{proof}

\begin{lemma}\cite{gur}\label{222}.
Let $R$ be a local ring with $s \in R$ be central, and let $E$ be a non-trivial idempotent in $K_s(R)$. Then, the following two items are fulfilled:

(1) If $s \in U(R)$, then
$
E \sim \begin{bmatrix} 1 & 0 \\ 0 & 0 \end{bmatrix}.
$

(2) If $s \in J(R)$, then either
$
E \sim \begin{bmatrix} 1 & 0 \\ 0 & 0 \end{bmatrix} \quad \text{or} \quad E \sim \begin{bmatrix} 0 & 0 \\ 0 & 1 \end{bmatrix}.
$
\end{lemma}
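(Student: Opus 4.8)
The plan is to classify the idempotents by passing to the quotient $\overline{T} := T/J(T)$, where $T := K_s(R)$, and then transporting the result back along the projection $\pi : T \to \overline T$. The engine is the classical congruence principle for idempotents: if $E, F \in T$ are idempotents with $E - F \in J(T)$, then $u := EF + (1-E)(1-F)$ satisfies $Eu = EF = uF$ and $\pi(u) = \pi(E)\pi(F) + (1-\pi(E))(1-\pi(F)) = 1$ (since $\pi(E) = \pi(F)$ is idempotent), so $u \in U(T)$ and $u^{-1} E u = F$; that is, $E \sim F$. Because an element of $T$ is a unit exactly when its image in $\overline T$ is a unit, units lift along $\pi$, and the congruence principle upgrades to the following: \emph{if $\pi(E)$ and $\pi(F)$ are conjugate in $\overline T$, then $E$ and $F$ are conjugate in $T$} — lift a conjugating unit to reduce to the case where the two idempotents already agree modulo $J(T)$. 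I would also record at the outset that a nontrivial idempotent $E$ cannot reduce to $0$ or $1$ (an idempotent inside $J(T)$ vanishes, and likewise for $1-E$), so $\pi(E)$ is itself a nontrivial idempotent of $\overline T$, and it suffices to enumerate those.

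For part (1), where $s \in U(R)$, the centrality of $s$ gives $(s:J) = J$, so the cited description yields $J(T) = \begin{bmatrix} J & J \\ J & J \end{bmatrix}$ and therefore $\overline T \cong M_2(\bar R)$ with $\bar R = R/J(R)$ a division ring. Over a division ring every nontrivial idempotent of $M_2(\bar R)$ has one-dimensional image and kernel, so all of them are conjugate to $\pi(e_{11})$, where $e_{11} = \begin{bmatrix} 1 & 0 \\ 0 & 0 \end{bmatrix}$. The upgraded principle then gives $E \sim e_{11}$ in $T$. Equivalently, when $s$ is a unit the rescaling $[a_{ij}] \mapsto \begin{bmatrix} a_{11} & a_{12} \\ s\,a_{21} & a_{22}\end{bmatrix}$ is a ring isomorphism $K_s(R) \cong M_2(R)$ fixing diagonal matrices, and one may instead invoke that a nontrivial idempotent of $M_2(R)$ over a local ring splits $R^2$ into two free rank-one summands.

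For part (2), where $s \in J(R)$, one has $(s:J) = R$, so $J(T) = \begin{bmatrix} J & R \\ R & J \end{bmatrix}$; both off-diagonal entries are annihilated and the diagonal is reduced modulo $J$, whence $\overline T \cong \bar R \times \bar R$ via $[a_{ij}] \mapsto (\bar a_{11}, \bar a_{22})$. The only nontrivial idempotents of $\bar R \times \bar R$ are $(1,0)$ and $(0,1)$, which are precisely the images of $e_{11}$ and $e_{22} = \begin{bmatrix} 0 & 0 \\ 0 & 1\end{bmatrix}$; being central, $\pi(E)$ \emph{equals} one of them, so the congruence principle directly delivers $E \sim e_{11}$ or $E \sim e_{22}$.

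The main obstacle is isolating and proving the lifting engine — that conjugacy (indeed mere congruence) of idempotents modulo $J(T)$ forces conjugacy in $T$ — since this is what does the real work, most visibly in part (1), where the quotient idempotents are only conjugate and not equal. Once this is in hand, identifying $\overline T$ in each regime from the given formula for $J(K_s(R))$ and reading off its nontrivial idempotents is entirely routine.
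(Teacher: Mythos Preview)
The paper does not prove this lemma at all: it is quoted verbatim from \cite{gur} and carries no proof environment. So there is nothing in the paper to compare your argument against.

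That said, your argument is sound and self-contained. The key engine---that idempotents congruent modulo the Jacobson radical are conjugate via $u = EF + (1-E)(1-F)$, and that this upgrades to ``conjugate modulo $J$ implies conjugate'' because units lift---is correctly stated and is indeed what drives the classification. Your identification of $\overline T$ in each regime is also right: when $s \in U(R)$ the rescaling $[a_{ij}] \mapsto \begin{bmatrix} a_{11} & a_{12} \\ s\,a_{21} & a_{22}\end{bmatrix}$ really is a ring isomorphism $K_s(R) \cong M_2(R)$ (centrality of $s$ is exactly what makes the $(2,1)$-entry behave), giving $\overline T \cong M_2(\bar R)$ over the division ring $\bar R = R/J(R)$; and when $s \in J(R)$ the off-diagonal entries die in the quotient and the $s$-terms in the diagonal products vanish, giving $\overline T \cong \bar R \times \bar R$. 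The enumeration of nontrivial idempotents in each quotient is then routine, and your observation that $\pi(E)$ stays nontrivial (since an idempotent in $J(T)$ is zero) closes the loop.

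One minor point worth making explicit in a written-up version: in part (1) you use that any two rank-one idempotents of $M_2(\bar R)$ are conjugate over a division ring, which is standard but deserves a one-line justification (complete the image to a basis). Otherwise the proposal stands on its own.
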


\begin{proposition}\label{13}
Let $R$ be a local ring with $s \in R$ central. If
$
\begin{bmatrix} a & 0 \\ 0 & b \end{bmatrix} \in J^{\#}(K_s(R)),
$
then $a, b \in J(R)$.
\end{proposition}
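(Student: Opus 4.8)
The plan is to strip the problem down to the two diagonal entries and then let the locality of $R$ do the real work. Concretely, I would first show that a diagonal element of $K_s(R)$ powers up entrywise, then read off from the description of $J(K_s(R))$ that a power of each diagonal entry lands in $J(R)$, and finally use that $R$ is local to conclude the entries themselves lie in $J(R)$.

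First I would record how the diagonal behaves under the twisted multiplication of $K_s(R)$. Writing $M = \begin{bmatrix} a & 0 \\ 0 & b \end{bmatrix}$ and applying the multiplication rule, every term that carries the multiplier $s$ is of the form $s\,b\,c'$ or $s\,c\,b'$ and so is annihilated by the vanishing off-diagonal entries; thus the diagonal matrices form a subring of $K_s(R)$ isomorphic to $R \times R$, and in particular $M^{n} = \begin{bmatrix} a^{n} & 0 \\ 0 & b^{n} \end{bmatrix}$ for every $n \ge 1$ (an immediate induction). Hence $s$ never intervenes, and the diagonal entries of the powers of $M$ are exactly the powers of $a$ and of $b$.

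Next I would invoke the definition of $J^{\#}$: since $M \in J^{\#}(K_s(R))$, there is some $n \ge 1$ with $M^{n} \in J(K_s(R))$. By the preceding lemma describing $J(K_s(R))$ — whether in the general form with diagonal block $J$, or in the case $s \in J$ where $J(K_s(R)) = \left[\begin{smallmatrix} J & R \\ R & J \end{smallmatrix}\right]$ — every matrix in $J(K_s(R))$ has both diagonal entries in $J = J(R)$. Combined with the power formula above, this forces $a^{n} \in J(R)$ and $b^{n} \in J(R)$.

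Finally I would use that $R$ is local, so that $J(R)$ is precisely the set of non-units of $R$. If $a \notin J(R)$, then $a \in U(R)$, whence $a^{n} \in U(R)$, contradicting $a^{n} \in J(R)$; therefore $a \in J(R)$, and the identical argument gives $b \in J(R)$. The only step requiring any care is this last one: it is exactly locality that permits one to ``take $n$-th roots'' of radical membership, since over a general ring $a^{n} \in J(R)$ need not imply $a \in J(R)$. Everything preceding it is a direct computation, so I anticipate no genuine obstacle.
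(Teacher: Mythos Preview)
Your argument is correct, and it is a genuinely different route from the paper's. The paper does not compute powers of $M$ at all; instead it argues by cases on the locality of $R$: if both $a,b\in U(R)$ then $M$ itself is a unit of $K_s(R)$, contradicting $M\in J^{\#}$; and if, say, $a\in J(R)$ but $b\in U(R)$, the paper picks the commuting matrix $X=\begin{bmatrix}0&0\\0&b^{-1}\end{bmatrix}$, observes that $MX\in J^{\#}(K_s(R))$ (since $(MX)^n=M^nX^n$ with $M^n\in J$), and hence $I-MX=\begin{bmatrix}1&0\\0&0\end{bmatrix}$ would have to be a unit, which it visibly is not.

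Your approach is more elementary and more transparent: you never need the ``$1-($commuting product$)$ is a unit'' trick, only the diagonal power formula and the explicit shape of $J(K_s(R))$ from the preceding lemma. The paper's approach, on the other hand, illustrates a technique that recurs in the $J^{\#}$-clean literature (exploiting commutation to force an impossible unit), and it does not require knowing that the diagonal blocks of $J(K_s(R))$ are exactly $J(R)$ in the general case $s\in U(R)$, though that is already stated in the cited lemma. Either proof is perfectly acceptable here.
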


\begin{proof}
Assume
$
\begin{bmatrix} a & 0 \\ 0 & b \end{bmatrix} \in J^{\#}(K_s(R)).
$
Since $R$ is local, either $a, b \in U(R)$, or one of the elements $a, b \in J(R)$.

If both $a, b \in U(R)$, then
$
\begin{bmatrix} a & 0 \\ 0 & b \end{bmatrix} \in U(K_s(R)),
$
which is a contradiction.

So, without loss of generality, we may assume $a \in J(R)$ and $b \in U(R)$. Then, considering the relations
\[
I - \begin{bmatrix} a & 0 \\ 0 & b \end{bmatrix} \begin{bmatrix} 0 & 0 \\ 0 & b^{-1} \end{bmatrix}
= \begin{bmatrix} 1 & 0 \\ 0 & 0 \end{bmatrix} \in U(K_s(R)),
\]
we find that they are impossible. Hence, we must have $a, b \in J(R)$, as formulated.
\end{proof}

We now have all the ingredients necessary to prove the following main result for this section.

\begin{theorem}\label{locstr}
Let $R$ be a local ring with $s \in R$ central. Then, $A \in K_s(R)$ is strongly $J^{\#}$-clean if, and only if, one of the following three statements holds:

(1) $A \in J^{\#}(K_s(R))$,

(2) $A \in I + J^{\#}(K_s(R))$, or

(3) $A \sim \begin{bmatrix} a & 0 \\ 0 & b \end{bmatrix}$ where one of $a, b$ is in $J(R)$ and the other is in $1 + J(R)$.
\end{theorem}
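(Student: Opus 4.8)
The plan is to prove both directions by reducing to diagonal matrices via the conjugation-invariance of Proposition~\ref{12}, and then reading off the structure from the idempotent normal forms of Lemma~\ref{222} together with the obstruction in Proposition~\ref{13}. For the sufficiency direction, conditions (1) and (2) are almost immediate: if $A\in J^{\#}(K_s(R))$ then $A=0+A$ is a strongly $J^{\#}$-clean expression with central idempotent $0$, while if $A\in I+J^{\#}(K_s(R))$ then $A=I+(A-I)$ works with central idempotent $I$; in both cases the idempotent commutes with the $J^{\#}$-part trivially.

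For the sufficiency of (3), Proposition~\ref{12} lets me assume $A=\begin{bmatrix} a & 0 \\ 0 & b \end{bmatrix}$ outright and, relabelling if necessary, that $a\in J(R)$ and $b\in 1+J(R)$. I would then split $A=\begin{bmatrix} 0 & 0 \\ 0 & 1 \end{bmatrix}+\begin{bmatrix} a & 0 \\ 0 & b-1 \end{bmatrix}$, note the first summand is idempotent, verify with the twisted multiplication of $K_s(R)$ that the two diagonal summands commute, and observe that the second summand lies in $J(K_s(R))\subseteq J^{\#}(K_s(R))$ by the structural description in \cite{kry}, since both of its diagonal entries are in $J(R)$ while its vanishing off-diagonal entries lie in $(s:J)$.

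The necessity direction is where the work lies. Starting from a strongly $J^{\#}$-clean decomposition $A=E+T$ with $E^2=E$, $T\in J^{\#}(K_s(R))$ and $ET=TE$, I would split on $E$. The cases $E=0$ and $E=I$ give (1) and (2) respectively. If $E$ is a non-trivial idempotent, Lemma~\ref{222} furnishes $P\in U(K_s(R))$ with $PEP^{-1}=\begin{bmatrix} 1 & 0 \\ 0 & 0 \end{bmatrix}$ or $\begin{bmatrix} 0 & 0 \\ 0 & 1 \end{bmatrix}$; conjugating the whole decomposition yields $PAP^{-1}=PEP^{-1}+PTP^{-1}$, where $PTP^{-1}\in J^{\#}(K_s(R))$ by Lemma~\ref{close prod}(4) and still commutes with $PEP^{-1}$. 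The decisive step is to expand the relation $(PEP^{-1})(PTP^{-1})=(PTP^{-1})(PEP^{-1})$ entrywise in the $s$-twisted product: with $PEP^{-1}=\begin{bmatrix} 1 & 0 \\ 0 & 0 \end{bmatrix}$ this forces both off-diagonal entries of $PTP^{-1}$ to be zero, so $PTP^{-1}=\begin{bmatrix} p & 0 \\ 0 & t \end{bmatrix}$ is diagonal. Proposition~\ref{13} then gives $p,t\in J(R)$, whence $PAP^{-1}=\begin{bmatrix} 1+p & 0 \\ 0 & t \end{bmatrix}$ with $1+p\in 1+J(R)$ and $t\in J(R)$; this is exactly (3), the alternative normal form for $E$ being entirely symmetric.

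The main obstacle I anticipate is the non-trivial-idempotent case, and in particular the bookkeeping attached to conjugation. One must confirm that conjugating the entire decomposition simultaneously preserves idempotency, membership in $J^{\#}(K_s(R))$, and the commuting relation, and then execute the two $2\times 2$ products in $K_s(R)$ carefully enough to be sure that $ET=TE$ genuinely annihilates the off-diagonal entries of $T$ rather than merely constraining them. Once this diagonalization is secured, Proposition~\ref{13} closes the argument without further difficulty, and the conjugation-invariance from Proposition~\ref{12} guarantees that placing $PAP^{-1}$ in the similarity class of (3) places $A$ there too.
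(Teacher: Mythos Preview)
Your proposal is correct and follows essentially the same route as the paper's own proof: both directions are handled by the same conjugation-and-diagonalize strategy, invoking Proposition~\ref{12}, Lemma~\ref{222}, and Proposition~\ref{13} at exactly the same junctures, with the trivial idempotent cases $E=0$ and $E=I$ yielding (1) and (2) immediately and the non-trivial case reduced to a diagonal $T$ via the commutation relation. The only cosmetic difference is that you run the non-trivial case with $PEP^{-1}=\begin{bmatrix}1&0\\0&0\end{bmatrix}$ while the paper writes out the $\begin{bmatrix}0&0\\0&1\end{bmatrix}$ case, which is the symmetric variant you anticipate.
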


\begin{proof}
\noindent\((\Leftarrow)\) It just suffices to show that
$
	B := \begin{bmatrix} a & 0 \\ 0 & b \end{bmatrix}
$
is strongly $J^{\#}$-clean. So, with no harm in generality, we can assume that
\( a \in J(R) \) and \( b \in 1 + J(R) \). Thus,
\[
	B = \begin{bmatrix} a & 0 \\ 0 & b \end{bmatrix}
	= \begin{bmatrix} 0 & 0 \\ 0 & 1 \end{bmatrix}
	+ \begin{bmatrix} a & 0 \\ 0 & b - 1 \end{bmatrix},
\]
where
$
	\left( \begin{bmatrix} 0 & 0 \\ 0 & 1 \end{bmatrix} \right)^2
	= \begin{bmatrix} 0 & 0 \\ 0 & 1 \end{bmatrix}
$
and
\[
	\begin{bmatrix} a & 0 \\ 0 & b - 1 \end{bmatrix} \in J(K_s(R)) \subseteq J^{\#}(K_s(R)).
\]
Hence, \(B\) is strongly $J^{\#}$-clean.
	
\noindent\((\Rightarrow)\) Suppose that \(A\) is strongly $J^{\#}$-clean in \(K_s(R)\). Then, there exists \(E \in K_s(R)\) with \(E^2 = E\) such that \(A - E \in J^{\#}(K_s(R))\) and \(AE = EA\).
	
If, for a moment, \(E = 0\) or \(E = I\), then \(A \in J^{\#}(K_s(R))\) or \(A \in I + J^{\#}(K_s(R))\), respectively. Otherwise, via Lemma \ref{222}, either
\[
E \sim \begin{bmatrix} 1 & 0 \\ 0 & 0 \end{bmatrix}
\quad \text{or} \quad
E \sim \begin{bmatrix} 0 & 0 \\ 0 & 1 \end{bmatrix}.
\]
Assume only that \(E \sim \begin{bmatrix} 0 & 0 \\ 0 & 1 \end{bmatrix}\) as the other case is very similar. Then, there exists \(P \in U(K_s(R))\) such that
\[
	PEP^{-1} = \begin{bmatrix} 0 & 0 \\ 0 & 1 \end{bmatrix}.
\]
By Proposition \ref{12},
\[
	PAP^{-1} - PEP^{-1} = PTP^{-1}
\]
is strongly $J^{\#}$-clean decomposition in \(K_s(R)\). Let \(PTP^{-1} = [a_{ij}]\). Thus, from
\[
	PTP^{-1} \begin{bmatrix} 0 & 0 \\ 0 & 1 \end{bmatrix}
	= \begin{bmatrix} 0 & 0 \\ 0 & 1 \end{bmatrix} PQP^{-1},
\]
we obtain \(a_{12} = a_{21} = 0\), whence
\[
	PQP^{-1} = \begin{bmatrix} a_{11} & 0 \\ 0 & a_{22} \end{bmatrix}
	\in J^{\#}(K_s(R)).
\]
By Proposition \ref{13}, \(a_{11}, a_{22} \in J(R)\). Consequently,
\[
	PAP^{-1} = PEP^{-1} + PTP^{-1}
	= \begin{bmatrix} a_{11} & 0 \\ 0 & a_{22} + 1 \end{bmatrix},
\]
where \(a_{11} \in J(R)\) and \(a_{22} + 1 \in 1 + J(R)\), as required.
\end{proof}

\section{Certain rings that are Equivalent to Strongly J-Clean Rings}

Let $R$ be a ring. We say that an element $a \in R$ is \emph{$\Delta$-nilpotent} if $1-au$ is invertible for every $u \in U(R)$ satisfying $au=ua$. The set of all $\Delta$-nilpotent elements of $R$ is denoted by $\Delta N(R)$. Clearly, the inclusions $$J(R) \subseteq QN(R) \subseteq \Delta N(R)~ \bigvee ~ Nil(R) \subseteq QN(R) \subseteq \Delta N(R)$$ are always fulfilled. Thus, a ring $R$ is called a \emph{$\Delta NU$-ring} if $U(R)=1+\Delta N(R)$, where $U(R)$ denotes the group of units of $R$. This class of rings was recently introduced by Tien \emph{et al.}~\cite{tkq}. 

We now define the class consisting of \emph{strongly $\Delta$-nil-clean rings}, that are such rings $R$ for which each element of $R$ can be expressed as $a=e+q$ with $e^2=e$, $q \in \Delta N(R)$ and $eq=qe$.

\medskip

Our first basic preliminaries in this direction are these:

\begin{lemma}\label{111}
Each strongly $\Delta$-nil-clean ring is $\Delta NU$.
\end{lemma}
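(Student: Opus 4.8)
The plan is to prove that if $R$ is strongly $\Delta$-nil-clean, then $U(R) = 1 + \Delta N(R)$. The inclusion $1 + \Delta N(R) \subseteq U(R)$ is immediate from the definition of $\Delta N(R)$: if $q \in \Delta N(R)$, then taking $u = -1 \in U(R)$ (which trivially commutes with $q$) gives that $1 - (-1)q = 1 + q$ is invertible. So the real content lies in the reverse inclusion $U(R) \subseteq 1 + \Delta N(R)$, which I would establish by imitating the pattern already used in this paper for the parallel statement about $J^{\#}$-clean rings (compare Proposition~\ref{unit} and Corollary~\ref{cor1}).

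**First I would** take an arbitrary unit $u \in U(R)$ and apply the strongly $\Delta$-nil-clean decomposition to write $u = e + q$, where $e^2 = e$, $q \in \Delta N(R)$, and $eq = qe$. The goal is to force $e = 1$, so that $u - 1 = q \in \Delta N(R)$, giving $u \in 1 + \Delta N(R)$. The mechanism for forcing $e = 1$ should mirror the argument in Proposition~\ref{unit}: since $u$ is a unit commuting with $q$, I would aim to show that $e$ itself is a unit, and an idempotent unit must equal $1$. Concretely, from $u = e + q$ and the commutativity $uq = qu$, I would try to express $e = u - q = u(1 - u^{-1}q)$ and argue that $1 - u^{-1}q \in U(R)$. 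Here the defining property of $\Delta N(R)$ applies with the unit $u^{-1}$ (noting $q$ commutes with $u$ hence with $u^{-1}$), yielding $1 - u^{-1}q \in U(R)$; therefore $e$ is a product of two units, hence a unit, hence $e = 1$.

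**The main obstacle** I anticipate is the delicate bookkeeping around the $\Delta N$ definition, which quantifies over units $v$ with $vq = qv$ and asserts $1 - qv \in U(R)$ (equivalently $1 - vq$, since these differ by conjugation). I must verify that $u^{-1}$ genuinely commutes with $q$: from $uq = qu$ one left- and right-multiplies by $u^{-1}$ to get $qu^{-1} = u^{-1}q$, so the hypothesis of $\Delta$-nilpotency is satisfied and $1 - u^{-1}q \in U(R)$ is legitimate. A secondary subtlety is confirming that the two-sided versus one-sided flavor of $1 - q u^{-1}$ versus $1 - u^{-1}q$ does not cause trouble; since $q$ and $u^{-1}$ commute, these coincide, so no ambiguity arises. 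Once $e = 1$ is secured, the conclusion $U(R) \subseteq 1 + \Delta N(R)$ follows immediately, and combined with the easy reverse inclusion this gives $U(R) = 1 + \Delta N(R)$, which is exactly the $\Delta NU$ condition, completing the proof.
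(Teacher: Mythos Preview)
Your proposal is correct and follows essentially the same approach as the paper's proof: both take a unit $u$, write $u=e+q$ as a strongly $\Delta$-nil-clean decomposition, observe that $u^{-1}$ commutes with $q$, invoke the definition of $\Delta N(R)$ to get $1-u^{-1}q\in U(R)$, and conclude that $e$ is a unit idempotent, hence $e=1$. Your write-up is in fact more careful than the paper's in spelling out why $u^{-1}$ commutes with $q$ and why the $\Delta$-nilpotency hypothesis applies with that particular unit.
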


\begin{proof}
Since \(1 + \Delta N(R) \subseteq U(R)\), it just remains to verify the opaque containment \(U(R) \subseteq 1 + \Delta N(R)\). To verify that, take an arbitrary unit \(u \in U(R)\). By hypothesis, we can write \(u = e + q\) for some idempotent \(e \in Id(R)\) and \(q \in \Delta N(R)\) such that \(eq = qe\). Multiplying both sides by \(u^{-1}\) on the left, we get \(u^{-1}e = 1 - u^{-1}q \in U(R)\) for some \(q \in \Delta N(R)\). This means that \(e\) is, simultaneously, a unit and an idempotent, ensuring automatically that \(e = 1\). Hence, \(u = 1 + q \in 1 + \Delta N(R)\), as asked.
\end{proof}

\begin{lemma}\label{11}
Any strongly $\Delta$-nil-clean ring is strongly clean.
\end{lemma}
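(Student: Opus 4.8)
The plan is to imitate the decomposition used in Lemma~\ref{cor2}, but since a strongly $\Delta$-nil-clean ring need not satisfy $2 \in J(R)$, I will extract the required unit directly from the defining property of $\Delta N(R)$ rather than from the Jacobson radical. Starting from an arbitrary $a \in R$, I would fix a strongly $\Delta$-nil-clean decomposition $a = e + q$ with $e^2 = e$, $q \in \Delta N(R)$ and $eq = qe$, and rewrite it as
\[
a = (1-e) + (2e - 1 + q).
\]
Here $1 - e$ is an idempotent, and because $e$ commutes with $q$ it also commutes with $2e - 1 + q$; hence it suffices to prove that $2e - 1 + q$ is a unit.

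The key observation is that $v := 2e - 1$ is a self-inverse unit, since $(2e-1)^2 = 4e^2 - 4e + 1 = 1$, and that $v$ commutes with $q$. Factoring out $v$ gives
\[
2e - 1 + q = v\,(1 + vq),
\]
so the whole matter reduces to showing $1 + vq \in U(R)$. To this end I would apply the definition of $\Delta$-nilpotency to $q$: taking the commuting unit $u := -v = 1 - 2e$ (an involution, hence a unit, that commutes with $q$ by $eq=qe$), the defining condition yields $1 - qu \in U(R)$; and since $qu = q(-v) = -vq$, this is precisely $1 + vq \in U(R)$. Consequently $2e - 1 + q = v(1 + vq)$ is a product of units, so it is a unit.

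Putting the pieces together, $a = (1-e) + (2e-1+q)$ exhibits $a$ as the sum of the idempotent $1-e$ and the unit $2e-1+q$, and these two commute by the remark above; thus $a$ is strongly clean and, $a$ being arbitrary, $R$ is strongly clean. I expect the only delicate point to be the correct invocation of the $\Delta N(R)$ definition: one must verify that the unit $u = 1 - 2e$ genuinely commutes with $q$ before concluding $1 - qu \in U(R)$, since the $\Delta$-nilpotent property guarantees invertibility of $1 - qu$ only for units $u$ that commute with $q$.
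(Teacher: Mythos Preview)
Your argument is correct. Both you and the paper start from the same rewriting $a=(1-e)+(2e-1+q)$ and must show $2e-1+q\in U(R)$, but you diverge at that point. The paper invokes an external result (\cite[Theorem~3.10]{tkq}) asserting that $2\in J(R)$ in any strongly $\Delta$-nil-clean ring, and then argues $2e-1+q\in J(R)+U(R)\subseteq U(R)$ (using that $-1+q=-(1-q\cdot 1)\in U(R)$ by the $\Delta N$ condition with the commuting unit $1$). You instead factor $2e-1+q=v(1+vq)$ with the involution $v=2e-1$ and feed the commuting unit $u=-v$ directly into the defining property of $\Delta N(R)$ to get $1+vq=1-qu\in U(R)$. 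Your route is fully self-contained and avoids the outside citation; the paper's route is shorter once that citation is in hand. One small correction to your preamble: your remark that ``a strongly $\Delta$-nil-clean ring need not satisfy $2\in J(R)$'' is in fact false---the paper asserts $2\in J(R)$ via \cite{tkq}---but this misstatement is only motivational and does not affect the validity of your proof.
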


\begin{proof}
Let $a \in R$ be strongly $\Delta$-nil-clean. Then, there exist $e^2 = e \in R$ and $q \in \Delta N(R)$ such that $a = e + q$ and $eq = qe$. Thus, we have $a = (1 - e) + (2e - 1 + q)$. With \cite[Theorem 3.10]{tkq} at hand, we have $2 \in J(R)$, so that $2e -1 + q \in J(R) + U(R) \subseteq U(R)$, as pursued.
\end{proof}

According to \cite[Lemma 2.1]{tkq}, the following inclusions hold:
\[
J(R) \subseteq QN(R) \subseteq \Delta N(R) \quad
\]
These nested relationships among different nilpotent-related subsets motivate the following unexpected result.

\begin{theorem}\label{main}
Let $R$ be a ring. Then the following are equivalent:
	
(1) $R$ is a strongly $J$-clean ring.
	
(2) $R$ is a strongly quasi-nil-clean ring.
	
(3) $R$ is a strongly $\Delta$-nil-clean ring.
\end{theorem}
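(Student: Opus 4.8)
The plan is to establish the cycle $(1)\Rightarrow(2)\Rightarrow(3)\Rightarrow(1)$, exploiting the inclusion chain $J(R)\subseteq QN(R)\subseteq\Delta N(R)$ recorded just before the statement. The implications $(1)\Rightarrow(2)$ and $(2)\Rightarrow(3)$ should be essentially formal: any strongly $J$-clean decomposition $a=e+j$ with $j\in J(R)$ is automatically a strongly quasi-nil-clean decomposition (since $J(R)\subseteq QN(R)$), and likewise any strongly quasi-nil-clean decomposition is strongly $\Delta$-nil-clean (since $QN(R)\subseteq\Delta N(R)$). In each case the idempotent and the commuting requirement carry over unchanged, so these two steps cost nothing beyond citing the inclusions.

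The substance of the theorem lives in the closing implication $(3)\Rightarrow(1)$, which must collapse the a priori larger class back down to strongly $J$-clean. Here I would follow the template already used in the paper for strongly $J^{\#}$-clean rings. First, Lemma~\ref{111} gives that a strongly $\Delta$-nil-clean ring is $\Delta NU$, i.e. $U(R)=1+\Delta N(R)$, and Lemma~\ref{11} gives that it is strongly clean; moreover \cite[Theorem 3.10]{tkq} supplies $2\in J(R)$. The crucial structural input is to pin down $R/J(R)$: I expect that the $\Delta NU$ condition together with $2\in J(R)$ forces $R/J(R)$ to be Boolean, by the same mechanism as in Lemma~\ref{333} and Proposition~\ref{booli} (namely, every unit of $R/J(R)$ is unipotent, so via \cite[Theorem 4.3]{DL} the quotient is Boolean). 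Once $R/J(R)$ is Boolean, every $\Delta$-nilpotent element should become nilpotent, indeed should lie in $J(R)$, modulo the radical.

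The heart of the argument is therefore to show $\Delta N(R)\subseteq J(R)$ under these hypotheses, which upgrades every strongly $\Delta$-nil-clean decomposition into a strongly $J$-clean one. I would argue as follows: take $q\in\Delta N(R)$ and consider its image $\bar q$ in the Boolean ring $R/J(R)$; since every element of a Boolean ring is idempotent and central, and since the defining invertibility property of $\Delta$-nilpotence is inherited by the homomorphic image, $\bar q$ must be a $\Delta$-nilpotent idempotent, forcing $\bar q=0$ and hence $q\in J(R)$. Concretely, if $\bar q\neq0$ were a nonzero idempotent, then $1-\bar q\cdot\bar q=1-\bar q$ would fail to be a unit in $R/J(R)$ while $\bar q$ commutes with the unit $\bar q\cdot\bar q^{-1}$-type witnesses, contradicting $\Delta$-nilpotence lifted to the quotient. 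With $\Delta N(R)\subseteq J(R)$ in hand, any strongly $\Delta$-nil-clean expression $a=e+q$ has $q\in J(R)$ with $eq=qe$, which is exactly a strongly $J$-clean decomposition, closing the cycle.

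The main obstacle I anticipate is the verification that $\Delta$-nilpotence descends cleanly to the quotient $R/J(R)$ and there collapses to $0$: the defining condition of $\Delta N(R)$ quantifies over commuting units of $R$, and one must check that this survives passage to $R/J(R)$ (units lift modulo $J(R)$, and the commuting condition is preserved), so that a $\Delta$-nilpotent element of $R$ maps to a $\Delta$-nilpotent element of the Boolean quotient. Granting that, the Boolean structure does the rest immediately, since the only $\Delta$-nilpotent element of a Boolean ring is $0$. I would lean on the already-established $\Delta NU$ and strongly clean properties plus \cite[Theorem 4.3]{DL} to keep this self-contained rather than re-deriving the Booleanness of the quotient from scratch.
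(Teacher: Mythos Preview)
Your overall plan matches the paper's exactly: the forward implications are immediate from the inclusion chain, and for $(3)\Rightarrow(1)$ you correctly invoke Lemmas~\ref{111} and~\ref{11} to get that $R$ is $\Delta NU$ and strongly clean, then aim to show $R/J(R)$ is Boolean, then conclude. The paper closes the last two steps by citing \cite[Corollary~4.14]{tkq} for the Boolean quotient and \cite[Theorem~2.3]{csj} for the final implication.

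The genuine gap in your proposal is the Boolean step. You propose to use ``the same mechanism as in Lemma~\ref{333}'', but that proof rests on the definitional fact that $j\in J^{\#}(R)$ gives $j^n\in J(R)$ for some $n$, so $\bar j$ is automatically nilpotent in $R/J(R)$. For $q\in\Delta N(R)$ no such power condition is available: the definition only says $1-qu\in U(R)$ for commuting units $u$, which does not by itself force $\bar q$ to be nilpotent modulo $J(R)$. So the assertion ``every unit of $R/J(R)$ is unipotent'' is unjustified, and this is exactly where the paper defers to \cite[Corollary~4.14]{tkq} rather than to \cite{DL}. The obstacle you anticipate (descent of $\Delta$-nilpotence to the quotient) is real, but it bites already at the Boolean step, not merely afterward; your sketch does not resolve it.

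Your proposed replacement for the citation of \cite[Theorem~2.3]{csj}---showing $\Delta N(R)\subseteq J(R)$ once $R/J(R)$ is known to be Boolean---is correct and slightly more direct than the paper's route. The clean version of the argument is: in a Boolean ring the only unit is $1$; since $q\in\Delta N(R)$ and $1\in U(R)$ commutes with $q$, we have $1-q\in U(R)$, hence $\overline{1-q}$ is a unit in $R/J(R)$, hence equals $1$, so $\bar q=0$. (Your remark about ``$\bar q\cdot\bar q^{-1}$-type witnesses'' is off, since a nonzero idempotent in a Boolean ring is never a unit, but the simple argument just given suffices.)
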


\begin{proof}
Implications \( (1) \Rightarrow (2) \Rightarrow (3) \) follow directly from the chain of containments $J(R)\subseteq QN(R)\subseteq \Delta N(R)$, which obviously justifies the transition between the conditions.
	
$(3) \Rightarrow (1)$. A subsequent application of Lemmas \ref{11} and \ref{111} leads to this that the ring \( R \) is strongly clean and satisfies the \( \Delta NU \) condition. Consequently, \( R/J(R) \) is Boolean thanks to \cite[Corollary 4.14]{tkq}. It then follows from \cite[Theorem 2.3]{csj} that \( R \) is strongly \( J \)-clean, as wanted.
\end{proof}

As a consequence, we extract:

\begin{corollary}\label{8}
Let $R$ be a ring. Then, the following two claims are equivalent:
	
(1) $R$ is a uniquely clean ring.
	
(2) $R$ is an abelian $\Delta$-nil-clean ring.	
\end{corollary}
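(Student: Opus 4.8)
The plan is to prove both implications by reducing them, via the abelian hypothesis, to results already established: Theorem~\ref{main} (which equates strong $J$-cleanness with strong $\Delta$-nil-cleanness) and Theorem~\ref{4} (which equates abelian $J^{\#}$-cleanness with unique cleanness). The pivotal elementary remark is that \emph{in an abelian ring every idempotent is central}, so that any $\Delta$-nil-clean decomposition $a=e+q$ (or $J^{\#}$-clean decomposition $a=e+j$) automatically satisfies $eq=qe$ (resp.\ $ej=je$); hence for abelian rings the plain and the ``strongly'' versions of these notions coincide. This remark is exactly what converts the hypotheses of the corollary, stated without a commutativity requirement, into the ``strongly'' form demanded by Theorem~\ref{main}.

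For $(2)\Rightarrow(1)$, I would start from an abelian $\Delta$-nil-clean ring $R$; by the remark above $R$ is in fact strongly $\Delta$-nil-clean, so Theorem~\ref{main} makes $R$ strongly $J$-clean. The earlier proposition characterizing strong $J$-cleanness then gives that $R$ is strongly $J^{\#}$-clean, and in particular $J^{\#}$-clean. Since $R$ is abelian, the equivalence $(1)\Leftrightarrow(3)$ of Theorem~\ref{4} yields that $R$ is uniquely clean.

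For $(1)\Rightarrow(2)$, I would begin with a uniquely clean ring $R$. By Theorem~\ref{4} it is abelian and $J^{\#}$-clean, hence (again by the remark) strongly $J^{\#}$-clean; Lemma~\ref{cor2} and Proposition~\ref{booli} then show that $R$ is strongly clean with $R/J(R)$ Boolean, so $R$ is strongly $J$-clean by \cite[Theorem 2.3]{csj}. Applying Theorem~\ref{main} upgrades this to strongly $\Delta$-nil-clean, which is in particular $\Delta$-nil-clean; together with abelianness this is exactly condition~(2).

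The routine content is all contained in the cited results; the one point that needs care is the repeated use of the centrality-of-idempotents remark, and making sure that each ``strongly'' notion is invoked only where a commuting decomposition is genuinely available, so that no direction covertly requires more than Theorems~\ref{main} and~\ref{4} (together with the internal Lemma~\ref{cor2}, Proposition~\ref{booli}, and \cite[Theorem 2.3]{csj}) actually provide. I expect this bookkeeping, rather than any hard computation, to be the only real obstacle.
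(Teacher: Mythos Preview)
Your proof is correct, and for $(2)\Rightarrow(1)$ it is close to the paper's: both pass through Theorem~\ref{main} to reach strong $J$-cleanness, after which the paper invokes \cite[Corollary~2.4]{csj} directly, while you detour through strong $J^{\#}$-cleanness and Theorem~\ref{4}. The genuine difference is in $(1)\Rightarrow(2)$. The paper's argument is essentially two lines: it cites \cite[Lemma~4]{nzu} for abelianness and \cite[Theorem~20]{nzu} to obtain, for every $a\in R$, a decomposition $a=e+j$ with $j\in J(R)\subseteq\Delta N(R)$, which is already $\Delta$-nil-cleanness. Your route instead threads the long chain Theorem~\ref{4} $\to$ strongly $J^{\#}$-clean $\to$ (Lemma~\ref{cor2}, Proposition~\ref{booli}, \cite[Theorem~2.3]{csj}) strongly $J$-clean $\to$ (Theorem~\ref{main}) strongly $\Delta$-nil-clean. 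This is valid and has the minor virtue of staying inside the paper's own machinery, but it is considerably more circuitous than the paper's direct appeal to the Nicholson--Zhou structure theorem for uniquely clean rings.
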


\begin{proof}
$(1)\Rightarrow(2)$. Assume that \( R \) is a uniquely clean ring. Appealing to \cite[Lemma 4]{nzu}, each idempotent in \( R \) is central, assuring that \( R \) is abelian. Furthermore, \cite[Theorem 20]{nzu} can be applied to get that, for every \( a \in R \), there exists a unique idempotent \( e \) such that \( a - e \in J(R) \subseteq \Delta N(R) \). Consequently, there exists some \( q \in \Delta(R) \) such that \( a = e + q \). This confirms that \( R \) is a $\Delta$-nil-clean ring.
	
$(2)\Rightarrow(1)$. Since \( R \) is an abelian $\Delta$-nil-clean ring, it follows that \( R \) is strongly $\Delta$-nil-clean. A consultation with Theorem~\ref{main} discovers that \( R \) is strongly \( J \)-clean. Consequently, \( R \) is uniquely clean with the help of \cite[Corollary 2.4]{csj}, as desired.
\end{proof}

Years ago, Anderson and Camillo posed the logical question of whether or not a homomorphic image of a uniquely clean ring remains uniquely clean. This question was affirmatively answered by Nicholson and Zhou (see \cite[Theorem 22]{nzu}), and later on, an alternative approach was provided by Chen (see \cite[Corollary 2.4]{csj}). In what follows, we offer a new result in this aspect.

\medskip

Precisely, we now derive the following consequence.

\begin{corollary}
Let $R$ be a ring. Then, the following six conditions are equivalent:
	
(1) $R$ is a abelian $J$-clean ring.
	
(2) $R$ is a abelian $J^{\#}$-clean ring.
	
(3) $R$ is a abelian quasi-nil-clean ring.
	
(4) $R$ is a abelian $\Delta$-nil-clean ring.	
	
(5) $R$ is an uniquely clean ring.
	
(6) Every homomorphic image of $R$ is a uniquely clean ring.
\end{corollary}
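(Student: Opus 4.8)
The plan is to make condition (5), unique cleanness, the hub through which all the other conditions connect, relying on the three structural theorems already in hand---Theorem~\ref{4}, Corollary~\ref{8}, and Theorem~\ref{main}---together with the Nicholson--Zhou closure theorem \cite[Theorem 22]{nzu}.

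First I would isolate the elementary observation that, under the abelian hypothesis, each of the decompositions named in (1)--(4) is automatically a \emph{strongly} one. Indeed, if $a = e + z$ with $e^2 = e$ and $z$ lying in $J(R)$, $J^{\#}(R)$, $QN(R)$, or $\Delta N(R)$, then the centrality of the idempotent $e$ yields $ez = ze$ for free. Consequently, for each of these four notions, ``abelian $X$-clean'' and ``abelian strongly $X$-clean'' describe exactly the same class of rings, and this is the reduction that lets the ``strongly'' theorems do their work.

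With this recorded, the equivalences among (1)--(5) follow by assembling prior results. Theorem~\ref{4} identifies abelian $J^{\#}$-cleanness with unique cleanness, which is precisely (2) $\Leftrightarrow$ (5). Corollary~\ref{8} states that unique cleanness is equivalent to abelian $\Delta$-nil-cleanness, giving (4) $\Leftrightarrow$ (5). Finally, the reduction turns (1) and (3) into abelian strong $J$-cleanness and abelian strong quasi-nil-cleanness; since Theorem~\ref{main} proves that strong $J$-cleanness, strong quasi-nil-cleanness, and strong $\Delta$-nil-cleanness coincide for \emph{every} ring, intersecting these coincidences with the abelian hypothesis forces (1) $\Leftrightarrow$ (3) $\Leftrightarrow$ (4). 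Thus (1)--(5) are mutually equivalent, all pinned to (5).

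It remains only to attach (6). The implication (6) $\Rightarrow$ (5) is immediate, since $R$ is a homomorphic image of itself. For the converse (5) $\Rightarrow$ (6), I would invoke \cite[Theorem 22]{nzu}, the affirmative answer of Nicholson and Zhou to the Anderson--Camillo question, which asserts precisely that every homomorphic image of a uniquely clean ring is again uniquely clean; this closes the loop. I expect no serious obstacle here, as each link is a direct appeal to an earlier result. The single point warranting care is confirming that the abelian hypothesis transports faithfully across the ring-level equivalences of Theorem~\ref{main}; this is automatic, because abelianness is an intrinsic property of $R$ and is unaffected by how its clean decompositions are rewritten, but it is the spot where a careless argument might silently drop the abelian assumption.
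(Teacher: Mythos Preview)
Your proposal is correct and follows essentially the same hub-and-spoke strategy as the paper, pinning everything to (5) via Theorem~\ref{4} for (2)$\Leftrightarrow$(5) and Corollary~\ref{8} for (4)$\Leftrightarrow$(5). The only cosmetic differences are that the paper bundles (1)$\Leftrightarrow$(5)$\Leftrightarrow$(6) into a single citation of \cite[Corollary~2.4]{csj} (rather than using Theorem~\ref{main} and \cite[Theorem~22]{nzu} separately), and handles (3) by the bare inclusions $J(R)\subseteq QN(R)\subseteq\Delta N(R)$ to sandwich it between (1) and (4), whereas you invoke Theorem~\ref{main} after the abelian-implies-strongly reduction.
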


\begin{proof}
The equivalence of conditions (1), (5) and (6) is established in \cite[Corollary 2.4]{csj}. In addition, conditions (4) and (5) are, evidently, equivalent looking at Corollary~\ref{8}, and the equivalence of (2) and (5) is ensured by Theorem~\ref{4}. Finally, knowing the validity of inclusions \( J(R) \subseteq QN(R) \subseteq \Delta N(R) \), the proof is over.
\end{proof}

We have previously illustrated in Proposition \ref{jj} that a ring \( R \) is strongly \( J \)-clean exactly when it is strongly \( J^{\#} \)-clean and satisfies the condition \( J(R) = J^{\#}(R) \). Remarkably, we now demonstrate that this characterization remains valid even in the absence of the equality \( J(R) = J^{\#}(R) \), thereby revealing a deeper structural alignment between strongly \( J \)-clean and strongly \( J^{\#} \)-clean rings showing their curious coincident.

\medskip

Surprisingly, we are able to prove the following equivalence.

\begin{theorem}\label{j}
Let $R$ be a ring. Then, the following two statements are equivalent:
	
(1) $R$ is a strongly $J$-clean ring.
	
(2) $R$ is a strongly $J^{\#}$-clean ring
\end{theorem}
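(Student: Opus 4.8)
The plan is to treat the two implications separately, with the forward direction being essentially immediate and the reverse direction reducing to a characterization of strongly $J$-clean rings that has already been put to use earlier in this paper.

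For $(1) \Rightarrow (2)$, I would simply observe that $J(R) \subseteq J^{\#}(R)$ holds by the very definition of $J^{\#}(R)$ (every $j \in J(R)$ satisfies $j^{1} \in J(R)$). Hence any strongly $J$-clean decomposition $a = e + j$ with $e^2 = e$, $j \in J(R)$ and $ej = je$ is at the same time a valid strongly $J^{\#}$-clean decomposition. Thus every strongly $J$-clean ring is automatically strongly $J^{\#}$-clean, and this direction costs nothing.

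For the substantive direction $(2) \Rightarrow (1)$, my strategy is to recover the two structural invariants that characterize strong $J$-cleanness and then appeal to Chen's theorem. First, Lemma~\ref{cor2} guarantees that a strongly $J^{\#}$-clean ring $R$ is strongly clean. Second, Proposition~\ref{booli} ensures that the factor ring $R/J(R)$ is Boolean. Armed with these two properties, I would invoke \cite[Theorem 2.3]{csj}, which asserts that a ring is strongly $J$-clean exactly when it is strongly clean and its Jacobson factor $R/J(R)$ is Boolean; this delivers at once the conclusion that $R$ is strongly $J$-clean. Notice that this is precisely the path already travelled in the implication $(3) \Rightarrow (1)$ of Theorem~\ref{main}, where strong cleanness together with Booleanness of $R/J(R)$ was converted into strong $J$-cleanness via the same external result.

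I anticipate no genuine obstacle: the full weight of the argument is carried by Proposition~\ref{booli}, whose own proof already rested on Corollary~\ref{33}, Lemma~\ref{333}, and the cited result \cite[Theorem 4.3]{DL}. The single point demanding care is to confirm that the invoked form of \cite[Theorem 2.3]{csj} really produces strong $J$-cleanness from the conjunction of strong cleanness and the Booleanness of $R/J(R)$, and not from some formally stronger hypothesis. Since this is exactly the implication relied upon earlier for Theorem~\ref{main}, the application is legitimate, and the coincidence of the two classes follows. The striking feature of the statement is that, although $J^{\#}(R)$ may strictly contain $J(R)$ and need not even be closed under the ring operations, enlarging $J(R)$ to $J^{\#}(R)$ in the clean decomposition produces no new rings whatsoever.
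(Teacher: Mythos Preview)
Your proof is correct and follows essentially the same route as the paper's own argument: both directions match, with $(1)\Rightarrow(2)$ immediate from $J(R)\subseteq J^{\#}(R)$ and $(2)\Rightarrow(1)$ obtained by invoking Proposition~\ref{booli} together with \cite[Theorem~2.3]{csj}. If anything, your version is slightly more explicit than the paper's, since you separately record strong cleanness via Lemma~\ref{cor2} before applying Chen's characterization, whereas the paper leaves that hypothesis implicit.
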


\begin{proof}
Implication \( (1) \Rightarrow (2) \) is immediate, because we always have $J(R)\subseteq J^{\#}(R)$.
	
Implication $(2) \Rightarrow (1)$ follows at once from Proposition \ref{booli}, claiming that $R / J(R)$ is Boolean, and combining this with \cite[Theorem 2.3]{csj} guaranteeing that $R$ is, in fact, strongly $J$-clean, as promised.
\end{proof}

%\noindent{\bf Acknowledgement.} The authors express their sincere gratitude to the expert referee for the numerous competent suggestions made which lead to a substantial improvement of the exposition.

\medskip

\noindent{\bf Funding:} The work of the first-named author, P.V. Danchev, is partially supported by the project Junta de Andaluc\'ia under Grant FQM 264.

\vskip4.0pc

\end{document}